\newcommand{\GL}{\operatorname{GL}_2}
\newcommand{\PG}{\operatorname{PGL}_2}
\newcommand{\F}{\mathbb{F}}
\newcommand{\NR}{\mathcal{NR}}
\theoremstyle{definition}
\newtheorem{definition}{Definition}
\theoremstyle{remark}
\newtheorem{remark}[definition]{Remark}
\newtheorem{example}[definition]{Example}
\theoremstyle{plain}
\newtheorem{lemma}[definition]{Lemma}
\newtheorem{corollary}[definition]{Corollary}
\newtheorem{theorem}[definition]{Theorem}
\newtheorem*{theo}{Theorem}
\newtheorem*{lem}{Lemma}
\newtheorem*{theoR}{Theorem R}
\newtheorem*{maintheo}{Main Theorem}
\newcommand{\K}{\overline{K}\cup\{\infty\}}
\begin{document}
\title{Rational Transformations and Invariant Polynomials}
	
	\author{Max Schulz\\
		{University of Rostock}, Germany\\
		\tt {max.schulz@uni-rostock.de}}

\maketitle
\begin{abstract}
    Rational transformations of polynomials are extensively studied in the context of finite fields, especially for the construction of irreducible polynomials. In this paper, we consider the factorization of rational transformations with (normalized) generators of the field $K(x)^G$ of $G$-invariant rational functions for $G$ a finite subgroup of $\PG(K)$, where $K$ is an arbitrary field. Our main theorem shows that the factorization is related to a well-known group action of $G$ on a subset of monic polynomials. With this, we are able to extend a result by Lucas Reis for $G$-invariant irreducible polynomials. Additionally, some new results about the number of irreducible factors of rational transformations for $Q$ a generator of $\F_q(x)^G$ are given when $G$ is non-cyclic.
\end{abstract}
\section*{Introduction}
Let $K$ be an arbitrary field, $K^{\ast}=K\setminus\{0\}$ the set of its units, $K[x]$ the set of polynomials with coefficients in $K$ and $\mathcal{I}_K$ the set of monic irreducible polynomials in $K[x]$, $K(x)$ the rational function field over $K$ and $\F_q$ the field with $q$ elements. For a rational function $Q(x)\in K(x)$ we always denote its numerator and denominator as $g$ and $h$, i.e. $Q(x)=g(x)/h(x)$. Furthermore, we assume that rational functions are represented as reduced fractions, so $\gcd(g,h)=1$. Recall that the degree of $Q$ is $\deg(Q)=\max\{\deg(g),\deg(h)\}$. The $Q$-transform of a polynomial $F(x)= \sum_{i=0}^ka_ix^i\in K[x]$ is defined as $$F^{Q}(x):=h(x)^{\deg(F)}F\left(\frac{g(x)}{h(x)}\right)=\sum\limits_{i=0}^ka_ig(x)^i h(x)^{k-i}.$$ This is not yet well-defined since for all $a\in K^{\ast}$ we have $$Q(x)=\frac{a\cdot g(x)}{a\cdot h(x)}$$ which leads to $$F^{Q}(x)=\sum\limits_{i=0}^ka_i(a g(x))^i (ah(x))^{k-i}=a^k\cdot \sum\limits_{i=0}^ka_ig(x)^i h(x)^{k-i}.$$ One might make this transformation unambiguous by normalizing either the numerator $g$ of $Q$ or the resulting polynomial $F^Q$. In our setup we most often have that $Q$ satisfies $\deg(g)>\deg(h)$ and if $F,g$ are monic so is $F^Q$.

The transformation $F^{Q}$ is often used for constructing irreducible polynomials of high degree over finite fields starting with an irreducible polynomial and a rational function. There is a rich literature on this topic, for example \cite{abkyu}, \cite{bassaR}, \cite{cohenConst}, \cite{KK}, \cite{meyn} and \cite{reisConstr}. The main criterion in use is \begin{lem}[{\cite[Lemma 1]{cohensLem}}] Let $Q(x)=g(x)/h(x)\in K(x)$ and $F\in K[x]$. Then $F^{Q}$ is irreducible if and only if $F\in K[x]$ is irreducible and $g(x)-\alpha h(x)$ is irreducible over $K(\alpha)[x]$, where $\alpha$ is a root of $F$.
\end{lem}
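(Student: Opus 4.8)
The plan is to reduce everything to a tower-of-fields computation by exploiting the factorization of $F^Q$ over a splitting field of $F$. Writing $F(y)=a_k\prod_{j=1}^k (y-\alpha_j)$ over $\overline{K}$ with $k=\deg F$, one checks directly from the definition that
$$F^Q(x)=h(x)^k\,a_k\prod_{j=1}^k\Big(\frac{g(x)}{h(x)}-\alpha_j\Big)=a_k\prod_{j=1}^k\big(g(x)-\alpha_j h(x)\big),$$
so the roots of $F^Q$ are exactly those $\beta$ with $Q(\beta)$ a root of $F$. Two elementary facts will be recorded first: the transform is multiplicative, $(F_1F_2)^Q=F_1^Q F_2^Q$, directly from $h^{\deg F_1+\deg F_2}(F_1F_2)(g/h)=[h^{\deg F_1}F_1(g/h)][h^{\deg F_2}F_2(g/h)]$; and, under the standing assumption $\deg g>\deg h$, the degree behaves cleanly, $\deg F^Q=\deg F\cdot\deg Q$, since in $\sum_i a_i g^i h^{k-i}$ the summands have pairwise distinct degrees and the top term $a_k g^k$ survives. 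I will also note that any root $\beta$ of $g-\alpha h$ satisfies $h(\beta)\neq 0$: otherwise $g(\beta)=\alpha h(\beta)=0$ would contradict $\gcd(g,h)=1$, so $Q(\beta)=\alpha$ is genuinely defined.

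For the direction $(\Leftarrow)$, assume $F$ is irreducible over $K$ with root $\alpha$ and that $g(T)-\alpha h(T)$ is irreducible over $K(\alpha)$. Pick a root $\beta$ of $g-\alpha h$; then $Q(\beta)=\alpha\in K(\beta)$, whence $K(\beta)=K(\alpha)(\beta)$. The tower law gives $[K(\beta):K]=[K(\alpha)(\beta):K(\alpha)]\,[K(\alpha):K]=\deg(g-\alpha h)\cdot k=\deg Q\cdot\deg F=\deg F^Q$. Since $\beta$ is a root of $F^Q$ and its minimal polynomial over $K$ already has degree $\deg F^Q$, that minimal polynomial must equal $F^Q$ up to a scalar, so $F^Q$ is irreducible.

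For $(\Rightarrow)$, assume $F^Q$ is irreducible. First, irreducibility of $F$ follows from multiplicativity: a nontrivial factorization $F=F_1F_2$ with $\deg F_i\ge 1$ would yield $F^Q=F_1^Q F_2^Q$ with both factors of positive degree, contradicting irreducibility. Now fix a root $\beta$ of $F^Q$ and set $\alpha:=Q(\beta)$, a root of $F$; then $[K(\beta):K]=\deg F^Q=\deg Q\cdot\deg F$ and $[K(\alpha):K]=\deg F=k$, so the tower law forces $[K(\beta):K(\alpha)]=\deg Q$. But $\beta$ is a root of $g(T)-\alpha h(T)\in K(\alpha)[T]$, a polynomial of degree $\deg Q$; a polynomial that vanishes at an element generating an extension of exactly its own degree must be irreducible (up to a scalar). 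Hence $g-\alpha h$ is irreducible over $K(\alpha)$, completing the proof.

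The step requiring the most care is the degree bookkeeping, i.e. the identity $\deg F^Q=\deg Q\cdot\deg F$, on which both tower-law computations hinge. This is automatic under the paper's standing hypothesis $\deg g>\deg h$, but in the borderline case $\deg g=\deg h=n$ the top coefficient of $F^Q$ equals, up to a nonzero constant, $F(c/d)$, where $c,d$ are the leading coefficients of $g,h$; a degree drop occurs precisely when $c/d$ is a root of $F$, and the argument would then need $\deg(g-\alpha h)$ to be tracked for the individual root $\alpha=c/d$. Keeping $\deg g>\deg h$ sidesteps this and makes $\deg(g-\alpha h)=\deg Q$ hold uniformly in $\alpha$.
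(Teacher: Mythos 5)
Your argument is correct, and it is the standard tower-of-fields proof of this lemma: the paper itself gives no proof here, simply citing Cohen and remarking that the finite-field argument carries over verbatim to arbitrary $K$, which is exactly the computation you carry out. Your closing caveat about the case $\deg(g)=\deg(h)$ is well placed, since the identity $\deg(F^Q)=\deg(F)\cdot\deg(Q)$ (equivalently $F(Q(\infty))\neq 0$, in the paper's later notation) is precisely the hypothesis under which both of your tower-law computations are valid.
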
 The original version is only stated for finite fields, but the proof does work for arbitrary fields as well. The concrete application of this lemma for arbitrary rational functions and starting polynomials $F$ is very hard, which is why the best one can do is to focus on specific rational functions or ''small'' families of rational functions.

This paper considers two specific $Q$-transformations: The first is $Q$ being a rational function of degree 1, i.e. $$Q(x)=\frac{ax+b}{cx+d}$$ where $ad-bc\neq 0$. The $Q$-transform of $F$ looks like this $$ F^Q(x)=\lambda_{Q,F} (cx+d)^{\deg(F)}F\left(\frac{ax+b}{cx+d}\right),$$ where $\lambda_{Q,F}\in K^{\ast}$ makes the resulting polynomial monic. This transformation preserves the irreducibility and degree of $F$ if $\deg(F)\ge 2$ by the previous lemma. There is another way to interpret this particular $Q$-transformation: Let $\GL(K)$ be the set of invertible $2\times 2$-matrices over $K$ and let \begin{equation}\label{Amat}
    A=\left(\begin{array}{cc}
     a& b\\
     c& d
     \end{array}\right)\in\GL(K). 
\end{equation} We make the convention that if we write $A\in\GL(K)$ then we assume that $A$ is of the form (\ref{Amat}). We consider the \textit{projective general linear group} $\PG(K)=\GL(K)/Z$ over $K$, where $Z=K^{\ast} I_2$ is the set of invertible scalar multiples of the identity matrix $I_2$, which is the center of $\GL(K)$. The group $\PG(K)$ is isomorphic to the set of degree 1 rational functions in $K(x)$, where the multiplication is composition. We denote by $[A]$ the coset of $A$ in $\PG(K)$, that is, $$[A]:=\{\alpha\cdot A|\alpha\in K^{\ast}\}.$$ 
 We define $\ast :\PG(K)\times K[x]\to K[x]$ by \begin{equation}\label{asttrans}
     [A]\ast f(x):=\lambda_{A,f}\cdot (cx+d)^{\deg(f)}f\left(\frac{ax+b}{cx+d}\right),
 \end{equation} where $\lambda_{A,f}\in K^{\ast}$ makes the output-polynomial monic.
We call $f\in K[x]$ \textit{$[A]$-invariant} for an $[A]\in\PG(K)$ if $[A]\ast f(x)=f(x)$. Moreover $f$ is called \textit{$G$-invariant} for a subgroup $G\le \PG(K)$ if it is $[A]$-invariant for all $[A]\in G$.
    It can be shown that an $[A]$-invariant polynomial is also $\langle[A]\rangle$-invariant. There is a substantial amount of literature on this transformation and its variations in the context of finite fields, for example \cite{GarefalakisGL}, \cite{reisphd}, \cite{ReisFQG}, \cite{reisEx}, \cite{Sidel}, \cite{sticht}. For instance, it is shown that this transformation induces a (right) group action of $\PG(K)$ on the set of monic polynomials with no roots in $K$. 
The following theorem shows that $[A]$-invariant irreducible monic polynomials over finite fields are always $Q_{A}$-transformations for specific rational functions $Q_{A}$ depending on $A\in\GL(\F_q)$: 
\begin{theoR}[{\cite[Theorem 6.0.7.]{reisphd}}]
Let $[A]\in\PG(\F_q)$ be an element of order $D=\operatorname{ord}([A])$. Then there exists a rational function $Q_A(x)=g_A(x)/h_A(x)$ of degree $D$ with the property that the $[A]$-invariant monic irreducible polynomials of degree $Dm>2$ are exactly the monic irreducible polynomials of the form \begin{equation*}
    F^{Q_A}(x)=h_A(x)^{m}\cdot F\left(\frac{g_A(x)}{h_A(x)}\right),
\end{equation*} where $\deg(F)=m$. In addition, $Q_A$ can be explicitly computed from $A$.
\end{theoR}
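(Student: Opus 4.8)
The plan is to realize $Q_A$ as a generator of the fixed field of $\langle[A]\rangle$ and then translate the statement into a computation with commuting Frobenius and Möbius actions. Write $\sigma(x)=(ax+b)/(cx+d)$ for the Möbius transformation attached to $[A]$; substitution $x\mapsto\sigma(x)$ makes $\langle[A]\rangle$ act faithfully on $\F_q(x)$ by $\F_q$-automorphisms. By Artin's theorem the fixed field satisfies $[\F_q(x):\F_q(x)^{\langle[A]\rangle}]=D$, and by Lüroth's theorem it is rational, say $\F_q(x)^{\langle[A]\rangle}=\F_q(u)$; any reduced representative $Q_A=g_A/h_A$ of $u$ then has $\deg(Q_A)=D$, and one may normalize so that $\deg(g_A)=D>\deg(h_A)$. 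Explicitly, $Q_A$ can be extracted from the coefficients of $\prod_{i=0}^{D-1}(T-\sigma^i(x))\in\F_q(x)^{\langle[A]\rangle}[T]$, which accounts for the ``can be explicitly computed from $A$'' clause, and the extension $\F_q(x)/\F_q(Q_A)$ is Galois with group $\langle\sigma\rangle\cong\mathbb{Z}/D\mathbb{Z}$.

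For the inclusion that every irreducible $F^{Q_A}$ with $\deg(F)=m$ is $[A]$-invariant, I would exploit that $Q_A\circ\sigma=Q_A$. Applying the definition (\ref{asttrans}) and expanding $[A]\ast F^{Q_A}=\lambda\,(cx+d)^{Dm}F^{Q_A}(\sigma(x))$, the factor $F(Q_A(\sigma(x)))=F(Q_A(x))$ is unchanged, while the Möbius substitution applied to $h_A(x)^{m}$ together with the prefactor $(cx+d)^{Dm}$ recombines into a scalar multiple of $F^{Q_A}(x)$; monicity of $[A]\ast F^{Q_A}$ forces the scalar to be $1$, so $[A]\ast F^{Q_A}=F^{Q_A}$. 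Since an irreducible polynomial of degree $Dm>1$ has no root in $\F_q$, the action is defined. This is the routine direction.

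The substantive direction is the converse. Let $P$ be an $[A]$-invariant monic irreducible polynomial of degree $Dm>2$, fix a root $\alpha\in\overline{\F_q}$, and let $\phi$ denote the $q$-power Frobenius. Because $A$ has entries in $\F_q$, the maps $\sigma$ and $\phi$ commute on $\overline{\F_q}\cup\{\infty\}$, and invariance of $P$ means $\sigma$ permutes its root set $R=\{\phi^k(\alpha)\}$. The first key point is that the $\langle\sigma\rangle$-orbit of $\alpha$ has size exactly $D$: if it were smaller, then $\alpha$, hence (applying $\phi$ and using commutativity) every element of $R$, would be fixed by some $\sigma^e\neq\id$, forcing $|R|\le 2$ because a non-identity Möbius transformation has at most two fixed points — contradicting $Dm>2$, which is exactly where the degree hypothesis enters. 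Setting $\beta=Q_A(\alpha)$ and using that $Q_A$ is constant on $\langle\sigma\rangle$-orbits, the $D$ distinct elements $\sigma^i(\alpha)$ all lie over $\beta$, and since $\deg(Q_A)=D$ they constitute the entire fibre $Q_A^{-1}(\beta)$.

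It remains to compute $[\F_q(\beta):\F_q]$. As $Q_A$ has coefficients in $\F_q$ one has $\phi^k(\beta)=Q_A(\phi^k(\alpha))$, so $\phi^k$ fixes $\beta$ exactly when $\phi^k(\alpha)$ lands in $\{\sigma^i(\alpha)\}$; writing the $\sigma$-action on $R$ as a power of $\phi$ and using that the orbit has full size $D$, a short cyclic computation in $\mathbb{Z}/Dm\mathbb{Z}$ yields that the smallest such positive $k$ is $m$, i.e. $\deg(F)=m$ for $F$ the minimal polynomial of $\beta$. Finally, $Q_A(\alpha)=\beta$ shows $\alpha$ is a root of $F^{Q_A}$, so the irreducible $P$ divides $F^{Q_A}$; comparing degrees, $\deg(F^{Q_A})=D\deg(F)=Dm=\deg(P)$, and monicity gives $P=F^{Q_A}$. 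Irreducibility of $F^{Q_A}$ is then automatic, and via the Lemma above it is equivalent to $g_A-\beta h_A$ being irreducible over $\F_q(\beta)$, matching the fibre description. I expect the main obstacle to be precisely this orbit-size and degree bookkeeping in the converse — certifying that the Frobenius orbit of $\beta$ has length exactly $m$ and disposing of the small cases excluded by $Dm>2$.
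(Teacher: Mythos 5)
Your argument is essentially correct, but it is a genuinely different route from both the original proof and the one this paper takes, so a comparison is in order. Reis's own proof (which the paper only cites) splits $\PG(\F_q)$ into four types of conjugacy classes and verifies the claim for a representative of each; the paper instead derives a generalization of Theorem R (Theorem \ref{main1}, Theorem \ref{k=1}, Corollary \ref{main2} and Theorem \ref{genreis}) valid for arbitrary fields and arbitrary finite subgroups $G\le\PG(K)$, which forces it to work with splitting fields, extensions of $K$-automorphisms and radical exponents to control inseparability, and which in return only asserts the conclusion for all but finitely many invariant irreducibles. You give a direct, self-contained proof of the cyclic finite-field case: you take $Q_A$ to be a quotient map for $\langle[A]\rangle$ (exactly the normalization the paper advocates, and which sidesteps its question of whether Reis's $Q_A$ is always such a generator), and you replace the splitting-field machinery by the single observation that the Frobenius $\phi$ generates the Galois group and commutes with the M\"obius action, so that the $m$ full-size $\langle\sigma\rangle$-orbits in the root set are permuted transitively by $\phi$ and the fibre $Q_A^{-1}(\beta)$ is one such orbit. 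This buys you the sharp threshold $Dm>2$ in place of an unspecified finite exception set, at the cost of generality; your fixed-point argument for regularity of the orbit is exactly the content of the paper's Lemma \ref{deg2}. Two steps are glossed more than they should be, though neither is a gap in the idea: the ``recombination'' of $(cx+d)^{Dm}h_A(\sigma(x))^{m}$ into a scalar times $h_A(x)^{m}$ is precisely the computation that occupies most of the paper's Lemma \ref{qginv} and relies on the explicit form of $h_A$ as a power of $\prod_{v\in(\langle[A]\rangle\circ\infty)\setminus\{\infty\}}(x-v)$; and in the converse you should record that $h_A(\alpha)\neq 0$ --- which holds because the roots of $h_A$ lie in $\F_q\cup\{\infty\}$ while $\deg(P)>2$ forces $\alpha\notin\F_q$ --- before concluding that $\beta=Q_A(\alpha)$ is finite, that $\deg(F^{Q_A})=Dm$, and that $P$ divides $F^{Q_A}$.
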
 This theorem is proved by dividing $\PG(\F_q)$ into four types of conjugacy classes and showing it for a nice representative of each class.

Let $G\le \PG(K)$ be a finite subgroup and for $A\in\GL(K)$ set \begin{equation}\label{mobi}
    [A]\circ x:=\frac{ax+b}{cx+d}.
\end{equation} There exists a rational function $Q_G\in K(x)$ of degree $|G|$ so that $K(x)^G=K(Q_G(x))$ where $$K(x)^G:=\{Q\in K(x)|~ Q([A]\circ x)=Q(x)\text{ for all } [A]\in G\}$$ is the fixed field of $G$ (for reference see \cite{bluher1}). Moreover, every rational function $Q\in K(x)^G$ of degree $|G|$ is a generator of $K(x)^G$, so we can normalize $Q_G$ in such a way that $Q_G(x)=g(x)/h(x)$ with $0\le \deg(h)<\deg(g)=|G|$ and $g$ monic. Based on \cite{bluher1}, we call these generators \textit{quotient maps} for $G$ and this is the second class of rational functions we consider in this paper. In \cite{reisphd} it is noted that for some $[A]\in\PG(\F_q)$ the functions $Q_A$ in Theorem R are in fact generators of the fixed field $K(x)^{\langle [A]\rangle}$.

A natural question to ask is whether the function $Q_A$ in Theorem R is always a generator of $K(x)^{\langle [A]\rangle}$ for all $[A]\in\PG(\F_q)$. An understanding of this question is of interest since many constructions of irreducible polynomials over finite fields via $Q$-transformations that work very well use specific generators of specific fields of invariant functions, see for example \cite{cohenConst}, \cite{meyn} and \cite{reisConstr}. Another natural question is whether the theorem still holds if we consider $G$-invariant and not necessarily irreducible polynomials for arbitrary finite subgroups of $\PG(K)$. These two questions led us to study the $Q_G$-transformations of irreducible polynomials and their factorization. We did not want to necessarily restrict ourselves to the case that $K$ is finite, so we formulate the results for arbitrary fields. However, the theory is especially beautiful in characteristic $p>0$ because the finite subgroups of $\PG(K)$ are more diverse there (see \cite{pgl}, \cite{subpglk} and \cite{madan}).

The main result and starting point of this paper can be summarized as the following theorem about the factorization of $F^{Q_G}$ for $F$ an irreducible monic polynomial and $Q_G$ a quotient map for $G$: \begin{maintheo}
    Let $F\in K[x]$ be monic and irreducible, $G\le\PG(K)$ a finite subgroup and $Q_G=g/h\in K(x)$ a quotient map for $G$. Then there is an irreducible monic polynomial $r\in K[x]$ with $\deg(F)|\deg(r)$ and an integer $k>0$ such that $$F^{Q_G}(x)=\left(\prod\limits_{t\in G\ast r}t(x)\right)^k,$$ where $G\ast r:=\{[A]\ast r|[A]\in G\}$ is the $G$-orbit of $r$. Additionally $k=1$ for all but finitely many irreducible monic polynomials $F\in K[x]$.
\end{maintheo} The main difficulty of the proof is to show that $k=1$ for all but finitely many irreducible and monic $F\in K[x]$ in non-perfect fields.

We want to point out that a very similar result is known for the case that $F\in\mathcal{I}_K$ is of degree 1 and $K=\F_q$; we state said theorem for convenience: \begin{theo}[{\cite[Theorem 26]{guire}}]
Let $G$ be a subgroup of $\PG(\F_q)$ and $Q(x)=g(x)/h(x)$ a generator for $\F_q(x)^G$. Let $\alpha\in\overline{\F}_q$ have the property that $Q(\alpha)\in\F_q$ and assume that $G$ acts regularly on the roots of $F_{\alpha}(T):=g(T)-Q(\alpha)h(T)\in\F_q[T]$ via M\"obius-Transformation, then \begin{enumerate}
    \item $F_{\alpha}$ will factor into irreducible polynomials of the same degree over $\F_q[T]$
    \item The minimal polynomial of $\alpha$ is one of the factors of $F_{\alpha}$
    \item The degree of each factor must be the order of an element of $G$.
\end{enumerate}
\end{theo} To see that both theorems are connected notice that for $\beta=Q(\alpha)\in\F_q$ we have that $F^{Q_G}(T)=g(T)-\beta h(T)$ for $F=T-\beta$, which factors into a $G$-orbit of an irreducible polynomial by our Main Theorem and all elements in a $G$-orbit have the same degree, which explains item 1. The second item is also true in our setup, that is, if $\beta\in\overline{K}$ is a root of $F$, then $\alpha\in Q_G^{-1}(\beta)$ is a root of $F^{Q_G}$. The third item, however, is a finite field specific result and generalizes to arbitrary fields and irreducible polynomials $F$ of arbitrary degree as follows: Every irreducible factor of $F^{Q_G}$ has degree $\deg(F)$ times the size of a subgroup of $G$. The condition that the set of roots of $F_{\alpha}$ only contains regular $G$-orbits is a crucial one for the case that $k=1$ in the Main Theorem. All of this will be explained in depth in this paper. The phenomenon that $F^{Q_G}$ factorizes into a $G$-orbit of an irreducible polynomial was, until now, only noted for some instances of generators of specific invariant rational function fields over finite fields.
\begin{example}\begin{enumerate}
    \item We start with $Q_1=x+1/x\in\F_q(x)$ and look at the factorization of $F^{Q_1}$, where $F\in\mathcal{I}_q:=\mathcal{I}_{\F_q}$. It is proved in \cite[Lemma 4]{meyn} that $F^{Q_1}$ is either irreducible and \textit{self-reciprocal} or factorizes into a \textit{reciprocal pair}. For $r\in\mathcal{I}_q\setminus\{x\}$ we set $r^{\ast}(x):=a_0^{-1}x^{\deg(r)}r(1/x)$ as its reciprocal polynomial, where $a_0$ is the constant term of $r$. A polynomial is said to be self-reciprocal if $r(x)=r^{\ast}(x)$ and a reciprocal pair is a pair $r,r^{\ast}$ such that $r\neq r^{\ast}$. This result can be explained with our Main Theorem: Let $$G_1=\left\langle\left[\left(\begin{array}{cc}
     0& 1\\
     1& 0
     \end{array}\right)\right]\right\rangle.$$ This is a subgroup of order 2 and a generator of $G_1$ is $Q_{G_1}(x)=x+1/x=(x^2+1)/x\in K(x)$. Then, for all but finitely many irreducible monic polynomials $F\in \mathcal{I}_K$ we obtain that there exists an irreducible monic polynomial $r\in K[x]$ such that $$F^{Q_{G_1}}(x)=\begin{cases} r(x),&\text{ if }F^{Q_{G_1}} \text{ is irreducible}\\
     r(x)\cdot a_0^{-1}x^{\deg(r)}r(1/x),& \text{ if }F^{Q_{G_1}} \text{ is not irreducible}
     \end{cases}.$$
     \item The factorization of $F(x^n)$ in $\F_q[x]$ for $n|q-1$ leads to another nice example. Let $a\in\F_q^{\ast}$ be a primitive $n$-th root of unity. It can be shown that for all $F\in\mathcal{I}_q\setminus\{x\}$ there exists $m\mid n$ and $r\in\mathcal{I}_q$ such that $$F(x^n)=\prod\limits_{i=0}^{m-1}a^{-i\cdot \deg(r)}\cdot r(a^ix).$$ For reference see \cite{albert} and \cite{Daykin}; for a nice application of this see \cite{Maurin}. The rational function $Q_2(x)=x^n$ is a quotient map for the subgroup $$G_2:=\left\{\left[\left(\begin{array}{cc}
     a^i& 0\\
     0& 1
     \end{array}\right)\right]|i\in\mathbb{N}\right\}$$ and $F^{Q_2}(x)=F(x^n)$. The factors belong to the same $G_2$-orbit, since $$\left[\left(\begin{array}{cc}
     a^i& 0\\
     0& 1
     \end{array}\right)\right]\ast r(x)=a^{-i\deg(r)}\cdot r(a^i x).$$
     \item In \cite{comppoly} it is noted that if $F(x^p-x)$ is not irreducible for $F\in\mathcal{I}_q$ and $p^l=q$, then $F(x^p-x)$ factorizes into exactly $p$ irreducible polynomials of degree $\deg(F)$ and, more precisely, there exists $r\in\mathcal{I}_q$ with $\deg(r)=\deg(F)$ such that $$F(x^p-x)=r(x)\cdot r(x+1)\cdot\ldots \cdot r(x+(p-1)).$$ The rational function $Q_3(x)=x^p-x$ is a quotient map for $$G_3:=\left\{\left[\left(\begin{array}{cc}
     1& a\\
     0& 1
     \end{array}\right)\right]|a\in \F_p\right\}$$ and for $r\in K[x]$ the transformation with an element of $G_3$ looks like this $$\left[\left(\begin{array}{cc}
     1& a\\
     0& 1
     \end{array}\right)\right]\ast r(x)=r(x+a).$$
\end{enumerate}
\end{example}
All of these examples still hold in every field $K$ in which the corresponding subgroups of $\PG(K)$ exist. The factorization of $F^{Q_G}$ can be easily obtained by finding just one irreducible factor and calculating the $G$-orbit of this factor. We also see that in literature, apart from \cite[Theorem 26]{guire}, only small cyclic subgroups of $\PG(\F_q)$ of prime order were considered. In contrast, we want to look at big subgroups of $\PG(\F_q)$ instead. We can obtain the following new result over finite fields:
\begin{theorem}\label{facfin}
Let $K=\F_{q}$ and $G\le \PG(\F_q)$ with quotient map $Q_G\in \F_{q}(x)$. Moreover, set $\mu_G\in \mathbb{N}$ as the maximal order of an element in $G$ and let $F\in \F_q[x]$ be an irreducible monic polynomial such that $F^{Q_G}$ is separable. Then we have that $F^{Q_G}$ has at least $|G|/\mu_G$ irreducible factors and every such factor has degree at most $\mu_G\cdot\deg(F)$.
\end{theorem}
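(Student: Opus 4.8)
The plan is to read the factorisation off the Main Theorem and then analyse the stabiliser of a single factor. First I would apply the Main Theorem to $F$ and $Q_G$, obtaining a monic irreducible $r$ and an integer $k>0$ with $F^{Q_G}=\bigl(\prod_{t\in G\ast r}t\bigr)^k$. The hypothesis that $F^{Q_G}$ is separable forces $k=1$, since otherwise every factor would occur with multiplicity $k\ge 2$; hence the monic irreducible factors of $F^{Q_G}$ are exactly the elements of the orbit $G\ast r$, and their number is $|G\ast r|$. Writing $H:=\{[A]\in G:[A]\ast r=r\}$ for the stabiliser of $r$, the orbit--stabiliser theorem gives $|G\ast r|=|G|/|H|$. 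Comparing degrees, $\deg F^{Q_G}=|G|\cdot\deg F$ because $\deg Q_G=|G|>\deg h$, while all members of the orbit share the common degree $\deg r$ (the $\ast$-action preserves degree on polynomials without roots in $\F_q$, and the degenerate case $\deg r=1$ only occurs when $\deg F=1$); hence $\deg r=|H|\cdot\deg F$. Both assertions of the theorem therefore reduce to the single inequality $|H|\le\mu_G$.

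To prove $|H|\le\mu_G$ I would show that $H$ is cyclic, for then $H$ contains an element of order $|H|$ and $\mu_G\ge|H|$ by definition of $\mu_G$. Let $\Omega$ be the set of roots of $r$ in $\overline{\F_q}$, so $|\Omega|=d:=\deg r$. Every $[A]\in H$ permutes $\Omega$ via the Möbius map $[A]\circ(\cdot)$, and because the entries of $A$ lie in $\F_q$ this permutation commutes with the Frobenius $\phi\colon x\mapsto x^q$. Since $r$ is irreducible, $\langle\phi\rangle$ acts simply transitively (regularly) on $\Omega$, so $\langle\phi\rangle\cong\mathbb{Z}/d$ and $\Omega$ is a principal homogeneous space for it.

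Now fix a root $\beta\in\Omega$ and define $e\colon H\to\mathbb{Z}/d$ by letting $e([A])$ be the unique residue with $[A]\circ\beta=\phi^{e([A])}(\beta)$. Using that the $H$-action commutes with $\phi$ and that $\phi$ is transitive, one checks directly that $e$ is a group homomorphism and that $[A]\in\ker e$ exactly when $[A]$ fixes $\beta$, hence fixes every element of $\Omega$. For $d\ge 3$ a non-identity element of $\PG(\overline{\F_q})$ fixes at most two points of $\mathbb{P}^1$, so $\ker e$ is trivial and $H\cong\operatorname{im}(e)\le\mathbb{Z}/d$ is cyclic; for $d\le 2$ the relation $|H|=\deg r/\deg F\le 2$ already makes $H$ cyclic. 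In either case $|H|\le\mu_G$.

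Finally I would assemble the bounds: the number of irreducible factors is $|G\ast r|=|G|/|H|\ge|G|/\mu_G$, and each factor has degree $\deg r=|H|\cdot\deg F\le\mu_G\cdot\deg F$, which is the theorem. I expect the main obstacle to be the cyclicity of the stabiliser: one must set up the two commuting actions correctly, verify that the Möbius action of $H$ genuinely centralises the regular Frobenius action, and dispatch the low-degree cases in which the faithfulness argument via fixed points on $\mathbb{P}^1$ is unavailable.
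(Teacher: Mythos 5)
Your proof is correct, and it follows the paper's skeleton up to the decisive step: apply the Main Theorem, use separability to force $k=1$, and reduce both assertions via orbit--stabiliser and degree counting to the single claim $|\operatorname{Stab}_G(r)|\le\mu_G$, established by showing the stabiliser is cyclic. Where you genuinely diverge is in how cyclicity is proved. The paper invokes Corollary \ref{ffgal}, which rests on the Galois-theoretic machinery of Section 3 --- block systems, point- and set-wise stabilisers of a $G$-orbit of roots, and the abstract lemma on commuting regular actions (Lemma \ref{commuting}) giving $G\cong U\le\operatorname{Gal}(f)$ when the Galois group is abelian; this is why the paper must first record that the $\operatorname{Stab}_G(r)$-orbits in $R_r$ are regular (a consequence of separability via Corollary \ref{size}). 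You exploit the same underlying phenomenon --- the M\"obius action of $H=\operatorname{Stab}_G(r)$ commutes with the simply transitive Frobenius action on the roots --- but turn it into an explicit homomorphism $e\colon H\to\mathbb{Z}/d\mathbb{Z}$ whose kernel is killed by the fact that a non-identity element of $\PG(\overline{\F}_q)$ fixes at most two points of $\mathbb{P}^1$, with the cases $d\le 2$ absorbed by $|H|=\deg r/\deg F\le 2$. Your route is more elementary and self-contained: it is in effect an independent short proof of the fact, cited in the paper as \cite[Theorem 1.3]{reisEx}, that irreducible polynomials of degree at least $3$ over $\F_q$ have cyclic stabilisers in $\PG(\F_q)$, and for $d\ge 3$ it does not need regularity of the $H$-orbits at all. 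The paper's route buys generality: the commuting-actions lemma works over arbitrary fields and identifies $G$ as a subquotient of $\operatorname{Gal}(f)$, not merely as cyclic. One point worth making explicit in your write-up: the computation $[AB]\circ\beta=\phi^{e(A)}(\phi^{e(B)}(\beta))$ uses the commutation of the two actions to turn the left M\"obius action into a genuine homomorphism into $\mathbb{Z}/d\mathbb{Z}$; since the target is abelian, any embedding (homomorphism or anti-homomorphism) suffices for cyclicity.
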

\begin{remark} The polynomial $F^{Q_G}$ is separable if $F\in\mathcal{I}_q$ and $\deg(F)\ge 3$, so the only exception polynomials for which the theorem does not necessarily hold are irreducible polynomials of degree less than 3. For an explanation see Theorem \ref{main1}, Theorem \ref{k=1} and Lemma \ref{deg2}.
\end{remark}
For example, take $\{0\}\neq V\le_p \F_q$ as a $\F_p$-subspace of $\F_q$, then define $$\overset{\sim}{V}:=\left\{\left[\left(\begin{array}{cc}
     1& v\\
     0& 1
     \end{array}\right)\right]|v\in V \right\}.$$ We call $\overset{\sim}{V}$ the to $V$ associated subgroup in $\PG(\F_q)$. Observe that $V\cong \overset{\sim}{V}$ as groups. A quotient map for $\overset{\sim}{V}$ is the to $V$ associated subspace polynomial, that is, $$Q_V(x)=\prod\limits_{v\in V}(x-v)\in\F_q[x].$$ Every non-trivial element in $\overset{\sim}{V}$ has order $p$, so $\mu_{\overset{\sim}{V}}=p$ and therefore we obtain the following corollary \begin{corollary}\label{facunipot}
Let $\{0\}\neq V\le_p \F_q$ be an $\F_p$-subspace of $\F_q$ and $Q_V\in\F_q[x]$ the associated subspace polynomial. For every irreducible (monic) polynomial $F\in K[x]$ we have that $F(Q_{V}(x))$ has at least $|V|/p$ irreducible factors and every irreducible factor has the same degree, which is at most $p\cdot \deg(F)$.
     \end{corollary}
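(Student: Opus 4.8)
The plan is to deduce this directly from Theorem \ref{facfin} applied to the subgroup $G=\overset{\sim}{V}$, after checking that its hypotheses hold. First I would record the relevant group-theoretic data: since $V\cong\overset{\sim}{V}$ and $V$ is an $\F_p$-vector space, the group $\overset{\sim}{V}$ is elementary abelian, so every non-trivial element has order exactly $p$. Hence $\mu_{\overset{\sim}{V}}=p$ and $|\overset{\sim}{V}|=|V|$. The quotient map for $\overset{\sim}{V}$ is the subspace polynomial $Q_V$, as already noted, and $F^{Q_V}=F(Q_V(x))$. Thus Theorem \ref{facfin} would immediately yield at least $|\overset{\sim}{V}|/\mu_{\overset{\sim}{V}}=|V|/p$ irreducible factors, each of degree at most $\mu_{\overset{\sim}{V}}\cdot\deg(F)=p\cdot\deg(F)$, while the Main Theorem would give that all factors share the common degree $\deg(r)$ of the orbit $\overset{\sim}{V}\ast r$.

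The one hypothesis of Theorem \ref{facfin} that must be verified is separability of $F^{Q_V}$, and here lies the only real point of the proof, since the general remark only guarantees separability for $\deg(F)\ge 3$. The key observation is that $Q_V$ is an additive ($\F_p$-linearized) polynomial, so $Q_V(x)=\sum_i c_i x^{p^i}$ with $c_0\neq 0$ (the root $0\in V$ of $Q_V$ is simple, so $Q_V'(0)=c_0\neq 0$), and therefore $Q_V'(x)=c_0$ is a nonzero constant. Consequently $(F(Q_V))'=c_0\,F'(Q_V)$. Over a finite field every monic irreducible polynomial is separable, so $\gcd(F,F')=1$ in $\F_q[x]$; composing a B\'ezout identity $aF+bF'=1$ with $Q_V$ yields $\gcd\big(F(Q_V),F'(Q_V)\big)=1$, whence $\gcd\big(F(Q_V),(F(Q_V))'\big)=1$ and $F(Q_V)$ is separable. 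Equivalently, one can count roots: each of the $\deg(F)$ distinct roots $\beta$ of $F$ has fiber $Q_V^{-1}(\beta)$ equal to a coset of $V$ of size $|V|$, and these fibers are disjoint, so $F(Q_V)$ has $\deg(F)\cdot|V|=\deg(F(Q_V))$ distinct roots.

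With separability established for every irreducible $F$ regardless of degree, the conclusion follows. Theorem \ref{facfin} supplies the factor count $\ge|V|/p$ and the degree bound $p\cdot\deg(F)$, while the Main Theorem shows $F^{Q_V}=\big(\prod_{t\in\overset{\sim}{V}\ast r}t\big)^k$ with $k=1$ forced by separability; since all members of a single $\overset{\sim}{V}$-orbit have equal degree, every irreducible factor has the same degree. I expect the main (and essentially only) obstacle to be the separability check in the excluded low-degree cases $\deg(F)\in\{1,2\}$, which is precisely what the additive structure of $Q_V$ resolves.
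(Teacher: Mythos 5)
Your proposal is correct, and its skeleton is the same as the paper's: both deduce the corollary from Theorem \ref{facfin} applied to $G=\overset{\sim}{V}$ with $\mu_{\overset{\sim}{V}}=p$, and get the equality of degrees of the factors from the fact that they form a single $G$-orbit. The one step where you genuinely diverge is the verification of the separability hypothesis for $\deg(F)\in\{1,2\}$. The paper handles this through the group action: every non-trivial $[A]\in\overset{\sim}{V}$ is a translation $x\mapsto x+v$, whose only fixed point in $\overline{K}\cup\{\infty\}$ is $\infty$, so $P_{\overset{\sim}{V}}=\{\infty\}$, $\mathcal{NC}^{Q_{\overset{\sim}{V}}}=\varnothing$, every $\overset{\sim}{V}$-orbit of roots of $F^{Q_V}$ is regular, and separability then follows from Corollary \ref{size} over the perfect field $\F_q$. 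You instead exploit the additive ($\F_p$-linearized) structure of $Q_V$ directly: $Q_V'=c_0\neq 0$ is constant, so $(F(Q_V))'=c_0F'(Q_V)$ and a B\'ezout identity for $\gcd(F,F')=1$ pushes through the substitution; your fiber-counting variant (each $Q_V^{-1}(\beta)$ is a coset of $V$) is really the paper's regular-orbit argument in additive disguise. Both are valid; the paper's route is the one that generalizes to the other subgroups treated in Section 4 (where non-regular orbits do occur and genuinely obstruct the low-degree cases), while yours is more elementary and self-contained for this particular family, making explicit why the unipotent case has no exceptional polynomials at all.
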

In the last part of this paper we consider two further examples of big subgroups of $\PG(\F_q)$ and show how to apply Theorem \ref{facfin} to them. 

The Main Theorem shows that the irreducible factors of $F^{Q_G}$ belong to the same $G$-orbit. Together with the fact that for every $G$-orbit $G\ast r$ in $\mathcal{I}_K$ there exists an irreducible $F\in\mathcal{I}_K$ such that $F^{Q_G}$ has all polynomials in $G\ast r$ as its factors we can prove a generalization of Theorem R: \begin{theorem}\label{Reisgen}
All but finitely many $G$-invariant irreducible monic polynomials $f$ can be written as a $Q_G$-transformation, i.e. there is $F\in\mathcal{I}_K$ such that $f=F^{Q_G}$.
\end{theorem}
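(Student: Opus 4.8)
The plan is to construct, for a given $G$-invariant irreducible monic $f$, an explicit candidate $F\in\mathcal{I}_K$ and then feed it into the Main Theorem. First I would fix a root $\beta\in\overline{K}$ of $f$, set $\gamma:=Q_G(\beta)$, and let $F$ be the minimal polynomial of $\gamma$ over $K$. This choice forces us to set aside the finitely many $f$ whose roots are poles of $Q_G$ (roots of $h$), for which $\gamma=\infty$; since $h$ has finitely many roots, only finitely many irreducible $f$ are discarded this way. The assignment $f\mapsto F$ does not depend on the chosen root, because $Q_G\in K(x)$: a Galois conjugate $\beta'=\sigma(\beta)$ with $\sigma\in\Gal(\overline{K}/K)$ satisfies $Q_G(\beta')=\sigma(\gamma)$, a conjugate of $\gamma$ with the same minimal polynomial $F$.

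Next I would show $f\mid F^{Q_G}$. Since $h(\beta)\neq 0$ we have $F^{Q_G}(\beta)=h(\beta)^{\deg F}F(Q_G(\beta))=h(\beta)^{\deg F}F(\gamma)=0$, so $\beta$ is a root of $F^{Q_G}$; as $f$ is the minimal polynomial of $\beta$, it divides $F^{Q_G}$. Applying the Main Theorem, $F^{Q_G}=\left(\prod_{t\in G\ast r}t\right)^{k}$ for some irreducible monic $r$ and integer $k\ge 1$. Because $f$ is an irreducible factor of this product, $f$ lies in the orbit $G\ast r$; but $f$ is $G$-invariant, so $G\ast f=\{f\}$, and sharing an element forces $G\ast r=G\ast f=\{f\}$. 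Hence $F^{Q_G}=f^{k}$.

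The crucial step is to transfer the ``all but finitely many'' clause of the Main Theorem, which is phrased over the starting polynomials $F$, to the target polynomials $f$. For this I would check that $f\mapsto F$ is injective on $G$-invariant irreducibles: if $f_1$ and $f_2$ both yield the same $F$, then each divides $F^{Q_G}=f_1^{k_1}$ by the previous paragraph, and an irreducible polynomial dividing a power of an irreducible must equal it, whence $f_1=f_2$. The Main Theorem provides a finite set $B$ of polynomials $F$ for which the exponent $k$ exceeds $1$; by injectivity the preimage of $B$ is finite, so for every $f$ outside this finite set (together with the finitely many $f$ meeting a pole of $Q_G$ excluded above) we get $k=1$ and therefore $f=F^{Q_G}$.

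The main obstacle I anticipate lies not in this argument but is already isolated inside the Main Theorem, namely showing $k=1$ for all but finitely many $F$ in non-perfect fields (the separability phenomenon flagged immediately after the Main Theorem). Once that input is granted, the work here reduces to the bookkeeping of the injection $f\mapsto F$ and the harmless removal of finitely many exceptional $f$. I would also record, for the non-exceptional $f$, that $\deg f=|G|\cdot\deg F$, which follows at once by comparing degrees in $f=F^{Q_G}$ using $\deg(Q_G)=|G|$.
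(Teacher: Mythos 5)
Your argument is correct and follows essentially the same route as the paper: the candidate $F$ is the minimal polynomial of $Q_G(\beta)$ for a root $\beta$ of $f$ (exactly how the paper establishes surjectivity of its map $\delta_{Q_G}:\mathcal{I}_K\to\mathcal{I}_K^G/G$ in Corollary \ref{main2}), $G$-invariance collapses the orbit to $\{f\}$ so that $F^{Q_G}=f^k$, and the finiteness of exceptions is transferred back through injectivity of $f\mapsto F$ — which the paper packages as the bijectivity of $\delta_{Q_G}$ together with the finiteness of the non-conformal set $\mathcal{NC}^{Q_G}$. Your direct divisibility proof of injectivity is a harmless variant of the paper's root-set comparison, so no gap remains beyond the inputs you correctly flag as coming from the Main Theorem.
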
 This result does not say anything about the existence of $G$-invariant irreducible polynomials, it just makes a statement about them if they exist in $K[x]$!

Our proof of a general version of Theorem R avoids the original idea of dividíng $\PG(K)$ into different types of conjugacy classes and showing the theorem for each type, which should be hard as such a list can become quite large depending on the field, see \cite{subpglk} or \cite{madan}. 

The last result shows that the $G$-invariant but not-necessarily irreducible polynomials are a product of a $Q_G$-transformation and some exception polynomials: \begin{theorem}\label{arbginv}
    Let $G\le\PG(K)$ be a finite subgroup and $Q_G$ a quotient map. There exists $k\in \mathbb{N}\setminus\{0\}$ and irreducible monic polynomials $r_1,\ldots r_k\in K[x]$ and $n_1,\ldots n_k\in \mathbb{N}\setminus\{0\}$ such that for every $G$-invariant monic polynomial $f\in K[x]$ there is a unique monic $F\in K[x]$ and integers $k_i< n_i$ such that \begin{equation*}
     f=\left(\prod\limits_{i=1}^k(\prod\limits_{t\in G\ast r_i} t)^{k_i}\right)\cdot F^{Q_G}.
\end{equation*} 
\end{theorem}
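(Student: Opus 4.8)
The plan is to realize the set of $G$-invariant monic polynomials as a free commutative monoid and to describe the image of the $Q_G$-transform inside it. First I would record two elementary facts about the map $F\mapsto F^{Q_G}$: it is \emph{multiplicative}, since $F^{Q_G}=h^{\deg F}F(g/h)$ gives $(F_1F_2)^{Q_G}=F_1^{Q_G}F_2^{Q_G}$ after monic normalization, and it is \emph{injective}, because $\deg F^{Q_G}=|G|\deg F$ forces $\deg F_1=\deg F_2$ and then $F_1(Q_G)=F_2(Q_G)$ with $Q_G$ non-constant yields $F_1=F_2$. Write $\mathcal{T}=\{F^{Q_G}:F\in K[x]\text{ monic}\}$ for the image, and $P_O:=\prod_{t\in O}t$ for the product over a $G$-orbit $O$ of monic irreducibles. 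I would then show that every $G$-invariant monic $f$ is uniquely a product $\prod_O P_O^{m_O}$: since $\ast$ is multiplicative and sends irreducibles to irreducibles, the identity $[A]\ast f=f$ together with unique factorization forces $G$ to permute the irreducible factors of $f$ with constant multiplicity along orbits. Linear factors (roots of $f$ in $K$) are treated through the induced action of $G$ on $K\cup\{\infty\}$, their $G$-orbits being the finite Möbius orbits that avoid $\infty$.

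The key structural input is that each orbit is hit by a \emph{single} irreducible polynomial. If $F$ is irreducible and $F^{Q_G}=P_O^{\,k}$ by the Main Theorem, then the roots of $F^{Q_G}$ are exactly the roots of the orbit elements; any such root is $[A]\circ\rho'$ for $\rho'$ a root of a representative $r$ of $O$, and by $G$-invariance $Q_G([A]\circ\rho')=Q_G(\rho')$. Hence every root of $F$ equals $Q_G(\rho')$ for some root $\rho'$ of the irreducible $r$, so all these values are Galois conjugate and $F$ must be the minimal polynomial $F_O$ of $Q_G(\rho')$. Thus $O\mapsto F_O$ is a two-sided inverse of $F\mapsto O_F$, there is exactly one irreducible mapping to $O$, and $F_O^{Q_G}=P_O^{\,n_O}$ for a well-defined $n_O\ge 1$. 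Consequently $\{m\ge 0 : P_O^{\,m}\in\mathcal{T}\}=n_O\mathbb{N}$: any monic $F$ with $F^{Q_G}$ a pure power of $P_O$ must, by unique factorization and uniqueness of $F_O$, be a power of $F_O$. By the Main Theorem $n_O=1$ for all but finitely many orbits; I label the finitely many exceptional ones $O_1,\dots,O_k$ with representatives $r_1,\dots,r_k$ and set $n_i:=n_{O_i}>1$.

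With this in hand the decomposition is the division algorithm applied orbit by orbit. Given $f=\prod_O P_O^{m_O}$, for each exceptional orbit write $m_{O_i}=n_iq_i+k_i$ with $0\le k_i<n_i$, and leave $m_O$ untouched where $n_O=1$. Then
\[
F:=\prod_{i=1}^k F_{O_i}^{\,q_i}\cdot\prod_{O\notin\{O_1,\dots,O_k\}}F_O^{\,m_O}
\]
is monic with $F^{Q_G}=\prod_{i=1}^k P_{O_i}^{\,n_iq_i}\cdot\prod_{O\notin\{O_1,\dots,O_k\}}P_O^{\,m_O}$, so that $f=\bigl(\prod_{i=1}^k P_{O_i}^{\,k_i}\bigr)F^{Q_G}$, which is the asserted shape since $P_{O_i}=\prod_{t\in G\ast r_i}t$. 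For uniqueness I would compare exponents in the free monoid: the exponent of $P_{O_i}$ in any element of $\mathcal{T}$ lies in $n_i\mathbb{N}$ (only $F_{O_i}$ maps to $O_i$, always with the factor $n_i$), so $k_i\equiv m_{O_i}\pmod{n_i}$ is forced, pinning down each $k_i\in[0,n_i)$; then $F^{Q_G}$ is determined and $F$ is recovered by injectivity.

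The main obstacle is the per-orbit rigidity statement that exactly one irreducible maps to each orbit, equivalently $\{m:P_O^{\,m}\in\mathcal{T}\}=n_O\mathbb{N}$ with no gaps; everything else is bookkeeping. Making the roots/Galois argument precise requires care about multiplicities — in non-perfect fields $F^{Q_G}$ may be inseparable, and this is exactly the source of an $n_O>1$ — and, above all, a clean treatment of the factors of $f$ coming from roots in $K$: one must verify that $G$-orbits on $K\cup\{\infty\}$ passing through $\infty$ do not contribute genuine polynomial factors, so that the free-monoid description and the bijection $O\leftrightarrow F_O$ remain valid for \emph{all} $G$-invariant $f$, and not merely for those without roots in $K$.
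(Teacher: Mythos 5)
Your proposal is correct and follows essentially the same route as the paper: decompose $f$ into powers of orbit polynomials (the paper's Corollary \ref{fac1}), use the bijection between monic irreducibles and $G$-orbits in $\mathcal{I}_K^G$ induced by $\delta_{Q_G}$ (Corollary \ref{main2}) together with the finiteness of the non-conformal polynomials to identify the exceptional orbits, and then apply division with remainder orbit by orbit. Your explicit free-monoid bookkeeping for uniqueness is a slightly more detailed version of the paper's closing remark that $H_1$ and $H_2$ are unique, and the worry you raise about orbits through $\infty$ is resolved exactly as you suspect: a $G$-invariant monic polynomial automatically lies in $\mathcal{NR}_K^G$, since a root in $G\circ\infty\setminus\{\infty\}$ would force a degree drop under some $[A]\ast$.
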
  We give full explanations about what the polynomials $r_1,\ldots,r_k$ and the integers $n_i$ are in section 3.

\section{Preliminaries}
\subsection{Invariant Polynomials}
We denote by $\circ:\PG(K)\times (\K)\to \K$ the M\"obius-Transformation on $\K$, that is, $$[A]\circ v=\frac{av+b}{cv+d}.$$ This equation is self-explanatory if $v\notin\{\infty,-\frac{d}{c}\}$. For $c\neq 0$ we set $[A]\circ \infty=\frac{a}{c}$ and $[A]\circ(-\frac{d}{c})=\infty$; $[A]\circ \infty=\infty$ if $c=0$. The M\"obius-Transformation is a left group action of $\PG(K)$ on $\K$ and thus every subgroup of $\PG(K)$ acts on $\K$ too. For $G\le \PG(K)$ we denote the $G$-orbit of $v\in\K$ as $G\circ v$. Let $G\le \PG(K)$ then define $$\NR_K^G:=\{f\in K[x]|f \text{ monic and } f(\alpha)\neq 0\text{ for all }\alpha\in G\circ\infty\}.$$ This set is closed under multiplication, i.e. is a submonoid of $K[x]$. We make the convention that $f(\infty)=\infty$ for all polynomials of degree greater than 0 and $a(\infty)=a$ for $a\in K$. The following basic result about $\ast$ holds: \begin{lemma}
\label{basic}
Let $G\le\PG(K)$. For all $f,g\in\mathcal{NR}_K^G$ and $[A],[B]\in G$ the following hold: \begin{enumerate}
    \item $\deg([A]\ast f)=\deg(f)$
    \item $[AB]\ast f=[B]\ast([A]\ast f)$ and $[I_2]\ast f=f$, so $\ast$ is a right group action of $G$ on $\mathcal{NR}_K^G$
    \item $[A]\ast (fg)=([A]\ast f)([A]\ast g)$
    \item $f$ irreducible if and only if $[A]\ast f$ irreducible
\end{enumerate}
\end{lemma}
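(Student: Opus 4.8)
The plan is to verify the four items in order, settling first the well-definedness of $\ast$ on $\NR_K^G$, since this is what makes the later assertions meaningful. Fix $A\in\GL(K)$ of the form (\ref{Amat}) and write $f=\sum_{i=0}^k a_i x^i$ with $a_k=1$ and $k=\deg(f)$. Expanding the unnormalized transform
\[
\widetilde f(x):=(cx+d)^{k}f\!\left(\frac{ax+b}{cx+d}\right)=\sum_{i=0}^{k}a_i(ax+b)^i(cx+d)^{k-i},
\]
the coefficient of $x^k$ is $\sum_{i=0}^k a_i a^i c^{k-i}$; for $c\neq 0$ this equals $c^k f(a/c)=c^k f([A]\circ\infty)$, while for $c=0$ only the term $i=k$ survives and it equals $a^k$. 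Because $[A]\circ\infty\in G\circ\infty$ and $f\in\NR_K^G$, this leading coefficient is nonzero in both cases, so $\deg(\widetilde f)=k$ and $\lambda_{A,f}$ is just its inverse. This proves item 1 and exhibits $[A]\ast f$ as a monic polynomial of degree $k$; the same expansion shows that replacing $A$ by $\alpha A$ scales $\widetilde f$ by $\alpha^k$, so after normalization $[A]\ast f$ depends only on the coset $[A]$.

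Before invoking the group-action language I would check that $\ast$ maps $\NR_K^G$ into itself. The roots of $[A]\ast f$ in $\overline{K}$ are exactly the preimages $[A^{-1}]\circ\beta$ of the roots $\beta$ of $f$; if some such preimage lay in $G\circ\infty$, then applying $[A]\in G$ and using the $G$-invariance of the orbit $G\circ\infty$ would force $\beta\in G\circ\infty$, contradicting $f\in\NR_K^G$. With closure in hand, item 2 rests on the single fact that M\"obius transformations compose by matrix multiplication, i.e. $m_A\circ m_B=m_{AB}$ for $m_A(x)=(ax+b)/(cx+d)$. Up to its normalizing scalar, $[A]\ast f$ is $f\circ m_A$ cleared of denominators, so precomposition reverses order: $[B]\ast([A]\ast f)$ and $[AB]\ast f$ both represent $f\circ m_{AB}$ and are therefore scalar multiples of one another; being monic of the same degree (item 1, applied at each stage since $[A]\ast f\in\NR_K^G$) they coincide, and $[I_2]\ast f=f$ is clear since $m_{I_2}=\id$. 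This is precisely the right-action statement.

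Item 3 comes from the identity $\widetilde{fg}(x)=(cx+d)^{\deg(f)+\deg(g)}f(m_A(x))\,g(m_A(x))=\widetilde f(x)\,\widetilde g(x)$ of the unnormalized transforms; normalizing both sides to monic polynomials yields $[A]\ast(fg)=([A]\ast f)([A]\ast g)$. Item 4 is then formal: a monic factorization $f=gh$ has $g,h\in\NR_K^G$ (any root of $g$ or $h$ is a root of $f$, hence outside $G\circ\infty$), so item 3 turns it into a factorization $[A]\ast f=([A]\ast g)([A]\ast h)$ with both factors of positive degree by item 1, and conversely item 2 carries any factorization of $[A]\ast f$ back to one of $f=[A^{-1}]\ast([A]\ast f)$. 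I expect no genuine obstacle here: the content is elementary, and the only real care is the bookkeeping of the constants $\lambda_{A,f}$ together with the degenerate behavior at $\infty$ and at the pole $-d/c$, which is exactly what the $\NR_K^G$ hypothesis and the conventions $f(\infty)=\infty$, $a(\infty)=a$ are set up to control.
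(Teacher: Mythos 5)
Your proof is correct. The paper itself omits the proof of this lemma entirely, deferring to \cite{GarefalakisGL} and \cite{sticht}, and your argument is exactly the standard one those references use: compute the leading coefficient of the unnormalized transform to get degree preservation (this is where $f([A]\circ\infty)\neq 0$, i.e.\ the hypothesis $f\in\NR_K^G$, enters), check closure of $\NR_K^G$ under $\ast$ via the root correspondence $R_{[A]\ast f}=[A^{-1}]\circ R_f$, and then derive items 2--4 from $m_A\circ m_B=m_{AB}$ and multiplicativity of the unnormalized transform. The only point you wave at rather than verify is that $-d/c$ is never a root of $[A]\ast f$ (one computes $\widetilde f(-d/c)=\bigl((bc-ad)/c\bigr)^{\deg f}\neq 0$), which is needed for the root correspondence you use for closure; this is a one-line check and does not affect correctness.
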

We omit the proof as it can be done almost exactly as in \cite{GarefalakisGL} or \cite{sticht}. Because of the fourth item of the previous lemma we know that $G$ induces a group action on $$\mathcal{I}_K^G:=\mathcal{I}_K\cap \NR_K^G.$$ Remember that we write $G\ast f$ for the $G$-orbit of $f$. Note that $G\ast f\subset \NR_K^G$ if $f\in\NR_K^G$ and every polynomial in the orbit has the same degree as $f$. The following lemma explains the connection between $G$-invariant polynomials and the M\"obius-Transformation and the proof can be done similarly as in \cite{GarefalakisGL} or \cite{sticht} again: \begin{lemma}\label{stich}
    Let $G\le \PG(K)$ and $f\in\mathcal{NR}_K^G$. Further we denote by \begin{equation}\label{roots}
    R_f:=\{v\in\overline{K}|f(v)=0\}
\end{equation} the set of roots of $f$ in $\overline{K}$. Then the following hold:\begin{enumerate}
    \item If $f$ is $G$-invariant, then $[A]\circ R_f=R_f$ for all $[A]\in G$. Here $[A]\circ R_f:=\{[A]\circ v|v\in R_f\}$
    \item If $f$ is irreducible the converse is also true, more precisely: $[A]\circ R_f=R_f$ for all $[A]\in G$ implies that $f$ is $G$-invariant
\end{enumerate}
\end{lemma}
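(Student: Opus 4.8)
The plan is to reduce both statements to a single computation describing how the set of roots transforms under $\ast$. Concretely, I would first establish the key identity
$$R_{[A]\ast f}=[A^{-1}]\circ R_f \qquad \text{for every } [A]\in G \text{ and } f\in\NR_K^G.$$
To prove this, write $n=\deg(f)$ and factor $f(y)=\prod_{w\in R_f}(y-w)$ over $\overline{K}$ (with multiplicity). Substituting $y=(ax+b)/(cx+d)$ and clearing denominators gives
$$(cx+d)^n f\left(\frac{ax+b}{cx+d}\right)=\prod_{w\in R_f}\big((a-wc)x+(b-wd)\big),$$
and since $[A]\ast f$ differs from the left-hand side only by the nonzero scalar $\lambda_{A,f}$, its roots are exactly the solutions of $(a-wc)x+(b-wd)=0$. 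Here is the one place where membership in $\NR_K^G$ is essential: the leading coefficient $a-wc$ of the $w$-th factor vanishes precisely when $w=a/c=[A]\circ\infty\in G\circ\infty$, which is excluded because $f$ has no root in $G\circ\infty$. Hence each factor is genuinely linear and contributes the finite root $x=(wd-b)/(a-wc)=[A^{-1}]\circ w$, and since $w\mapsto[A^{-1}]\circ w$ is injective on $R_f$ the two root sets agree, proving the identity.

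With the identity in hand, the two parts follow quickly. For part (1), if $f$ is $G$-invariant then $[A]\ast f=f$, so $R_f=R_{[A]\ast f}=[A^{-1}]\circ R_f$ for every $[A]\in G$; since $[A]\mapsto[A]^{-1}$ is a bijection of the group $G$, this is equivalent to $[B]\circ R_f=R_f$ for all $[B]\in G$. For part (2), assume $f$ is irreducible (so in particular $\deg(f)\ge 1$ and $R_f\neq\emptyset$) and that $[A]\circ R_f=R_f$ for all $[A]\in G$. Applying the hypothesis to $[A^{-1}]\in G$ and invoking the identity gives $R_{[A]\ast f}=[A^{-1}]\circ R_f=R_f$. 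Thus $f$ and $[A]\ast f$ are two monic polynomials with the same set of roots; since $f$ is irreducible and $[A]\ast f$ is irreducible by Lemma \ref{basic}(4), two monic irreducible polynomials sharing a root must coincide, whence $[A]\ast f=f$. As this holds for each $[A]\in G$, the polynomial $f$ is $G$-invariant.

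The computation itself is routine, so I do not expect a serious obstacle; the only point that genuinely requires care is the bookkeeping at $\infty$. One must ensure that no root of $f$ is sent to $\infty$, so that $[A]\ast f$ really has $n$ finite roots and its degree is preserved in accordance with Lemma \ref{basic}(1), and that $\infty$ itself does not appear as a spurious root. This is exactly what the $\NR_K^G$ condition provides, and it explains why the lemma is phrased for that monoid rather than for arbitrary polynomials. Finally, the irreducibility assumption in part (2) is not cosmetic: it is precisely what upgrades ``same set of roots'' to ``equal polynomials,'' since without it two polynomials with identical root sets could still differ in their root multiplicities.
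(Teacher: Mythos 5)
Your proof is correct, and it follows essentially the route the paper intends: the paper omits the proof (deferring to the cited references), but the key identity you establish, $R_{[A]\ast f}=[A^{-1}]\circ R_f$, is exactly the computation those references carry out and is the form in which the paper itself invokes this lemma later (e.g.\ in the proof of Corollary \ref{monoid}). Your handling of the $\NR_K^G$ condition at $\infty$ and the use of Lemma \ref{basic}(4) plus uniqueness of the monic minimal polynomial for part (2) are precisely the intended points of care.
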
 From now on we use $R_f$ as the set of roots of a polynomial $f\in K[x]$ as defined in (\ref{roots}).
In the lemma above we did not make the assumption that $G$ has to be a finite subgroup of $\PG(K)$. So now, we want to explore what happens if $G$ is infinite. Let $[A]\in\PG(K)$, then it is quite obvious that all fixed points of $[A]$ in $\overline{K}$ under $\circ$ are in $K\cup\{\infty\}$ or in a quadratic extension of $K$. Let $v\in\overline{K}$ with $[K(v):K]\ge 3$, then $[A]\circ v\neq v$ for all $[A]\in \PG(K)$, so $G\circ v$ contains infinitely many elements. Therefore there can not exist $G$-invariant irreducible monic polynomials of degree greater than 2 for $G$ an infinite subgroup of $\PG(K)$, since otherwise it would have infinitely many roots by Lemma \ref{stich}. This is one of the reasons why we focus on finite subgroups of $\PG(K)$. Thus, from now on, $G$ denotes a finite subgroup of $\PG(K)$.
The following corollary helps us to understand the factorization of $G$-invariant polynomials: \begin{corollary}\label{monoid}
Let $f,s,t\in\mathcal{NR}_K^G$, where $f$ is a $G$-invariant polynomial with irreducible factor $r\in\mathcal{I}_K^G$. Then the following hold: \begin{enumerate}
    \item $[A]\ast r$ divides $f$ for all $[A]\in G$
    \item If $\gcd(s,t)=1$, then $\gcd([A]\ast s,[A]\ast t)=1$ for all $[A]\in G$
    \item If $r^n|f$ and $r^{n+1}\nmid f$, then $([A]\ast r)^n|f$ and $([A]\ast r)^{n+1}\nmid f$ for all $[A]\in G$. So all polynomials in $G\ast r$ divide $f$ with the same multiplicity
\end{enumerate}
\end{corollary}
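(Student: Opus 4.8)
The plan is to reduce all three statements to a single structural observation: for each fixed $[A]\in G$ the map $\varphi_A\colon\NR_K^G\to\NR_K^G$, $\varphi_A(f)=[A]\ast f$, is a multiplicative bijection of the monoid $\NR_K^G$ that carries irreducibles to irreducibles. Indeed, Lemma \ref{basic}(3) gives multiplicativity, Lemma \ref{basic}(2) shows $\varphi_{A^{-1}}$ is a two-sided inverse of $\varphi_A$ (since $[A^{-1}]\ast([A]\ast f)=[AA^{-1}]\ast f=[I_2]\ast f=f$, and symmetrically), and Lemma \ref{basic}(4) gives preservation of irreducibility. The one point that needs care is that cofactors stay inside the submonoid: if $f\in\NR_K^G$ and $d\mid f$ with $d$ monic, then $f/d$ is monic and every root of $f/d$ is a root of $f$, hence lies outside $G\circ\infty$, so $f/d\in\NR_K^G$ as well. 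With this in hand, $\varphi_A$ preserves divisibility in both directions: for monic $d,f\in\NR_K^G$ one has $d\mid f$ in $K[x]$ if and only if $[A]\ast d\mid[A]\ast f$, because writing $f=d\cdot(f/d)$ and applying $\varphi_A$ (resp. applying $\varphi_{A^{-1}}$ for the reverse implication) turns a factorization into a factorization.

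For item (1) I would write $f=r\cdot(f/r)$ with $f/r\in\NR_K^G$ by the remark above. Applying $\varphi_A$ and using that $f$ is $G$-invariant, so $[A]\ast f=f$, together with Lemma \ref{basic}(3), gives $f=([A]\ast r)\cdot([A]\ast(f/r))$, which exhibits $[A]\ast r$ as a divisor of $f$. For item (2) I would argue by contraposition through common irreducible factors, since $\gcd(s,t)=1$ is precisely the statement that $s$ and $t$ share no monic irreducible factor in $K[x]$; note that the B\'ezout description of the gcd is unavailable here because $\ast$ is multiplicative but not additive. Suppose $[A]\ast s$ and $[A]\ast t$ had a common monic irreducible factor $p$; then $p\in\NR_K^G$, and applying $\varphi_{A^{-1}}$ shows that the irreducible polynomial $[A^{-1}]\ast p$ divides both $s$ and $t$, contradicting $\gcd(s,t)=1$.

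Item (3) combines the two previous ideas. Writing $f=r^n\cdot g$ with $r\nmid g$ (the exact-power hypothesis, with $g=f/r^n\in\NR_K^G$), applying $\varphi_A$ and invoking $G$-invariance of $f$ gives $f=([A]\ast r)^n\cdot([A]\ast g)$ by multiplicativity; and $[A]\ast r\nmid[A]\ast g$ by the divisibility-preservation observation, since otherwise $r\mid g$. As $[A]\ast r$ is irreducible, this says precisely that $([A]\ast r)^n\mid f$ while $([A]\ast r)^{n+1}\nmid f$. Finally, every element of $G\ast r$ has the form $[A]\ast r$, and taking $[A]=[I_2]$ shows the common multiplicity equals that of $r$, which gives the last sentence.

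I expect no serious obstacle here: the corollary is essentially bookkeeping on top of Lemma \ref{basic}. The only genuinely delicate points are, first, verifying that taking cofactors does not leave $\NR_K^G$, so that the multiplicative identity of Lemma \ref{basic}(3) may legitimately be applied to the factorizations involved, and second, keeping in mind that $\ast$ respects products but not sums, which is what forces the gcd argument in item (2) to be phrased via common irreducible factors rather than via a B\'ezout relation.
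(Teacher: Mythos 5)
Your proof is correct, but for items (1) and (2) it takes a genuinely different route from the paper's. The paper proves those two items by passing to the algebraic closure: by Lemma \ref{stich} one has $[A^{-1}]\circ R_r=R_{[A]\ast r}\subseteq R_f$, so the irreducible polynomial $[A]\ast r$ shares a root with $f$ and therefore divides it; likewise $\gcd(s,t)=1$ is translated into $R_s\cap R_t=\varnothing$, and the disjointness is transported by the bijection $v\mapsto [A^{-1}]\circ v$ on $\overline{K}\cup\{\infty\}$. You instead stay entirely inside $K[x]$ and use only Lemma \ref{basic}: the map $\varphi_A$ is a multiplicative, irreducibility-preserving bijection of the monoid $\NR_K^G$, and --- the point you rightly isolate as the only delicate one --- monic cofactors of elements of $\NR_K^G$ remain in $\NR_K^G$, so $\varphi_A$ preserves divisibility in both directions. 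This buys a proof that never leaves $K[x]$ and makes no appeal to root sets, at the small cost of having to verify cofactor stability explicitly; the paper's root-based argument is shorter for item (2), where $\gcd=1$ becomes disjointness of root sets on the nose. For item (3) the two arguments essentially coincide: the paper writes $f=r^nP$ with $\gcd(r,P)=1$ and applies multiplicativity together with its item (2), while you write $f=r^ng$ with $r\nmid g$ and invoke divisibility preservation --- for irreducible $r$ these conditions are equivalent, so the difference is cosmetic.
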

\begin{proof}
The first statement is immediate, since $[A^{-1}]\circ R_r=R_{[A]\ast r}\subset R_f$ by Lemma \ref{stich}, so $[A]\ast r$ divides $f$ as well. 

The condition $\gcd(s,t)=1$ is equivalent to $R_s\cap R_t=\varnothing$ in $\overline{K}$ and again $[A^{-1}]\circ R_s=R_{[A]\ast s}$ and $[A^{-1}]\circ R_t=R_{[A]\ast t}$. With the fact that every $[A]\in G$ induces a bijection on $\overline{K}\cup\{\infty\}$ we obtain $R_{[A]\ast s}\cap R_{[A]\ast t}=\varnothing$.

For the last item we use the previous statement: Write $f=r^n\cdot P$ where $\gcd(r,P)=1$. Then, with the third item of Lemma \ref{basic}
\begin{align*}
    f=[A]\ast f=[A]\ast(r^n\cdot P)=([A]\ast r)^n\cdot ([A]\ast P)
\end{align*} and $\gcd([A]\ast r,[A]\ast P)=1$. 
\end{proof}
We just showed that every $G$-invariant polynomial $f\in\NR_K^G$ consists of powers of $G$-orbits in $\mathcal{I}_K^G$ that are glued together by multiplication. So, in a nutshell, $G$-orbits in $\mathcal{I}_K^G$ are the atoms of $G$-invariant polynomials and thus are quite important for this paper; hence the following definition: \begin{definition}
    We call $f\in\mathcal{NR}_K^G$ $G$-orbit polynomial (or simply orbit polynomial) if there exists an irreducible polynomial $r\in\mathcal{I}_K^G$ such that \begin{equation*}
        f=\prod\limits_{t\in G\ast r} t=:\prod(G\ast r).
    \end{equation*}
\end{definition}
For the sake of completeness we state our observation about the factorization of $G$-invariant polynomials as a corollary, but we omit the proof since it is a trivial consequence of Corollary \ref{monoid}.
\begin{corollary}\label{fac1}
Let $f\in\mathcal{NR}_K^G$ be a $G$-invariant polynomial. Then there are $r_1,\ldots,r_k\in\mathcal{I}_K^G$ and $n_1,\ldots,n_k\in\mathbb{N}\setminus\{0\}$ such that \begin{equation*}
    f=\prod\limits_{i=1}^k(\prod(G\ast g_i))^{n_i}.
\end{equation*}
\end{corollary}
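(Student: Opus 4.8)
The plan is to read off the factorization directly from unique factorization in $K[x]$ together with Corollary \ref{monoid}, so the entire argument is bookkeeping once those ingredients are in hand. Since $f$ is monic and $K[x]$ is a unique factorization domain, I would first write $f$ as its factorization into pairwise distinct monic irreducible polynomials, say $f=\prod_j p_j^{m_j}$. The first thing to verify is that each $p_j$ lies in $\mathcal{I}_K^G$: because $f\in\NR_K^G$, no root of $f$ lies in the orbit $G\circ\infty$, and as every root of a divisor $p_j$ is also a root of $f$, the same holds for $p_j$; hence $p_j\in\NR_K^G$ and therefore $p_j\in\mathcal{I}_K^G=\mathcal{I}_K\cap\NR_K^G$.

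Next I would invoke part 4 of Lemma \ref{basic}, by which $\ast$ restricts to a group action of $G$ on $\mathcal{I}_K^G$, so the (finite) set of monic irreducible divisors of $f$ is partitioned into disjoint $G$-orbits. Part 1 of Corollary \ref{monoid} shows that for each irreducible factor $r$ of $f$ the entire orbit $G\ast r$ already divides $f$; consequently the set of monic irreducible divisors of $f$ is a union of full orbits. Choosing one representative $r_i$ from each of the $k$ distinct orbits that occur, the orbits $G\ast r_1,\dots,G\ast r_k$ account for every irreducible divisor of $f$ exactly once. The multiplicities are then controlled by part 3 of Corollary \ref{monoid}: all polynomials in a given orbit $G\ast r_i$ divide $f$ with one common multiplicity, which I call $n_i\in\mathbb{N}\setminus\{0\}$. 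Collecting the factorization $f=\prod_j p_j^{m_j}$ orbit by orbit then yields $f=\prod_{i=1}^k\big(\prod_{t\in G\ast r_i} t\big)^{n_i}=\prod_{i=1}^k(\prod(G\ast r_i))^{n_i}$, which is exactly the asserted form.

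I do not expect any serious obstacle here, since the statement is essentially a reformulation of Corollary \ref{monoid}; indeed this is why the paper can omit the proof. The only two points requiring a word of care are the verification that each irreducible factor of $f$ genuinely lands in $\NR_K^G$ (so that the orbit action and Corollary \ref{monoid} apply to it), and the observation that distinct orbits contribute disjoint sets of irreducible factors, which is what guarantees that no factor is double-counted and that the exponents $n_i$ are well defined.
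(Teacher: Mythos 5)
Your proof is correct and follows exactly the route the paper intends: the paper omits the argument as a ``trivial consequence of Corollary \ref{monoid}'', and what you have written is precisely that consequence spelled out, including the two points genuinely worth checking (that every irreducible factor of $f$ lies in $\mathcal{I}_K^G$, and that the common multiplicity along each orbit from part 3 of Corollary \ref{monoid} makes the exponents $n_i$ well defined). Nothing is missing.
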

\subsection{Quotient Maps and Rational Transformations}
Throughout the rest of the paper we assume that $Q_G\in K(x)$ is a quotient map for $G$ with monic numerator polynomial $g$. Note that such a rational function exists for all finite subgroups of $\PG(K)$ and if $Q_G'$ is another quotient map for $G$, then there are constants $a,b\in K$ such that $Q_G'(x)=aQ_G(x)+b$ (see \cite{bluher1}). We denote by $(\K)/G$ the set of $G$-orbits in $\K$. The following theorem is an important tool for proving our Main Theorem and explains the name "quotient map": \begin{theorem}[{\cite[Proposition 3.9]{bluher1}}]\label{bij}
The quotient map $Q_G$ induces a bijection between $(\overline{K}\cup\{\infty\})/G$ and $\overline{K}\cup\{\infty\}$, more precisely \begin{align*}
    \psi:\begin{cases} (\overline{K}\cup\{\infty\})/G\to \overline{K}\cup\{\infty\},\\
       G\circ v\mapsto Q_G(G\circ v)=Q_G(v)\end{cases}
\end{align*} is a bijection and $\psi(G\circ\infty)=Q_G(\infty)=\infty$.
\end{theorem}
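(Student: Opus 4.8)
The plan is to verify that $\psi$ is well-defined and then establish surjectivity and injectivity separately, viewing $\K$ as the set of places of the rational function field and transporting the injectivity question into the Galois theory of the extension $\overline{K}(x)/\overline{K}(Q_G)$. Well-definedness and the value at infinity are immediate: since $Q_G$ generates $K(x)^G$ it lies in the fixed field, so $Q_G([A]\circ v)=Q_G(v)$ for every $[A]\in G$ and every $v\in\K$; hence $Q_G$ is constant on each $G$-orbit and $G\circ v\mapsto Q_G(v)$ is independent of the representative. The normalization $0\le\deg h<\deg g=|G|$ forces $Q_G(\infty)=\infty$, the asserted value on $G\circ\infty$. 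Surjectivity is then a degree count over the algebraically closed field $\overline{K}$: for finite $\beta\in\overline{K}$ a preimage is a root of $g(T)-\beta h(T)$, a polynomial of degree $|G|\ge 1$ because $\deg g>\deg h$, so it has a root in $\overline{K}$, while $\infty$ is the preimage of $\infty$.

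The substance of the theorem is injectivity, i.e.\ that each fiber of $Q_G$ is \emph{one} $G$-orbit and not merely a union of several. Here I would pass to $\overline{K}$. Since $G\le\PG(K)$ acts on $\overline{K}(x)$ by the $\overline{K}$-automorphisms $x\mapsto[A]\circ x$, and these are distinct for distinct elements of $\PG$, Artin's theorem makes $\overline{K}(x)/\overline{K}(x)^G$ a Galois extension with group $G$ and $[\overline{K}(x):\overline{K}(x)^G]=|G|$. On the other hand $Q_G$ is $G$-invariant, so $\overline{K}(Q_G)\subseteq\overline{K}(x)^G$, while the standard fact that the field-extension degree of a rational function equals its degree gives $[\overline{K}(x):\overline{K}(Q_G)]=\deg Q_G=|G|$. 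Comparing the two degrees forces $\overline{K}(x)^G=\overline{K}(Q_G)$, so $\overline{K}(x)/\overline{K}(Q_G)$ is Galois with group $G$.

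With this in place the injectivity is the classical transitivity of a Galois group on the places above a fixed place. Because $\overline{K}$ is algebraically closed, the points of $\K$ are exactly the places of $\overline{K}(x)$, the points $\beta\in\K$ are the places of $\overline{K}(Q_G)\cong\overline{K}(y)$ under $y=Q_G$, and two points $v,w$ satisfy $Q_G(v)=Q_G(w)$ precisely when the corresponding places lie over the same place of $\overline{K}(Q_G)$; the transitivity of $\Gal(\overline{K}(x)/\overline{K}(Q_G))=G$ on such places says exactly that $v$ and $w$ lie in a single $G$-orbit, which is injectivity of $\psi$. This is consistent with Lemma \ref{stich}, under which each fiber is already seen to be $G$-stable; the upgrade from ``$G$-stable'' to ``single orbit'' is the entire content.

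I expect this last step to be the main obstacle. The easy direction, that every fiber is a union of orbits, is just invariance, and a naive count shows that a generic $\beta$ has $|G|$ distinct preimages forming one free orbit; but that elementary argument degenerates exactly at the finitely many branch values of $Q_G$. Invoking the transitivity of the Galois group on the primes above a given prime is what secures the conclusion uniformly at every $\beta$ at once, including the ramified points and the wild ramification that can occur in characteristic $p$.
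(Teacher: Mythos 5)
Your proposal is correct, but note that the paper itself offers no proof of this statement: it is imported wholesale as \cite[Proposition 3.9]{bluher1}, so there is nothing internal to compare against. Judged on its own, your argument is a sound, self-contained route. The decomposition is the natural one: well-definedness and $\psi(G\circ\infty)=\infty$ from $G$-invariance and the normalization $\deg h<\deg g=|G|$; surjectivity from $g(T)-\beta h(T)$ having degree exactly $|G|$ over the algebraically closed field (you should add the one-line observation that a root $v$ of $g-\beta h$ cannot also be a root of $h$, since $\gcd(g,h)=1$, so that $Q_G(v)=\beta$ genuinely holds rather than being a $0/0$ indeterminacy); and injectivity from identifying $\overline{K}(x)^G$ with $\overline{K}(Q_G)$ by the degree comparison $[\overline{K}(x):\overline{K}(x)^G]=|G|=\deg Q_G=[\overline{K}(x):\overline{K}(Q_G)]$ and then invoking transitivity of $\Gal(\overline{K}(x)/\overline{K}(Q_G))\cong G$ on the places above a fixed place of $\overline{K}(Q_G)$. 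You correctly locate the entire content in the injectivity at the branch values, where naive root-counting degenerates. Two small points worth making explicit if you write this up: first, that $[A]\mapsto(x\mapsto[A]\circ x)$ embeds $G$ faithfully into $\operatorname{Aut}_{\overline{K}}(\overline{K}(x))$ (so Artin's theorem applies with group of order exactly $|G|$); second, the dictionary between the Galois action on places of $\overline{K}(x)$ and the M\"obius action on $\K$, namely that the automorphism $x\mapsto[A]\circ x$ sends the place at $v$ to the place at $[A]^{-1}\circ v$ --- the inverse is harmless since orbits coincide, but the bookkeeping should be stated so that ``lying over the same place'' translates exactly into ``lying in one $G$-orbit.''
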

There are essentially two ways to calculate a quotient map for a given finite subgroup $G$ that we know of. One of them is explained in \cite{guire} and works as follows: Calculate the polynomial $$F_G(y):=\prod\limits_{[A]\in G}(y-([A]\circ x))\in K(x)[y].$$ One of the coefficients of $F_G(y)$ has to be a generator of $K(x)^G$ and thus can be normalized so that it becomes a quotient map. Another method is explained in subsection 3.3. in \cite{bluher1}.

For an arbitrary rational function $Q(x)=g(x)/h(x)$ with $g,h$ having leading coefficients $a(g),a(h)\in K^{\ast}$ we set $$Q(\infty):=\begin{cases}
    \infty, &\deg(g)>\deg(h)\\
    \frac{a(g)}{a(h)}, &\deg(g)=\deg(h)\\
    0, &\deg(h)>\deg(g).
\end{cases}$$ We collect some known facts about rational transformations in the next lemma: \begin{lemma}\label{basicqg}
Let $Q=\frac{g}{h}$ be such that $g$ is monic and $F\in K[x]$ such that $F(Q(\infty))\neq 0$, then the following hold: \begin{enumerate}
    \item $\deg(F^{Q})=\deg(F)\cdot \deg(Q)$
    \item If $F$ is reducible so is $F^{Q}$, more precisely: $F=rt$ and $\deg(r),\deg(t)\ge 1$, then $F^{Q}=r^{Q}t^{Q}$
    \item If $F$ is monic and $\deg(g)>\deg(h)$, then $F^{Q}$ is monic as well
\end{enumerate}
\end{lemma}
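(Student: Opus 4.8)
The plan is to argue directly from the explicit expansion $F^{Q}(x)=\sum_{i=0}^{k}a_i\,g(x)^i h(x)^{k-i}$, where $k=\deg(F)$ and $F=\sum_{i=0}^k a_i x^i$ with $a_k\neq 0$, and to track the top-degree behaviour of this sum. Setting $d=\deg(Q)=\max\{\deg(g),\deg(h)\}$, each summand $a_i g^i h^{k-i}$ has degree $i\deg(g)+(k-i)\deg(h)$ whenever $a_i\neq 0$, so the whole argument reduces to comparing these degrees and checking that the leading terms do not cancel. I would split into three cases according to whether $\deg(g)>\deg(h)$, $\deg(g)<\deg(h)$, or $\deg(g)=\deg(h)$; the role of the hypothesis $F(Q(\infty))\neq 0$ is precisely to rule out cancellation in the delicate equal-degree case.

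For part 1 the two unequal-degree cases are immediate: if $\deg(g)>\deg(h)$ the summand $i=k$, namely $a_k g^k$, strictly dominates all others and has degree $k\deg(g)=kd$; if $\deg(g)<\deg(h)$ the summand $i=0$, namely $a_0 h^k$, dominates, and here $Q(\infty)=0$ forces $a_0=F(0)=F(Q(\infty))\neq 0$, so this term is genuinely present. In the equal-degree case $d=\deg(g)=\deg(h)$ every nonzero summand has degree $kd$, so I must compute the coefficient of $x^{kd}$. Since $g$ is monic, the leading coefficient of $g^i h^{k-i}$ is $a(h)^{k-i}$, and summing gives $\sum_{i=0}^{k}a_i a(h)^{k-i}=a(h)^k F\big(1/a(h)\big)$; because $Q(\infty)=1/a(h)$ in this case, this equals $a(h)^k F(Q(\infty))\neq 0$. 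Thus in all three cases no cancellation occurs and $\deg(F^{Q})=kd=\deg(F)\deg(Q)$.

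Part 2 then follows almost formally from the definition: for $F=rt$ we have $\deg(rt)=\deg(r)+\deg(t)$, so $h^{\deg(rt)}(rt)(g/h)=\big(h^{\deg(r)}r(g/h)\big)\big(h^{\deg(t)}t(g/h)\big)=r^{Q}t^{Q}$. I would first note that $F(Q(\infty))=r(Q(\infty))t(Q(\infty))\neq 0$ (interpreting the convention $f(\infty)=\infty$ when $Q(\infty)=\infty$, which is harmless since $\deg(r),\deg(t)\ge 1$) guarantees that both $r^{Q}$ and $t^{Q}$ are of the expected degrees, so that part 1 applied to $r$, $t$ and $F$ confirms the degrees match up. Part 3 is a direct consequence of the first case of part 1: when $F$ is monic and $\deg(g)>\deg(h)$, the dominating term is $a_k g^k=g^k$, whose leading coefficient is $1$ since both $F$ and $g$ are monic.

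I expect the only real subtlety to be the equal-degree case of part 1, where a naive degree count would permit cancellation of the $x^{kd}$ terms; the computation identifying the leading coefficient with $a(h)^k F(Q(\infty))$ is the crux, and it is exactly here that the hypothesis $F(Q(\infty))\neq 0$ is used. Everything else is bookkeeping with the summation formula and the conventions for evaluating $F$ and $Q$ at $\infty$.
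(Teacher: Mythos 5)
Your proof is correct. The paper states this lemma without proof (it is presented as a collection of known facts), so there is no argument to compare against; your case analysis on $\deg(g)$ versus $\deg(h)$ is the standard one, and you correctly isolate the only delicate point, namely that in the equal-degree case the coefficient of $x^{kd}$ is $a(h)^{k}F\bigl(1/a(h)\bigr)=a(h)^{k}F(Q(\infty))$, which is where the hypothesis $F(Q(\infty))\neq 0$ is genuinely needed (in the other two cases it is automatic from the conventions $F(\infty)=\infty$ and $F(0)=a_0$). Parts 2 and 3 are, as you say, formal consequences of the definition and of the first case of part 1.
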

The following lemma shows that rational transformations with $Q_G$ yield $G$-invariant polynomials: \begin{lemma}\label{qginv}
Let $F\in K[x]$ be a monic polynomial, then $F^{Q_G}\in\mathcal{NR}_K^G$ and $F^{Q_G}$ is $G$-invariant.
\end{lemma}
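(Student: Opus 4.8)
The plan is to verify separately that $F^{Q_G}$ lies in $\mathcal{NR}_K^G$ and that it is fixed by every $[A]\in G$ under $\ast$. Write $k=\deg(F)$, $F(x)=\sum_{i=0}^k a_ix^i$ with $a_k=1$, and recall $Q_G=g/h$ with $g$ monic, $\deg(g)=|G|>\deg(h)$ and $\gcd(g,h)=1$; the case $k=0$ gives $F^{Q_G}=1$ and is trivial, so I assume $k\ge 1$. Monicity of $F^{Q_G}$ is immediate from Lemma \ref{basicqg}(3), since $F(Q_G(\infty))=F(\infty)\neq0$ as $\deg(F)\ge1$. For the non-vanishing on $G\circ\infty$ I would invoke Theorem \ref{bij}: because $\psi(G\circ\infty)=\infty$ and $\psi$ is a bijection, the finite points of $G\circ\infty$ are exactly the finite poles of $Q_G$, i.e. the roots of $h$. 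Evaluating $F^{Q_G}(x)=\sum_{i=0}^k a_i g(x)^i h(x)^{k-i}$ at such a root $\alpha$ of $h$ kills every summand with $k-i\ge1$ and leaves $F^{Q_G}(\alpha)=g(\alpha)^k$, which is nonzero since $\gcd(g,h)=1$ forbids a common root of $g$ and $h$. Together with $F^{Q_G}(\infty)=\infty\neq0$ this yields $F^{Q_G}\in\mathcal{NR}_K^G$.

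For $G$-invariance I would argue by direct substitution, exploiting $Q_G([A]\circ x)=Q_G(x)$. Set $u=[A]\circ x$ and $n=\deg(F^{Q_G})=k|G|$. Applying the definition of $\ast$ and then $g(u)/h(u)=g(x)/h(x)$ gives
\[
[A]\ast F^{Q_G}(x)=\lambda\,(cx+d)^{n}\,h(u)^k F\!\left(\tfrac{g(u)}{h(u)}\right)=\lambda\,(cx+d)^{n}\,h(u)^k F\!\left(\tfrac{g(x)}{h(x)}\right),
\]
and since $F(g(x)/h(x))=F^{Q_G}(x)/h(x)^k$ this collapses to $[A]\ast F^{Q_G}(x)=\lambda\,M(x)^k\,F^{Q_G}(x)$ with $M(x)=(cx+d)^{|G|}h(u)/h(x)$. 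The crux is then to show that $M$ is a nonzero constant.

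To do so I would put $G_A(x)=(cx+d)^{|G|}g(u)$ and $H_A(x)=(cx+d)^{|G|}h(u)$; these are polynomials of degree at most $|G|$, and the invariance of $Q_G$ reads $G_A(x)/H_A(x)=g(x)/h(x)$, i.e. $G_A h=H_A g$. Because $\gcd(g,h)=1$, $g$ divides $G_A$, so $G_A=gE$ for a polynomial $E$; comparing degrees gives $\deg(E)=\deg(G_A)-|G|\le0$, whence $E$ is a nonzero constant, and then $H_A=hE$ as well, so $M=H_A/h=E$ is constant. Consequently $[A]\ast F^{Q_G}=\lambda E^k F^{Q_G}$, and comparing leading coefficients (both sides being monic of degree $n$ by Lemma \ref{basic}(1)) forces $\lambda E^k=1$, so $[A]\ast F^{Q_G}=F^{Q_G}$. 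As $[A]\in G$ was arbitrary, $F^{Q_G}$ is $G$-invariant.

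The main obstacle is precisely the constancy of $M$: one must rule out that the homogenized numerator and denominator $G_A,H_A$ pick up a spurious common polynomial factor under the substitution $x\mapsto[A]\circ x$. This is where coprimality $\gcd(g,h)=1$ together with the degree bound $\deg(G_A)\le|G|=\deg(g)$ is essential; equivalently, it reflects that $[A]\circ\infty$ is a pole of $Q_G$ (a root of $h$) and hence never a root of $g$, so no degree is lost in passing from $g$ to $G_A$.
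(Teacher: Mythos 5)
Your proof is correct, and the second half takes a genuinely different route from the paper. For membership in $\mathcal{NR}_K^G$ you do essentially what the paper does (evaluate $F^{Q_G}=\sum a_i g^i h^{k-i}$ at a root $w$ of $h$ to get $g(w)^k\neq 0$), differing only in how you identify the roots of $h$ with $G\circ\infty\setminus\{\infty\}$: you deduce it from the bijection of Theorem \ref{bij} ($Q_G(v)=\infty$ iff $v\in G\circ\infty$), whereas the paper imports the explicit factorization $h(x)=\prod_{v\in(G\circ\infty)\setminus\{\infty\}}(x-v)^{m_\infty}$ from Bluher. The real divergence is in the invariance argument. The paper substitutes that product formula into $(cx+d)^{\deg(F^{Q_G})}h([A]\circ x)^{\deg F}$ and manipulates the product over the orbit term by term, with a case distinction $c\neq 0$ versus $c=0$, to exhibit the constant $\alpha_A$ explicitly. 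You instead homogenize: setting $G_A=(cx+d)^{|G|}g([A]\circ x)$ and $H_A=(cx+d)^{|G|}h([A]\circ x)$, the identity $Q_G([A]\circ x)=Q_G(x)$ gives $G_Ah=H_Ag$, and coprimality plus the degree bound $\deg(G_A)\le|G|=\deg(g)$ force $G_A=Eg$, $H_A=Eh$ for a nonzero constant $E$, after which monicity pins down $\lambda E^k=1$. This is cleaner and more self-contained: it needs only $\deg(g)=|G|>\deg(h)$, $\gcd(g,h)=1$ and the $G$-invariance of $Q_G$ as a rational function, avoids the case split on $c$, and does not rely on the explicit shape of $h$. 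What the paper's computation buys in exchange is the explicit value of the constant $\alpha_A$ in terms of $\det(A)$ and the orbit of $\infty$, which your abstract argument does not produce (though it is not needed for the lemma). Both arguments are complete; I would only suggest stating explicitly that $G_A\neq 0$ (because $g([A]\circ x)$ is a nonzero rational function), which is what rules out $E=0$.
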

\begin{proof}
    By Theorem 3.10 and Proposition 3.4 in \cite{bluher1}, the denominator of $Q_G$ is of the form \begin{equation*}
    h(x)=\prod\limits_{v\in (G\circ \infty)\setminus\{\infty\}}(x-v)^{m_{\infty}},
\end{equation*} where $m_\infty:=|\operatorname{Stab}_G(\infty)|=\frac{|G|}{|G\circ\infty|}$ is the cardinality of the stabilizer of $\infty$ in $G$. First of all we want to show that $F^{Q_G}\in\mathcal{NR}_K^G$. For that we write $F(x)=\sum\limits_{i=0}^k a_ix^i$ with $a_k=1$, then \begin{equation*}
    F^{Q_G}(x)=\sum\limits_{i=0}^k a_ig(x)^i h(x)^{k-i}.
\end{equation*} Since the set of roots of $h$ is $G\circ \infty\setminus\{\infty\}$ we obtain for all $w\in G\circ \infty\setminus\{\infty\}$: \begin{align*}
    F^{Q_G}(w)=\sum\limits_{i=0}^k a_ig(w)^i h(w)^{k-i}=g(w)^k\neq 0.
\end{align*} The last step in the calculation is a consequence of $\gcd(h,g)=1$ which implies $g(w)\neq 0$. Thus we got $F^{Q_G}\in\mathcal{NR}_K^G$, since $\mathcal{NR}_K^G$ is the set of monic polynomials with no roots in the orbit of $\infty$ and $f(\infty)=\infty\neq 0$ for all polynomials\footnote{Note that the only polynomial of degree 0 in $\mathcal{NR}_K^G$ is $1$} with $\deg(f)\ge 1$. 

For the calculation that $F^{Q_G}$ is indeed $G$-invariant we verify whether for all $[A]\in G$ there exists $\alpha_{A}\in K^{\ast}$ such that \begin{equation*}
   (cx+d)^{\deg(F^{Q_G})}F^{Q_G}(\frac{ax+b}{cx+d})=\alpha_{A} F^{Q_G}(x)
\end{equation*} Writing the left side out gives \begin{align*}
    (cx+d)^{\deg(F^{Q_G})}F^{Q_G}((A\circ x))=(cx+d)^{\deg(F^{Q_G})}h(\frac{ax+b}{cx+d})^{\deg(F)}F(Q_G(\frac{ax+b}{cx+d})).
\end{align*} Since $Q_G(\frac{ax+b}{cx+d})=Q_G(x)$, we can focus on $(cx+d)^{\deg(F^{Q_G})}h(\frac{ax+b}{cx+d})^{\deg(F)}$.
For that we have to consider 2 separate cases, namely $c\neq 0$ and $c=0$. Here we only consider the first as the latter can be done similarly. So now $c\neq 0$, then we get: \begin{align*}
    &(cx+d)^{\deg(F^{Q_G})}h(\frac{ax+b}{cx+d})^{\deg(F)}\\&=(cx+d)^{\deg(F)\deg(Q_G)}\prod\limits_{v\in (G\circ \infty)\setminus\{\infty\}}(\frac{ax+b}{cx+d}-v)^{m_{\infty}\deg(F)}\\&=\frac{(cx+d)^{\deg(F)\cdot |G|}}{(cx+d)^{\deg(F)\cdot(|G\circ \infty|-1)\cdot\frac{|G|}{|G\circ \infty|}}}\prod\limits_{v\in(G\circ \infty)\setminus\{\infty\}}\left((a-cv)x+(b-dv))\right)^{m_{\infty}\cdot\deg(F)}\\&=\left((cx+d)\left(b-d\cdot\frac{a}{c}\right)\prod\limits_{v\in G\circ \infty \setminus\{\infty,\frac{a}{c}\}}(a-cv)\left(x+\frac{b-dv}{a-cv}\right)\right)^{m_{\infty}\cdot\deg(F)}\\&=\alpha_A\left(\left(x+\frac{d}{c}\right)\prod\limits_{v\in G\circ \infty\setminus\{\infty,\frac{a}{c}\}}\left(x-[A^{-1}]\circ v\right)\right)^{m_{\infty}\cdot\deg(F)}\\&=\alpha_A\left(\left(x-[A^{-1}]\circ\infty\right)\prod\limits_{v\in G\circ \infty\setminus\{\infty,\frac{a}{c}\}}\left(x-[A^{-1}]\circ v\right)\right)^{m_{\infty}\cdot\deg(F)}\\&=\alpha_A\left(\prod\limits_{v\in G\circ \infty\setminus\{\frac{a}{c}\}}\left(x-[A^{-1}]\circ v\right)\right)^{m_{\infty}\cdot\deg(F)}\\&=\alpha_A\left(\prod\limits_{u\in G\circ \infty\setminus\{\infty\}}\left(x-u\right)\right)^{m_{\infty}\cdot\deg(F)}=\alpha_A h(x)^{\deg(F)}.
\end{align*} The factor $\alpha_A$ has the form \begin{equation*}
    \alpha_A=\left(\underbrace{c(b-d\cdot\frac{a}{c})}_{=-\det(A)}\prod\limits_{v\in G\circ \infty\setminus\{\infty,\frac{a}{c}\}}(a-cv)\right)^{{m_{\infty}\cdot\deg(F)}},
\end{equation*} so it is non-zero, since all its factors are non-zero. Moreover note that the inverse of $[A]$ in $\PG(K)$ is $[B]$ with \begin{equation*}
    B=\left(\begin{array}{cc}
     d& -b\\
     -c& a
     \end{array}\right),
\end{equation*} because $A\cdot B=\det(A)I_2$. This finishes the proof.
\end{proof}
This lemma does not hold in general if we consider a general generator instead of quotient maps. Consider a quotient map $Q_G=g/h$, then $Q:=Q_G^{-1}=h/g$ is a generator of $K(x)^G$. However, for $F=x$ we get $F^{Q}=h(x)$, which is not in $\NR_K^G$ because $h$ has $G\circ\infty\setminus\{\infty\}$ as its roots; therefore $F^Q$ can not be $G$-invariant. This example suggests that the only exceptions are the monic polynomials $F\in K[x]$ with root $Q(\infty)$. To show that we need a lemma first. \begin{lemma}
Let $Q_G\in K(x)$ be a quotient map for $G$ and $Q\in K(x)$ another generator of $K(x)^G$, then $\deg(Q)=\deg(Q_G)=|G|$ and there is $[C]\in\PG(K)$ such that $Q=[C]\circ Q_G$. More precisely there is \begin{equation*}
    C=\begin{pmatrix}a&b\\c&d\end{pmatrix}\in\operatorname{GL}_2(K) 
\end{equation*} such that  \begin{equation*}
    Q(x)=\frac{aQ_G(x)+b}{cQ_G(x)+d}.
\end{equation*}
\end{lemma}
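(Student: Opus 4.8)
The plan is to reduce everything to the classical description of the generators of a simple transcendental extension. Write $L:=K(x)^G$ throughout. Since $Q_G$ is a quotient map we have $L=K(Q_G)$ with $\deg(Q_G)=|G|$, and since $Q$ is by hypothesis a generator of $L$ we have $L=K(Q)$; in particular $K(Q)=K(Q_G)=L$.

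First I would settle the degree claim using the standard fact that for a nonconstant rational function $R=g/h\in K(x)$ in lowest terms one has $[K(x):K(R)]=\deg(R)=\max\{\deg g,\deg h\}$. Combining this with $[K(x):L]=|G|$ — which holds because the finite group $G$ acts faithfully on $K(x)$ with fixed field $L$, so $K(x)/L$ is Galois of degree $|G|$ — gives $\deg(Q)=[K(x):K(Q)]=[K(x):L]=|G|=\deg(Q_G)$.

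Next I would produce the matrix $C$. Because $Q\in K(Q_G)$, there is a rational function $\phi\in K(T)$ with $Q=\phi(Q_G)$; write $\phi=p/q$ in lowest terms and put $d_\phi:=\deg(\phi)=\max\{\deg p,\deg q\}$. The key step is the degree formula $[K(Q_G):K(\phi(Q_G))]=d_\phi$. Indeed, $Q_G$ is a root of $\Phi(X):=p(X)-\phi(Q_G)\,q(X)\in K(\phi(Q_G))[X]$, which has degree $d_\phi$ in $X$ and is irreducible over $K(\phi(Q_G))$. Irreducibility I would obtain from Gauss's lemma: setting $w:=\phi(Q_G)=Q$, which is transcendental over $K$, the bivariate polynomial $p(X)-W\,q(X)\in K[X,W]$ is linear in $W$ with content $\gcd(p,q)=1$, hence irreducible in $K[X,W]$ and, after checking it is also primitive as a polynomial in $X$ over $K[W]$, irreducible in $K(W)[X]$; specializing $W\mapsto w$ yields that $\Phi$ is irreducible over $K(w)$. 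Thus $\Phi$ is, up to a scalar, the minimal polynomial of $Q_G$ over $K(\phi(Q_G))$, giving the formula. Since $K(\phi(Q_G))=K(Q)=K(Q_G)$, this index is $1$, forcing $d_\phi=1$. A degree-one rational function is exactly $\phi(T)=\frac{aT+b}{cT+d}$ with $ad-bc\neq 0$, so setting $C=\begin{pmatrix}a&b\\c&d\end{pmatrix}\in\GL(K)$ yields $Q=\phi(Q_G)=\frac{aQ_G+b}{cQ_G+d}=[C]\circ Q_G$, as claimed.

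The genuinely nontrivial point — the step I would spend the most care on — is the degree formula $[K(u):K(\phi(u))]=\deg(\phi)$ for $u$ transcendental over $K$, equivalently the irreducibility of $p(X)-\phi(u)\,q(X)$ over $K(\phi(u))$; the primitivity verification inside the Gauss's lemma argument is the only place where coprimality of $p$ and $q$ is really used, and everything else is formal manipulation of field extensions. I note that this lemma is, in essence, the classical statement that any two transcendental generators of a simple transcendental extension $K(t)/K$ differ by an element of $\PG(K)$ acting via M\"obius transformation, specialized to the two generators $Q_G$ and $Q$ of $L$; one could alternatively invoke that classical fact directly.
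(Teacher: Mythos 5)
Your argument is correct and follows the same route as the paper: the paper's "proof" simply cites Lemma 3.1 and Proposition 3.3 of Bluher, which are exactly the two classical facts you establish — that $[K(x):K(R)]=\deg(R)$ for $R$ in lowest terms, and that any two generators of a simple transcendental extension differ by a fractional linear transformation. The only difference is that you supply the Gauss's-lemma irreducibility argument in full where the paper defers to the reference, and your verification of primitivity in $X$ over $K[W]$ (the one place coprimality of $p$ and $q$ is used) is the right point to be careful about and is handled correctly.
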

\begin{proof}
The proof is a combination of Lemma 3.1 and Proposition 3.3 in \cite{bluher1}.
\end{proof}
\begin{corollary}\label{arbgen}
Let $Q_G=\frac{g}{h}$ be a quotient map, $Q\in K(x)$ an arbitrary generator of $K(x)^G$ and $F\in K[x]$ monic. Write $Q=[C]\circ Q_G$, which is possible by the lemma above. If $F([C]\circ \infty)\neq 0$, or equivalently $F(Q(\infty))\neq 0$, then $a\cdot F^{Q}\in\mathcal{NR}_K^G$ and $a\cdot F^Q$ is $G$-invariant (the factor $a\in K^{\ast}$ is needed to make $F^{Q_G}$ monic).
\end{corollary}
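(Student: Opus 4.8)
The plan is to exhibit $F^{Q}$, up to a scalar in $K^{\ast}$, as a $Q_G$-transform of a suitable monic polynomial and then invoke Lemma~\ref{qginv}. First I would record that the two hypotheses coincide: since $\deg(g)>\deg(h)$ we have $Q_G(\infty)=\infty$, hence $Q(\infty)=[C]\circ Q_G(\infty)=[C]\circ\infty$, so $F([C]\circ\infty)\neq 0$ if and only if $F(Q(\infty))\neq 0$. Writing $Q=[C]\circ Q_G=(aQ_G+b)/(cQ_G+d)$ and clearing the inner denominator $h$ gives the reduced form $Q=\tilde g/\tilde h$ with $\tilde g=ag+bh$ and $\tilde h=cg+dh$; these are coprime because $\gcd(g,h)=1$ and $C\in\GL(K)$ is invertible.

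The candidate monic polynomial is $[C]\ast F$, the image of $F$ under the M\"obius action (\ref{asttrans}) by $[C]$. The crucial observation --- and the place where the hypothesis is really used --- is that $\deg([C]\ast F)=\deg(F)$: the polynomial $(cx+d)^{\deg(F)}F\big(\tfrac{ax+b}{cx+d}\big)$ has leading coefficient a nonzero multiple of $F([C]\circ\infty)$, which is nonzero exactly under our assumption, so no degree drop occurs. If instead $F([C]\circ\infty)=0$, the degree would drop and the computation below would break, which explains why the condition cannot be dropped.

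With the degree fixed, the key identity follows by a direct substitution. Evaluating the polynomial identity $[C]\ast F(y)=\lambda_{C,F}(cy+d)^{\deg(F)}F\big(\tfrac{ay+b}{cy+d}\big)$ at $y=g/h$ and multiplying by $h^{\deg(F)}$ yields
$$([C]\ast F)^{Q_G}=\lambda_{C,F}\,(cg+dh)^{\deg(F)}F\!\left(\frac{ag+bh}{cg+dh}\right)=\lambda_{C,F}\,\tilde h^{\deg(F)}F\!\left(\frac{\tilde g}{\tilde h}\right)=\lambda_{C,F}\cdot F^{Q},$$
where the middle step uses $h^{\deg(F)}(cg/h+d)^{\deg(F)}=(cg+dh)^{\deg(F)}$ and the last is just the definition of $F^{Q}$ for $Q=\tilde g/\tilde h$; here it is essential that the exponent $\deg(F)$ appearing in the $Q_G$-transform agrees with the true degree of $[C]\ast F$, which is guaranteed by the previous paragraph.

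Finally I would conclude: $[C]\ast F$ is monic of degree $\deg(F)$, so Lemma~\ref{qginv} gives $([C]\ast F)^{Q_G}\in\NR_K^G$ and that it is $G$-invariant. Since $\deg(g)>\deg(h)$, Lemma~\ref{basicqg}(3) shows $([C]\ast F)^{Q_G}$ is monic, so the scalar $\lambda_{C,F}$ in the identity above is exactly the constant making $F^{Q}$ monic --- this is the factor denoted $a\in K^{\ast}$ in the statement. Hence $a\cdot F^{Q}=([C]\ast F)^{Q_G}\in\NR_K^G$ is $G$-invariant, as claimed. The only genuine obstacle is the degree-preservation step; everything else is bookkeeping around the normalizing constants.
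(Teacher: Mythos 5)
Your proposal is correct and follows essentially the same route as the paper: both take $H=[C]\ast F$, use the hypothesis $F([C]\circ\infty)\neq 0$ to ensure $\deg(H)=\deg(F)$, verify by direct substitution that $H^{Q_G}=\lambda_{C,H}F^{Q}$, and then transfer $G$-invariance and membership in $\NR_K^G$ from Lemma~\ref{qginv}. Your additional remarks (the equivalence of the two hypotheses via $Q_G(\infty)=\infty$, and the coprimality of $ag+bh$ and $cg+dh$) are correct bookkeeping that the paper leaves implicit.
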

\begin{proof}
Let $H=[C]\ast F$, then $\deg(H)=\deg(F)$ by the assumption that $F([C]\circ\infty)\neq 0$ (see Lemma \ref{basic}). Write \begin{equation*}
    C=\begin{pmatrix}a&b\\c&d\end{pmatrix}\in\operatorname{GL}_2(K),
\end{equation*} then $H(x)=\lambda_{C,H}(cx+d)^{\deg(F)}F(\frac{ax+b}{cx+d})$ and $Q(x)=\frac{ag(x)+bh(x)}{cg(x)+dh(x)}$. Moreover \begin{align*}
    H^{Q_G}(x)&=h(x)^{\deg(H)}H(\frac{g(x)}{h(x)})\\&=\lambda_{C,H} h(x)^{\deg(H)}(c\frac{g(x)}{h(x)}+d)^{\deg(F)}F\left(\frac{a\frac{g(x)}{h(x)}+b}{c\frac{g(x)}{h(x)}+d}\right)\\&=\lambda_{C,H}(cg(x)+dh(x))^{\deg(F)}F\left(\frac{ag(x)+bh(x)}{cg(x)+dh(x)}\right)=\lambda_{C,H} F^Q(x).
\end{align*} With Lemma \ref{qginv} we have that $H^{Q_G}$ is $G$-invariant and an element of $\mathcal{NR}_K^G$, thus both facts are also true for $\lambda_{C,H}F^Q$.
\end{proof}
\section{Proof of the Main Theorem}
Here $\overline{K}$ denotes a fixed algebraic closure of $K$ and for a polynomial $P\in K[x]$ we denote its splitting field in $\overline{K}$ as $L_P$. Further we define for an extension $K\subset L\subset \overline{K}$ the set $\operatorname{hom}_K(L)$ as the $K$-automorphisms $\sigma:L\to \overline{K}$. If $L$ is normal over $K$ then $\operatorname{hom}_K(L)=\operatorname{Aut}_K (L)$. We are going to prove the first part of the Main Theorem: 
\begin{theorem}\label{main1}
Let $F\in\mathcal{I}_K$, then there is $k\in\mathbb{N}\setminus\{0\}$ and $r\in\mathcal{I}_K^G$ with $\deg(F)|\deg(r)$ such that \begin{equation*}
    F^{Q_G}=(\prod G\ast r)^k.
\end{equation*} In words: The $Q_G$-transform of an irreducible monic polynomial is a power of an orbit polynomial.
\end{theorem}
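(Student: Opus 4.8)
The plan is to combine the abstract factorization of $G$-invariant polynomials with the rigidity coming from the bijection $\psi$ of Theorem \ref{bij}. By Lemma \ref{qginv}, $F^{Q_G}\in\mathcal{NR}_K^G$ and $F^{Q_G}$ is $G$-invariant, so Corollary \ref{fac1} already writes it as a product $\prod_i(\prod(G\ast r_i))^{n_i}$ of powers of orbit polynomials with distinct $r_i\in\mathcal{I}_K^G$. Hence the whole theorem reduces to two claims: first, that only a single orbit occurs (so the index set has one element and the exponents collapse to one integer $k$), and second, that $\deg(F)\mid\deg(r)$ for the resulting irreducible factor $r$. Item 3 of Corollary \ref{monoid} guarantees that all members of one orbit occur with the same multiplicity, so once the single-orbit claim is established the clean form $(\prod G\ast r)^k$ follows.

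For the single-orbit claim I would argue on roots. Since $g$ is monic of degree $|G|>\deg(h)$ and $\gcd(g,h)=1$, the roots of $F^{Q_G}$ in $\overline{K}$ are exactly the $\alpha$ with $Q_G(\alpha)\in R_F$, i.e. $R_{F^{Q_G}}=Q_G^{-1}(R_F)$. Let $r,s$ be two irreducible monic factors with roots $\alpha\in R_r$ and $\gamma\in R_s$; then $Q_G(\alpha)$ and $Q_G(\gamma)$ both lie in $R_F$. Because $F$ is irreducible over $K$, there is a $K$-embedding $\sigma\in\operatorname{hom}_K(\overline{K})$ with $\sigma(Q_G(\alpha))=Q_G(\gamma)$; as $Q_G\in K(x)$ has coefficients fixed by $\sigma$, this reads $Q_G(\sigma\alpha)=Q_G(\gamma)$. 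By Theorem \ref{bij} the fibre $Q_G^{-1}(Q_G(\gamma))$ is a single $G$-orbit, so $\gamma=[B]\circ(\sigma\alpha)$ for some $[B]\in G$. Now $\sigma\alpha$ is again a root of $r$ (as $r\in K[x]$), and the identity $R_{[B^{-1}]\ast r}=[B]\circ R_r$ from the proof of Corollary \ref{monoid} shows that $\gamma$ is a root of the irreducible polynomial $[B^{-1}]\ast r$. Hence $s=[B^{-1}]\ast r\in G\ast r$, proving that every irreducible factor of $F^{Q_G}$ lies in $G\ast r$.

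The degree statement is then short. Fix a root $\alpha$ of $r$ and put $\beta=Q_G(\alpha)\in R_F$. Since $\beta=g(\alpha)/h(\alpha)\in K(\alpha)$, we have the tower $K\subseteq K(\beta)\subseteq K(\alpha)$, so $\deg(F)=[K(\beta):K]$ divides $[K(\alpha):K]=\deg(r)$.

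The step I expect to be most delicate is the use of the embedding $\sigma$ over an arbitrary, possibly non-perfect and non-normal field: one must know that $\operatorname{hom}_K(\overline{K})$ still acts transitively on the roots of the irreducible polynomial $F$, which holds because the $K$-isomorphism $K(Q_G(\alpha))\to K(Q_G(\gamma))$ sending one root to the other extends to $\overline{K}$ even when $F$ is inseparable. I would stress that this argument only produces \emph{some} exponent $k\ge 1$; the genuinely hard part of the full Main Theorem, namely forcing $k=1$ for all but finitely many $F$ in non-perfect fields, is separate and not needed here.
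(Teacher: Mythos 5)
Your proof is correct and follows essentially the same route as the paper: both reduce the statement to showing that any two irreducible factors lie in one $G$-orbit by using transitivity of $K$-embeddings on the roots of the irreducible $F$, the fact that such embeddings commute with $Q_G$, and Theorem \ref{bij} to identify each fibre $Q_G^{-1}(\beta)$ with a single $G$-orbit. The only cosmetic difference is that the paper phrases the extension step via automorphisms of the splitting fields $L_F\subseteq L_{F^{Q_G}}$ rather than embeddings of $\overline{K}$, and your remark that transitivity survives inseparability is exactly the point the paper handles by citing the extension theorems in \cite{RomanS}.
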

\begin{proof}
    We know that $F^{Q_G}\in\NR_K^G$ is $G$-invariant by Lemma \ref{qginv} and therefore all its irreducible factors are contained in $\mathcal{I}_K^G$. First we prove the degree condition for the irreducible factors of $F^{Q_G}$. For that let $r\in\mathcal{I}_K^G$ be an arbitrary irreducible factor of $F^{Q_G}$ and $v\in \overline{K}$ a root of $r$, then $$0= F^{Q_G}(v)=h(v)^{\deg(F)}F(Q_G(v))$$ and $h(v)\neq 0$ because $r\in\NR_K^G$. Thus $F(Q_G(v))=0$ which shows that $Q_G(v)=\alpha\in R_F$ and with that $K(Q_G(v))\subseteq K(v)$. We conclude \begin{equation*}
    \deg(r)=[K(v):K]=[K(v):K(\alpha)]\cdot [K(\alpha):K]=[K(v):K(\alpha)]\cdot \deg(F).
\end{equation*} For the rest note that $G\ast r$ divides $F^{Q_G}$ by Corollary \ref{monoid}. So our goal now is to show that every irreducible factor of $F^{Q_G}$ belongs to $G\ast r$. With Lemma \ref{stich} we know that the set of roots of $F^{Q_G}$ can be partitioned into $G$-orbits under the M\"obius-Transformation, i.e. there exist $v_1,\ldots, v_l\in R_{F^{Q_G}}$ such that $$R_{F^{Q_G}}=\bigcup\limits_{i=1}^l(G\circ v_i)$$ and $(G\circ v_i)\cap (G\circ v_j)=\varnothing$ for $i\neq j$. We set w.l.o.g. $v=v_1$. By Theorem \ref{bij} we know that there are $\alpha_1,\ldots,\alpha_l\in\overline{K}$ such that $Q_G(G\circ v_i)=\alpha_i$. Note that $R_F=\{\alpha_1,\ldots,\alpha_l\}$. Now consider the splitting fields $L_F,L_{F^{Q_G}}$ of $F$ and $F^{Q_G}$ over $K$. The extensions $L_F/K,L_{F^{Q_G}}/K$ and $L_{F^{Q_G}}/L_F$ are normal and finite. It can be shown that for all $\alpha_i,\alpha_j\in R_F$ there is $\sigma_{i,j}\in \operatorname{hom}_K(L_F)=\operatorname{Aut_K}(L_F)$ such that $\sigma_{i,j}(\alpha_i)=\alpha_j$ because $F$ is irreducible (for reference see \cite[Theorem 2.8.3]{RomanS} for example). Now let $\beta\in R_F$ be arbitrary, then there is $\sigma_{\beta}\in\operatorname{Aut}_K(L_F)$ such that $\sigma_{\beta}(\alpha_1)=\beta$. The automorphism $\sigma_{\beta}$ can be extended to an automorphism in $\operatorname{Aut}_K(L_{F^{Q_G}})$, we denote it by $\overline{\sigma}_{\beta}$ (for reference see \cite[Theorem 2.8.4]{RomanS}).
Finally, we put everything together: Let $w\in R_{F^{Q_G}}$ and $Q_G(w)=\gamma\in R_F$, then \begin{align*}
    Q_G(w)=\gamma=\overline{\sigma}_{\gamma}(\alpha)=\overline{\sigma}_{\gamma}(Q_G(v))=Q_G(\overline{\sigma}_{\gamma}(v)),
\end{align*} so $w$ and $\overline{\sigma}_{\gamma}(v)$ are contained in the same $G$-orbit. We just showed that every $G$-orbit in $R_{F^{Q_G}}$ contains at least one root of $r$, since $\sigma(v)$ is always a root of $r$ for all $\sigma\in\operatorname{Aut}_K(L_{F^{Q_G}})$. To finish the proof let $t\in\mathcal{I}_K^G$ be an arbitrary irreducible factor of $F^{Q_G}$ and $w$ a root of $t$, then there is $[A]\in G$ and $v\in R_r$ such that $[A]^{-1}\circ v=w$, thus $t=[A]\ast r$.
\end{proof}
\begin{remark}
This theorem still holds for arbitrary generators $Q=\frac{g}{h}$ of $K(x)^G$ if $F\in \mathcal{I}_K$ satisfies $F(Q(\infty))\neq 0$ because of Corollary \ref{arbgen} with proof. Notice that $Q(\infty)\in K\cup\{\infty\}$, thus for $\deg(F)\ge 2$ it always holds. But $F^Q$ is not guaranteed to be monic, so we have to normalize it on occasion.
\end{remark} What is left to show is that $k=1$ for all but finitely many $F\in\mathcal{I}_K$. The next corollary is very helpful: \begin{corollary}\label{size}
Let $F\in\mathcal{I}_K$, then every $G$-orbit in $R_{F^{Q_G}}$ is of the same size. So for $v\in R_{F^{Q_G}}$ we obtain: \begin{equation*}
    |R_{F^{Q_G}}|=|R_F|\cdot |G\circ v|
\end{equation*}
\end{corollary}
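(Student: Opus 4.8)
The plan is to exploit that every $K$-automorphism of the splitting field $L_{F^{Q_G}}$ respects both the quotient map $Q_G$ and the Möbius action of $G$, since all of these are defined over $K$. I would start from the orbit decomposition already established in the proof of Theorem \ref{main1}: writing $R_F=\{\alpha_1,\dots,\alpha_l\}$, so that $l=|R_F|$ counts the \emph{distinct} roots of $F$, and $R_{F^{Q_G}}=\bigsqcup_{i=1}^l (G\circ v_i)$ with $Q_G(G\circ v_i)=\alpha_i$, where by Theorem \ref{bij} the orbit $G\circ v_i$ is precisely the fibre $Q_G^{-1}(\alpha_i)$ and these orbits are pairwise distinct because the $\alpha_i$ are. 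The goal then reduces to proving that $|G\circ v_i|=|G\circ v_j|$ for all $i,j$.

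The key observation I would isolate first is that any $\overline\sigma\in\operatorname{Aut}_K(L_{F^{Q_G}})$ commutes with the Möbius action: since the entries of every $[A]\in G$ can be chosen in $K$, one has $\overline\sigma([A]\circ w)=[A]\circ\overline\sigma(w)$ for $w\in R_{F^{Q_G}}$, and likewise $Q_G(\overline\sigma(w))=\overline\sigma(Q_G(w))$ because $g,h\in K[x]$ (here $h(w)\neq 0$ as $w\notin G\circ\infty$). Consequently $\overline\sigma$ maps each $G$-orbit $G\circ v_i$ \emph{bijectively} onto the $G$-orbit $G\circ\overline\sigma(v_i)$, so in particular $|G\circ v_i|=|G\circ\overline\sigma(v_i)|$.

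To connect any two orbits I would use transitivity. Fix $i,j$. Because $F$ is irreducible, $\operatorname{Aut}_K(L_F)$ acts transitively on $R_F$, so there is $\sigma\in\operatorname{Aut}_K(L_F)$ with $\sigma(\alpha_i)=\alpha_j$; extending it to $\overline\sigma\in\operatorname{Aut}_K(L_{F^{Q_G}})$ exactly as in the proof of Theorem \ref{main1} gives $Q_G(\overline\sigma(v_i))=\overline\sigma(Q_G(v_i))=\overline\sigma(\alpha_i)=\alpha_j$, whence $\overline\sigma(v_i)\in Q_G^{-1}(\alpha_j)=G\circ v_j$ by Theorem \ref{bij}. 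Therefore $G\circ\overline\sigma(v_i)=G\circ v_j$, and the previous paragraph yields $|G\circ v_i|=|G\circ v_j|$. Summing over the $l=|R_F|$ orbits, all of the common size $|G\circ v|$, then gives $|R_{F^{Q_G}}|=\sum_{i=1}^l|G\circ v_i|=|R_F|\cdot|G\circ v|$.

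I expect the only delicate point to be the compatibility statements $\overline\sigma([A]\circ w)=[A]\circ\overline\sigma(w)$ and $Q_G\circ\overline\sigma=\overline\sigma\circ Q_G$; once these are checked, it is precisely the $K$-rationality of $G$ and of $Q_G$ that forces the automorphism to permute the $G$-orbits while preserving their sizes, and transitivity does the rest. Note that nothing here requires $F$ or $F^{Q_G}$ to be separable, since the entire count is phrased in terms of distinct roots.
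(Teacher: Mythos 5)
Your proposal is correct and follows essentially the same route as the paper: extend an automorphism of $L_F$ sending $\alpha_i$ to $\alpha_j$ to an automorphism of $L_{F^{Q_G}}$, use its compatibility with the $K$-rational Möbius action and with $Q_G$ to see that it carries the fibre $Q_G^{-1}(\alpha_i)=G\circ v_i$ bijectively onto $Q_G^{-1}(\alpha_j)$, and then sum over the fibres. The paper phrases the orbit comparison as the two inclusions $\overline{\sigma}_{\beta}(G\circ v)\subseteq G\circ w_{\beta}$ and its reverse via $\overline{\sigma}_{\beta}^{-1}$, but this is the same argument.
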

\begin{proof}
    Let $v\in R_{F^{Q_G}}$ be such that $Q_G(G\circ v)=\alpha\in R_F$. Additionally, for $\beta\in R_F$ let $\overline{\sigma}_{\beta}:L_{F^{Q_G}}\to L_{F^{Q_G}}$ be an automorphism of $L_{F^{Q_G}}$ such that $\overline{\sigma}_{\beta}(\alpha)=\beta$ as in the proof of Theorem \ref{main1}. Moreover let $w_{\beta}$ be a root of $F^{Q_G}$ such that $Q_G(w_{\beta})=\beta$. We have $$\overline{\sigma}_{\beta}(G\circ v)\subseteq G\circ w_{\beta}$$ and since $\overline{\sigma}_{\beta}^{-1}\in\operatorname{Aut}_K(L_{F^{Q_G}})$ with $\overline{\sigma}_{\beta}^{-1}(\beta)=\alpha$ also $G\circ w_{\beta}\subseteq \overline{\sigma}_{\beta}(G\circ v)$. Hence $G\circ w_{\beta}= \overline{\sigma}_{\beta}(G\circ v)$ and $|G\circ v|=|\overline{\sigma}_{\beta}(G\circ v)|$ since $\overline{\sigma}_{\beta}$ is bijective on $L_{F^{Q_G}}$. Thus we obtain \begin{align*}
     |R_{F^{Q_G}}|=|\bigcup\limits_{\beta\in R_F}Q_G^{-1}(\beta)|=\sum\limits_{\beta\in R_F}|Q_G^{-1}(\beta)|=|R_F|\cdot |Q_G^{-1}(\alpha)|=|R_F|\cdot |G\circ v|.
\end{align*} 
\end{proof} It follows that if $K$ is perfect, then $F^{Q_G}$ is separable if and only if $G\circ v$ is regular, because then $$|R_{F^{Q_G}}|=|R_F|\cdot |G\circ v|=\deg(F)\cdot |G|=\deg(F^{Q_G}).$$ 
Later we will see that there are only finitely many non-regular $G$-orbits in $\K$ and consequentially there are only finitely many $F\in\mathcal{I}_K$ such that $F^{Q_G}$ is a proper power of an orbit polynomial. But before we do that we want to show that $G\circ v$ is regular for a root of $F^{Q_G}$ implies that $F^{Q_G}$ is a $G$-orbit polynomial and not a proper power thereof holds over every field.
\subsection{Proof of the Second Part of the Main Theorem, Theorem R and Theorem \ref{arbginv}}
Let $L/K$ be a finite field extension. The separable degree of $L$ over $K$ is defined as $$[L:K]_s:=|\operatorname{hom}_K(L)|.$$ Recall that it behaves in the same way as the degree of field extensions, that is, for $M/L/K$ we have $$[M:K]_s=[M:L]_s\cdot [L:K]_s.$$ If $K$ is perfect, then $[L:K]_s=[L:K]$ for every finite field extension $L$ of $K$. Now let $\operatorname{char}(K)=p>0$ and $r\in\mathcal{I}_K$ with root $v\in R_r$. Then there is a natural number $d$ such that $$[K(v):K]=p^d\cdot [K(v):K]_s,$$ this $d$ is called the \textit{radical exponent} of $r$ or $v$ over $K$. It can be shown that the radical exponent of $r$ is the smallest positive integer such that there is an irreducible and separable polynomial $s\in K[x]$ such that $r(x)=s(x^{p^d}).$ For a nice reference on this topic see \cite{RomanS}. For the sake of convenience we use the notation $$\operatorname{rad}(r):=p^d=\frac{[K(v):K]}{[K(v):K)]_s}$$ for $r\in\mathcal{I}_K$ with radical exponent $d$ and $v\in R_r$. 

The essential part of the proof is to show that $\operatorname{rad}(F)=\operatorname{rad}(r)$ for all irreducible factors $r\in\mathcal{I}_K^G$ of $F^{Q_G}$. So let $F\in\mathcal{I}_K$, $r\in\mathcal{I}_K^G$ an irreducible factor of $F^{Q_G}$ and $v\in R_r$ a root of $r$ with $Q_G(v)=\alpha\in R_F$, then \begin{align*}
    \operatorname{rad}(r)&=\frac{[K(v):K]}{[K(v):K]_s}=\frac{[K(v):K(\alpha)]}{[K(v):K(\alpha)]_s}\cdot \frac{[K(\alpha):K]}{[K(\alpha):K]_s}\\&=\frac{[K(v):K(\alpha)]}{[K(v):K(\alpha)]_s}\cdot \operatorname{rad}(F),
\end{align*} so $\operatorname{rad}(F)\le \operatorname{rad}(r)$. For $\operatorname{rad}(F)\ge \operatorname{rad}(r)$ we need to work a bit.
\begin{lemma}\label{Sxq}
Let $F\in\mathcal{I}_K$ be such that $|G\circ v|=|G|$ for $v\in R_{F^{Q_G}}$ and $\operatorname{char}(K)=p>0$. Further $F(x)=H(x^q)$ for $q$ a power of $p$ and $H\in\mathcal{I}_K$ also separable, thus $\operatorname{rad}(F)=q$. Then there is a separable polynomial $S\in K[x]$ such that \begin{equation*}
    F^{Q_G}(x)=S(x^q).
\end{equation*}
\end{lemma}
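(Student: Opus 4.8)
The plan is to exploit the hypothesis $F(x)=H(x^q)$ directly. Since $H$ is separable and irreducible of degree $n:=\deg(H)=\deg(F)/q$, writing $H(y)=\sum_{i=0}^n b_iy^i$ gives $F(y)=\sum_{i=0}^n b_iy^{qi}$. First I would simply expand the transform. Because $\deg(F)=qn$,
\[
F^{Q_G}(x)=h(x)^{qn}\sum_{i=0}^n b_i\Big(\tfrac{g(x)}{h(x)}\Big)^{qi}=\sum_{i=0}^n b_i\,\big(g(x)^i h(x)^{n-i}\big)^q .
\]
Now each summand is a $q$-th power of a polynomial in $K[x]$, and by the Frobenius identity $\big(\sum_j c_j x^j\big)^q=\sum_j c_j^q x^{qj}$ every such $q$-th power is itself a polynomial in $x^q$. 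Hence $F^{Q_G}(x)=S(x^q)$ for an explicit $S\in K[x]$, namely the $b_i$-weighted sum of the coefficient-wise $q$-th-power twists of the $g^i h^{n-i}$. So the \emph{existence} of $S$ is automatic from $F$ being a polynomial in $x^q$; the whole content of the lemma is that this $S$ is separable.

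To prove separability I would count roots. A polynomial is separable exactly when it has no repeated roots, so it suffices to show $|R_S|=\deg(S)$. Comparing degrees in $F^{Q_G}=S(x^q)$ and using $\deg(F^{Q_G})=\deg(F)\cdot|G|=qn|G|$ (Lemma \ref{basicqg}, valid since $Q_G(\infty)=\infty$ and $F(\infty)\neq 0$) yields $\deg(S)=n|G|$. For the number of roots, note that the Frobenius map $w\mapsto w^q$ is a bijection of $\overline{K}$ carrying $R_{F^{Q_G}}$ onto $R_S$: indeed $F^{Q_G}(w)=S(w^q)$, so $w\in R_{F^{Q_G}}\iff w^q\in R_S$, and each element of $R_S$ has a unique $q$-th root. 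Therefore $|R_S|=|R_{F^{Q_G}}|$, and it remains to evaluate the latter.

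This is where the regular-orbit hypothesis enters. By Corollary \ref{size}, $|R_{F^{Q_G}}|=|R_F|\cdot|G\circ v|$. Since $\operatorname{rad}(F)=q$, the number of distinct roots of $F$ equals the separable degree $[K(w):K]_s=\deg(F)/q=n$, i.e.\ $|R_F|=n$; and the assumption $|G\circ v|=|G|$ gives $|R_{F^{Q_G}}|=n|G|=\deg(S)$. Combining, $|R_S|=\deg(S)$, so $S$ is separable, which finishes the argument.

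I expect the genuinely delicate point to be the root count rather than the algebra: separability of $S$ fails precisely when the orbit $G\circ v$ is non-regular, since then $|R_{F^{Q_G}}|=n|G\circ v|<n|G|=\deg(S)$ forces repeated roots in $S$. Thus the regularity hypothesis is not a convenience but exactly the condition making the statement true, and the proof is really a bookkeeping of distinct roots via Corollary \ref{size} together with the Frobenius bijection; the characteristic-$p$ manipulations (freshman's dream, the $q$-th-power twist of coefficients) are routine and, notably, do not require $K$ to be perfect.
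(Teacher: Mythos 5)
Your proposal is correct and follows essentially the same route as the paper's own proof: both expand $F^{Q_G}$ via the Frobenius identity to exhibit it as a polynomial in $x^q$, then use the bijectivity of $w\mapsto w^q$ on $\overline{K}$ to identify $|R_S|$ with $|R_{F^{Q_G}}|$, and finally invoke Corollary \ref{size} together with the regular-orbit hypothesis to get $|R_S|=\deg(S)$. Your closing remark that regularity is exactly the condition making the count work matches the role it plays in the paper's argument.
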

\begin{proof}
This proof has two parts. At first we show that we can write $F^{Q_G}$ as $S(x^q)$ and afterwards show that the polynomial $S$ is separable. The first part is a calculation exercise. For that let $Q_G=\frac{g}{h}$ and $H=\sum_{i=0}^n a_i x^i$. Additionally, for an arbitrary polynomial $P:=\sum_{i=0}^m c_ix^i$ we define \begin{equation*}
    P^{(q)}:=\sum\limits_{i=0}^m c_i^q x^i.
\end{equation*} Observe that $P(x)^q=P^{(q)}(x^q)$ for $q$ a power of $\operatorname{char}(K)=p$. With that we obtain the following: \begin{align*}
    F^{Q_G}(x)&=h(x)^{\deg(F)}F(\frac{g(x)}{h(x)})=h(x)^{q\deg(H)}H(\frac{g(x)^q}{h(x)^q})\\&=\sum\limits_{i=0}^n a_i g(x)^{iq}h(x)^{(n-i)q}=\sum\limits_{i=0}^n a_i g^{(q)}(x^q)^{i}h^{(q)}(x^q)^{(n-i)}
\end{align*} Since $K[x^q]\subseteq K[x]$ is a subring there exists a polynomial $S$ such that $F^{Q_G}(x)=S(x^q)$, which is exactly what we wanted. The polynomial $S$ is of degree $\deg(S)=\deg(F^{Q_G})/q=|G|\cdot \deg(H)$. For every $v\in R_{F^{Q_G}}$ holds $v^q\in R_S$. The map $y\mapsto y^q$ is bijective on $\overline{K}$, thus $\rho:R_{F^{Q_G}}\to R_S$ with $ v\mapsto v^q$ is injective and therefore $|R_{F^{Q_G}}|\le |R_S|$. Conversely, for $\alpha\in R_S$ the $q$-th root $\alpha^{1/q}$ is a root of $F^{Q_G}$, because $F(\alpha^{1/q})=S((\alpha^{1/q})^q)=S(\alpha)=0$. This shows that $\rho$ is actually a bijection and $|R_S|=|R_{F^{Q_G}}|$. We finish this proof by applying Corollary \ref{size} and using $\deg(H)=|R_F|$ as well as our assumption that $|G|=|G\circ v|$: \begin{equation*}
    |R_S|=|R_{F^{Q_G}}|=|R_F|\cdot |G\circ v|=\deg(H)\cdot |G|=\deg(S)
\end{equation*} So $S$ is separable because it has $\deg(S)$ many roots in $\overline{K}$.
\end{proof}
If $S\in K[x]$ is the separable polynomial such that $F^{Q_G}=S(x^q)$ as in the lemma above, then $S$ factorizes into separable irreducible factors $S=s_1\cdot\ldots\cdot s_l$. Hence $S(x^q)=s_1(x^q)\cdot\ldots\cdot s_l(x^q)$, so it should be beneficial to study the factorization of polynomials of the form $s(x^q)$ for $s\in\mathcal{I}_K$ irreducible and separable. The next lemma shows that such polynomials consist of only one irreducible factor. We give a proof of this lemma as it is a crucial tool for the following theorem. In the proof we employ a similar method as in the proof of Theorem \ref{main1}. We want to point out that there is no finite subgroup $G\le\PG(K)$ with $x^q\in K(x)$ as a quotient map for $q$ a power of $\operatorname{char}(K)$, so this is not a particular case of Theorem \ref{main1}. 
\begin{lemma}\label{facq} Let $s\in K[x]$ be an irreducible, separable and monic polynomial. Furthermore let $\operatorname{char}(K)=p>0$ and $q=p^d$ for $d>0$. Then there is an irreducible,separable and monic polynomial $f\in K[x]$ with $\deg(f)=\deg(s)$ and $a,b\in\mathbb{N}$ with $a+b=d$ such that \begin{equation*}
    s(x^q)=(f(x^{p^a}))^{p^b}
\end{equation*} and $f(x^{p^a})$ is irreducible.
\end{lemma}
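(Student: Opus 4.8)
The plan is to work entirely with roots in $\overline K$. Set $n:=\deg(s)$ and let $\alpha_1,\dots,\alpha_n\in\overline K$ be the roots of $s$, which are distinct since $s$ is separable. Because the Frobenius map $y\mapsto y^q$ is a bijection of $\overline K$, each $\alpha_i$ has a unique $q$-th root $\beta_i\in\overline K$, the $\beta_i$ are pairwise distinct, and $x^q-\alpha_i=(x-\beta_i)^q$ in characteristic $p$. Hence $s(x^q)=\prod_{i=1}^n(x-\beta_i)^q$, so the distinct roots of $s(x^q)$ are exactly $\beta_1,\dots,\beta_n$, each of multiplicity $q=p^d$.

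Next I would single out one irreducible factor and compute its degree. Let $R\in\mathcal I_K$ be the minimal polynomial of $\beta:=\beta_1$ over $K$; it is an irreducible factor of $s(x^q)$. Writing $\alpha:=\alpha_1=\beta^q$, consider the tower $K\subseteq K(\alpha)\subseteq K(\beta)$. Here $[K(\alpha):K]=n$ is separable (as $s$ is), while $K(\beta)/K(\alpha)$ is purely inseparable, since $\beta$ is a root of $x^q-\alpha=(x-\beta)^q\in K(\alpha)[x]$; its degree is thus $p^a$ for some $a$ with $0\le a\le d$ (because $\beta^{p^d}=\alpha\in K(\alpha)$). Therefore $\deg(R)=n\,p^a$, $[K(\beta):K]_s=n$, and $\operatorname{rad}(R)=p^a$. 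By the radical-exponent description recalled before the lemma (see \cite{RomanS}), there is an irreducible, separable, monic $f\in K[x]$ with $R=f(x^{p^a})$ and $\deg(f)=\deg(R)/p^a=n=\deg(s)$; in particular $f(x^{p^a})=R$ is irreducible.

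The hard part will be to show that $R$ already accounts for all distinct roots of $s(x^q)$, i.e. that every $\beta_i$ is a root of $R$. Here I would argue by automorphisms, exactly as in the proof of Theorem \ref{main1}: since $s$ is irreducible there is $\sigma_i\in\operatorname{Aut}_K(L_s)$ with $\sigma_i(\alpha_1)=\alpha_i$, which extends to some $\overline\sigma_i\in\operatorname{Aut}_K(L_{s(x^q)})$. Applying $\overline\sigma_i$ to $\beta_1^q=\alpha_1$ yields $\overline\sigma_i(\beta_1)^q=\alpha_i$, and by uniqueness of $q$-th roots $\overline\sigma_i(\beta_1)=\beta_i$. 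Thus each $\beta_i$ is a conjugate of $\beta=\beta_1$ and hence a root of $R$; combined with $R\mid s(x^q)$, this shows that the distinct roots of $R$ are precisely $\{\beta_1,\dots,\beta_n\}$.

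Finally I would match multiplicities. Put $b:=d-a\ge 0$, so $a+b=d$. As $R$ is irreducible, each of its roots occurs with the common multiplicity $\operatorname{rad}(R)=p^a$, so $R^{p^b}$ has precisely the roots $\beta_1,\dots,\beta_n$, each of multiplicity $p^{a+b}=p^d=q$ — the same roots and multiplicities as $s(x^q)$. Since both polynomials are monic, they are equal, giving $s(x^q)=R^{p^b}=(f(x^{p^a}))^{p^b}$ with $f(x^{p^a})$ irreducible, which is the claim. The counting of degrees and multiplicities and the appeal to radical-exponent theory are routine; the single genuine step is the automorphism argument identifying all the $\beta_i$ as conjugates of $\beta_1$.
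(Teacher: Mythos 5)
Your proof is correct and takes essentially the same route as the paper: the heart of both arguments is extending automorphisms of $L_s$ to the splitting field of $s(x^q)$ and using injectivity of $y\mapsto y^q$ to conclude that all roots of $s(x^q)$ are conjugate, so there is a single irreducible factor. The only difference is cosmetic and comes at the end, where the paper identifies $f$ by explicitly computing the minimal polynomial $x^{p^a}-\alpha^{1/p^b}$ of a root over $K(\alpha)$ and taking $f$ to be the minimal polynomial of $\alpha^{1/p^b}$ over $K$ (then comparing degrees), while you invoke the radical-exponent description $R=f(x^{p^a})$ and match root multiplicities; both are valid and produce the same $f$.
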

\begin{proof}
At first we show that $s(x^q)$ only has one irreducible factor. Since $s$ is both irreducible and separable, $L_s/K$ is a Galois extension. If we set $P(x):=s(x^q)$ and consider the splitting field $L_P/K$ of $P$, then, with similar arguments as in the proof of Theorem \ref{main1}, we can extend every $\sigma\in\operatorname{Gal}(L_s/K)$ to a $K$-homomorphism $\overline{\sigma}\in\operatorname{hom}_K(L_P)$. Since splitting fields are normal, every such $K$-homomorphism is actually an automorphism on $L_P$. Let $F\in\mathcal{I}_K$ be an irreducible factor of $P$ and $v\in R_F$ one of its roots. Then $v^q=:\alpha$ is a root of $s$ and similarly $w^q=:\beta\in R_s$ for $w\in R_P$. Let $\overline{\sigma}\in\operatorname{hom}(L_P)$ be the extension of the homomorphism $\sigma\in\operatorname{Gal}(L_s/K)$ with $\sigma(\alpha)=\beta$, then \begin{equation*}
    w^q=\beta=\overline{\sigma}(\alpha)=\overline{\sigma}(v^q)=(\overline{\sigma}(v))^q.
\end{equation*} As $y\mapsto y^q$ is injective on $\overline{K}$ we obtain that $w=\overline{\sigma}(v)$. Therefore $w$ has to be a root of $F\in K[x]$ as well, since $\overline{\sigma}$ is an automorphism of $L_P$ that fixes $K$. So we just showed that all roots of $P$ are also roots of the irreducible factor $F$, thus $P=F^k$ for $k\in\mathbb{N}$. With the degree formula for field extensions we get \begin{equation*}
    \deg(F)=[K(v):K]=[K(v):K(\alpha)]\cdot[K(\alpha):K]=[K(v):K(\alpha)]\cdot \deg(s),
\end{equation*} which shows $\deg(s)|\deg(F)$. Together with the fact that $k\cdot \deg(F)=\deg(P)=q\cdot \deg(s)$ we get $\deg(F)=p^a\cdot \deg(s)$ for an $a\in\{0,\ldots,d\}$ and $k=p^b$ such that $a+b=d$. Moreover $p^a=[K(v):K(\alpha)]$, which is the degree of the minimal polynomial $m_v\in K(\alpha)[x]$ of $v$ in $K(\alpha)$. Since $v^q=\alpha$, it is also a root of $x^q-\alpha$ and therefore $m_v|x^q-\alpha$. A simple calculation shows that $m_v=x^{p^a}-\alpha^{\frac{1}{p^b}}$, so $\alpha^{\frac{1}{p^b}}\in K(\alpha)$. Conversely $\alpha\in K(\alpha^{\frac{1}{p^b}})$, since $\alpha$ is the $p^b$-th power of $\alpha^{\frac{1}{p^b}}$, hence $K(\alpha)=K(\alpha^{\frac{1}{p^b}})$. Let $f\in \mathcal{I}_K$ be the minimal polynomial of $\alpha^{\frac{1}{p^b}}$ in $K[x]$. We just showed that $\deg(f)=[K(\alpha):K]=\deg(s)$. All that shows that $v$ is also root of the monic polynomial $F':=f(x^{p^a})\in K[x]$. Since $F$ is the minimal polynomial of $v$ over $K$ it has to divide $F'$. Moreover, $F$ and $F'$ have the same degree and are monic, thus have to be equal.
\end{proof}
This is enough to prove that $k=1$ if $G\circ v$ is regular for $v\in R_{F^{Q_G}}$: \begin{theorem}\label{k=1}
Let $F\in \mathcal{I}_K$ be such that $|G\circ v|=|G|$ for $v\in R_{F^{Q_G}}$. Then $F^{Q_G}=\prod(G\ast r)$, i.e. it is an orbit polynomial.
\end{theorem}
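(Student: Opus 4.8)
The goal is to upgrade Theorem \ref{main1}, which already delivers a factorization $F^{Q_G}=(\prod G\ast r)^k$ for some irreducible factor $r\in\mathcal{I}_K^G$ and some integer $k\ge 1$, into the assertion that $k=1$ under the regularity hypothesis $|G\circ v|=|G|$. The plan is to split according to separability: first dispose of the perfect (and characteristic zero) case by a direct root count, and then treat the genuinely inseparable situation using the two preparatory lemmas.

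First I would handle $\operatorname{char}(K)=0$ or $K$ perfect. Here Corollary \ref{size} combined with regularity gives $|R_{F^{Q_G}}|=|R_F|\cdot|G\circ v|=\deg(F)\cdot|G|=\deg(F^{Q_G})$, so $F^{Q_G}$ has as many distinct roots as its degree and is therefore separable. A separable polynomial is squarefree, so it cannot equal $(\prod G\ast r)^k$ with $k\ge 2$ (the orbit factors being nonconstant would produce repeated roots), which forces $k=1$.

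Now suppose $\operatorname{char}(K)=p>0$. The computation preceding Lemma \ref{Sxq} already yields $\operatorname{rad}(F)\le\operatorname{rad}(r)$ for every irreducible factor $r$ of $F^{Q_G}$, so the entire difficulty is the reverse estimate, and this is precisely what Lemmas \ref{Sxq} and \ref{facq} are built to supply. Writing $q=\operatorname{rad}(F)=p^{d}$ and $F(x)=H(x^{q})$ with $H$ separable and irreducible, the regularity hypothesis lets me invoke Lemma \ref{Sxq} to produce a separable $S$ with $F^{Q_G}(x)=S(x^{q})$. Factoring $S=s_1\cdots s_l$ into distinct separable irreducibles gives $F^{Q_G}=\prod_{i=1}^{l}s_i(x^{q})$, and Lemma \ref{facq} rewrites each factor as $s_i(x^{q})=\bigl(f_i(x^{p^{a_i}})\bigr)^{p^{b_i}}$ with $f_i(x^{p^{a_i}})$ irreducible and $a_i+b_i=d$.

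Finally I would read off $k$. The polynomials $f_i(x^{p^{a_i}})$ have pairwise disjoint root sets, since a root $w$ of $f_i(x^{p^{a_i}})$ satisfies $w^{q}\in R_{s_i}$ and the $s_i$ are coprime; hence they are exactly the distinct irreducible factors of $F^{Q_G}$, each occurring with multiplicity $p^{b_i}$ and having radical exponent $p^{a_i}$. Our fixed factor $r$ therefore equals $f_i(x^{p^{a_i}})$ for a unique index $i$, so $\operatorname{rad}(r)=p^{a_i}$, while matching the multiplicity of $r$ in the two factorizations of $F^{Q_G}$ gives $k=p^{b_i}$. Combining $\operatorname{rad}(F)=p^{d}\le\operatorname{rad}(r)=p^{a_i}$ (so $d\le a_i$) with the constraint $a_i+b_i=d$ and $b_i\ge 0$ (so $a_i\le d$) forces $a_i=d$ and $b_i=0$, that is $k=1$, which is the claim. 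The main obstacle lies entirely in the inseparable bookkeeping: producing the clean normal form $F^{Q_G}=S(x^{q})$ with $S$ separable, and then identifying the single irreducible factor of each $s_i(x^{q})$ together with its radical exponent and multiplicity. Once those two lemmas are in hand, the conclusion reduces to the degree-and-multiplicity comparison described above.
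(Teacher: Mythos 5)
Your proposal is correct and follows essentially the same route as the paper: a root count via Corollary \ref{size} when $F$ is separable, and in the inseparable case the reduction $F^{Q_G}=S(x^q)$ from Lemma \ref{Sxq}, the factor-by-factor identification via Lemma \ref{facq}, and the final comparison of radical exponents and multiplicities against $\operatorname{rad}(F)\le\operatorname{rad}(r)$ to force $b_i=0$. The only cosmetic difference is that the paper splits cases by separability of $F$ rather than perfectness of $K$ (so that a separable $F$ over a non-perfect field is handled by the same root count, since Lemma \ref{facq} needs $d>0$), but your first-paragraph argument applies verbatim there.
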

\begin{proof} If $F$ is separable and $|G\circ v|=|G|$ for a root of $F^{Q_G}$, then all $G$-orbits in $R_{F^{Q_G}}$ are of the same size by Corollary \ref{size} and as a consequence $F^{Q_G}$ is also separable since $\deg(F^{Q_G})=|R_{F^{Q_G}}|$. If $\operatorname{rad}(F)=q>1$ then 
by Lemma \ref{Sxq} \begin{equation*}
    F^{Q_G}(x)=S(x^q)=\prod\limits_{i=1}^l s_i(x^q)=\prod\limits_{i=1}^l (f_i(x^{p^{a_i}}))^{p^{b_i}},
\end{equation*} where $s_i\in\mathcal{I}_K$ are the irreducible and separable factors of $S$ and $f_i\in\mathcal{I}_K$ the separable and irreducible polynomials as in the lemma above, so $a_i+b_i=d$ for $q=p^d$. Observe that $\gcd(f_i(x^{p^{a_i}}),f_j(x^{p^{a_j}}))=1$ for $i\neq j$. The reason for that is that since $S$ is separable, $s_i$ and $s_j$ have to be different irreducible polynomials, i.e. $\gcd(s_i,s_j)=1$ which is equivalent to $R_{s_i}\cap R_{s_j}=\varnothing$. Now, the roots of $s_i(x^q)$ and $s_j(x^q)$ are the preimages of $R_{s_i}$ and $R_{s_j}$ under the map $y\mapsto y^q$ on $\overline{K}$. This map is bijective and therefore also injective, so the preimages are also different and thus $s_i(x^q)$ and $s_j(x^q)$ have no roots in common. This small observation is very important because now, with the help of Theorem \ref{main1}, we can deduce that \begin{align*}
    S(x^q)&=\prod\limits_{i=1}^l (f_i(x^{p^{a_i}}))^{p^{b_i}}\\
    &=(\prod\limits_{t\in G\ast r} t)^k.
\end{align*} The irreducible factors $f_i(x^{p^{a_i}})$ and $t\in G\ast r$ have to coincide with each other because $K[x]$ is a factorial ring. So we obtain with the remarks above (and the fact that all $t_1,t_2\in G\ast r$ with $t_1\neq t_2$ also satisfy $\gcd(t_1,t_2)=1$ since they are irreducible) that for all $t\in G\ast r$ there is exactly one $i\in [l]$ such that $t(x)=f_i(x^{p^{a_i}})$. This also implies $k=p^{b_i}$ for all $i\in [l]$ and thus $p^{a_i}=p^{a_j}$ for all $i,j\in [l]$. To summarize what we could obtain: \begin{align*}
    S(x^q)=\prod\limits_{i=1}^l (f_i(x^{p^{a}}))^{k}=\prod\limits_{t\in G\ast r} t^k=F^{Q_G}(x). 
\end{align*} This shows that $\operatorname{rad}(t)=p^a\le q=\operatorname{rad}(F)$ for all $t\in G\ast r$, thus $\operatorname{rad}(t)=\operatorname{rad}(F)$, since we already explained that $\operatorname{rad}(r)\ge \operatorname{rad}(F)$. Hence $p^a=q$ and $b_i=0$, which shows $k=1$.
\end{proof} Before we finish the proof of the Main Theorem we want to state an immediate consequence of this result \begin{corollary}\label{maincoroll}
Let $F\in \mathcal{I}_K$ be such that $|G\circ v|=|G|$ for a root $v$ of $F^{Q_G}$, then \begin{enumerate}
    \item The degree of every irreducible factor $r$ of $F^{Q_G}$ satisfies \begin{equation*}
        \deg(r)=\frac{|G|}{|G\ast r|}\cdot \deg(F)
    \end{equation*} 
    \item If $\deg(F)<\deg(r)$ for an irreducible factor $r$ of $F^{Q_G}$, then $\{[I_2]\}\neq\operatorname{Stab}_G(r)\le G$ is non-trivial
\end{enumerate}
\end{corollary}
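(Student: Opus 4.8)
The plan is to read both statements off directly from Theorem~\ref{k=1}, which under the present hypothesis $|G\circ v|=|G|$ already gives $F^{Q_G}=\prod(G\ast r)$ for some irreducible factor $r\in\mathcal{I}_K^G$. The corollary is then pure degree bookkeeping combined with the orbit--stabilizer theorem, so no genuinely new obstacle arises; essentially all the content has been delivered by Theorem~\ref{k=1}.

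For the first item I would compute $\deg(F^{Q_G})$ in two ways. On one hand, $\deg(F^{Q_G})=\deg(F)\cdot\deg(Q_G)=\deg(F)\cdot|G|$ by item~1 of Lemma~\ref{basicqg} together with $\deg(Q_G)=|G|$. On the other hand, since $F^{Q_G}=\prod_{t\in G\ast r}t$ and every polynomial in the orbit $G\ast r$ has the same degree $\deg(r)$ as $r$ by item~1 of Lemma~\ref{basic}, the product ranges over the $|G\ast r|$ distinct irreducible factors, whence $\deg(F^{Q_G})=|G\ast r|\cdot\deg(r)$. The only point requiring a moment's care is that the factors in an orbit polynomial are pairwise distinct, so the degrees add without repeated multiplicity; this is automatic because $G\ast r\subset\mathcal{I}_K^G$ is a set of distinct monic irreducible polynomials. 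Equating the two expressions and solving for $\deg(r)$ yields
\begin{equation*}
    \deg(r)=\frac{|G|}{|G\ast r|}\cdot\deg(F),
\end{equation*}
which is the claimed formula.

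For the second item I would invoke the orbit--stabilizer theorem for the right action $\ast$ of $G$ on $\mathcal{NR}_K^G$ (Lemma~\ref{basic}): namely $|G\ast r|=|G|/|\operatorname{Stab}_G(r)|$, so that $\frac{|G|}{|G\ast r|}=|\operatorname{Stab}_G(r)|$. Substituting this into the formula from the first item gives $\deg(r)=|\operatorname{Stab}_G(r)|\cdot\deg(F)$. If now $\deg(F)<\deg(r)$, then $|\operatorname{Stab}_G(r)|=\deg(r)/\deg(F)>1$, so $\operatorname{Stab}_G(r)$ contains an element other than $[I_2]$ and is therefore non-trivial. The main (and only) risk in this step is notational, namely keeping the orbit and its stabilizer on the correct sides of the identity $|G\ast r|\cdot|\operatorname{Stab}_G(r)|=|G|$.
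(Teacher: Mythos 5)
Your proposal is correct and matches the paper's own proof: both items are obtained by equating $\deg(F^{Q_G})=|G|\cdot\deg(F)$ with $|G\ast r|\cdot\deg(r)$ (using Theorem~\ref{k=1} to get $k=1$) and then applying the orbit--stabilizer identity $|G|/|G\ast r|=|\operatorname{Stab}_G(r)|$. No further comment is needed.
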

\begin{proof}
For the first we use Theorem \ref{k=1} (so $k=1$) and calculate: \begin{equation*}
    |G|\cdot \deg(F)=\deg(F^{Q_G})=|G\ast r|\cdot \deg(r).
\end{equation*} The last is obvious because $\deg(r)=\frac{|G|}{|G\ast r|}\cdot\deg(F)>\deg(F)$, so $|\operatorname{Stab}_G(r)|=\frac{|G|}{|G\ast r|}>1$.
\end{proof}
\begin{remark}
Observe that for $Q$ an arbitrary generator of $K(x)^G$ Theorem \ref{k=1} and Corollary \ref{maincoroll} still hold if $F\in\mathcal{I}_K$ and $F(Q(\infty))\neq 0$. The reason for this is again the proof of Corollary \ref{arbgen}: Write 
\begin{equation*}
    Q(x)=[C]\circ Q_G(x)=\frac{aQ_G(x)+b}{cQ_G(x)+d}
\end{equation*} as in Corollary \ref{arbgen}, where $Q_G$ is a quotient map, then there is $H\in\mathcal{I}_K$ such that $H^{Q_G}=a\cdot F^Q$. Since we always assume something about the roots of the resulting polynomial, i.e. about $F^Q$ and thus also about $H^{Q_G}$, both Theorem \ref{k=1} and Corollary \ref{maincoroll} hold because if $F^Q$ only contains roots in regular $G$-orbits so does $H^{Q_G}$. 
\end{remark}
We state an analogue of Theorem \ref{bij} for polynomials: \begin{corollary}\label{main2}
The map $\delta_{Q_G}:\mathcal{I}_K\to\mathcal{I}_K^G/G$ with $F\mapsto G\ast r$ such that $F^{Q_G}=\prod(G\ast r)^k$ is a bijection.  
\end{corollary}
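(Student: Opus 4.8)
The plan is to check the three standard properties of a bijection — well-definedness, injectivity, and surjectivity — building entirely on the Main Theorem (Theorem \ref{main1}) and the point-level bijection $\psi$ of Theorem \ref{bij}. The map $\delta_{Q_G}$ is the polynomial-level shadow of $\psi$, so I expect the real content to live in those two results and the remaining work to be formal bookkeeping.

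First I would settle well-definedness. Theorem \ref{main1} gives $F^{Q_G}=(\prod(G\ast r))^k$ with the property that \emph{every} irreducible factor of $F^{Q_G}$ lies in the single orbit $G\ast r$. Hence the orbit $G\ast r\in\mathcal{I}_K^G/G$ assigned to $F$ is independent of which irreducible factor $r$ one extracts, and $\delta_{Q_G}$ is a genuine map into $\mathcal{I}_K^G/G$.

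For injectivity, suppose $\delta_{Q_G}(F_1)=\delta_{Q_G}(F_2)=G\ast r$ and pick a root $v\in R_r$. Since $r$ divides both $F_1^{Q_G}$ and $F_2^{Q_G}$, the computation from the proof of Theorem \ref{main1}, namely $0=F_i^{Q_G}(v)=h(v)^{\deg(F_i)}F_i(Q_G(v))$ with $h(v)\neq 0$ because $r\in\NR_K^G$, forces $Q_G(v)$ to be a common root of $F_1$ and $F_2$. As both are monic and irreducible over $K$, they coincide. For surjectivity, given an orbit $G\ast r$ with $r\in\mathcal{I}_K^G$, I would choose $v\in R_r$ and set $\alpha:=Q_G(v)$. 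Because $r\in\NR_K^G$ we have $v\notin G\circ\infty$, and since Theorem \ref{bij} sends $G\circ\infty$ precisely to $\infty$, the value $\alpha$ lies in $\overline{K}$ rather than at $\infty$. Let $F\in\mathcal{I}_K$ be the minimal polynomial of $\alpha$ over $K$. Then $v$ is a root of $F^{Q_G}$, as $F^{Q_G}(v)=h(v)^{\deg(F)}F(\alpha)=0$, so $v$ is a root of one of the irreducible factors of $F^{Q_G}$; but that factor is the minimal polynomial of $v$ over $K$, which is $r$ itself. Therefore $r$ is an irreducible factor of $F^{Q_G}$ and $\delta_{Q_G}(F)=G\ast r$.

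Since all the genuine geometry is already encoded in Theorem \ref{main1} and Theorem \ref{bij}, I do not expect a deep obstacle here. The only point demanding care is in the surjectivity step: one must guarantee that $\alpha=Q_G(v)\neq\infty$ and that $h(v)\neq 0$ so that the minimal polynomial $F$ of $\alpha$ is a bona fide element of $\mathcal{I}_K$ and $F^{Q_G}$ is the relevant polynomial. This is exactly where the hypothesis $r\in\mathcal{I}_K^G$, equivalently $v\notin G\circ\infty$, combined with Theorem \ref{bij}, is essential.
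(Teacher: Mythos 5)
Your proof is correct. The well-definedness and surjectivity steps are essentially identical to the paper's: the paper also takes $v\in R_r$, sets $\alpha=Q_G(v)$, lets $F$ be the minimal polynomial of $\alpha$, and observes that $v$ is then a root of $F^{Q_G}$, so $r\mid F^{Q_G}$ and Theorem \ref{main1} finishes the job (your extra care that $\alpha\neq\infty$ and $h(v)\neq 0$ because $r\in\mathcal{I}_K^G$ is a point the paper leaves implicit). Where you genuinely diverge is injectivity. The paper argues at the level of root sets: from $F^{Q_G}=(\prod(G\ast r))^k$ and $H^{Q_G}=(\prod(G\ast r))^l$ it gets $R_{F^{Q_G}}=R_{H^{Q_G}}$, writes both as $\bigcup_{\alpha\in R_F}Q_G^{-1}(\alpha)=\bigcup_{\beta\in R_H}Q_G^{-1}(\beta)$, and then invokes the bijection $\psi$ of Theorem \ref{bij} to conclude $R_F=R_H$ and hence $F=H$. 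You instead note that $r$ divides both $F_1^{Q_G}$ and $F_2^{Q_G}$, so $Q_G(v)$ is a common root of $F_1$ and $F_2$, and two monic irreducible polynomials over $K$ sharing a root in $\overline{K}$ are both the minimal polynomial of that root. Your version is more elementary -- it does not need Theorem \ref{bij} at all for this step -- while the paper's version makes the structural picture (that $\delta_{Q_G}$ is the polynomial-level shadow of $\psi$) more visible. Both are complete.
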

\begin{proof}
By Theorem \ref{main1} $\delta_{Q_G}$ defines a mapping between $\mathcal{I}_K$ and $\mathcal{I}_K^G/G$. First we show that $\delta_{Q_G}$ is surjective. Let $r\in\mathcal{I}_K^G$ and $v\in R_r$ a root of $r$. Then $r$ is the minimal polynomial of $v$ over $K$. Moreover, let $\alpha\in \overline{K}$ be such that $Q_G(v)=\alpha$ and denote by $F\in \mathcal{I}_K$ the minimal polynomial of $\alpha$. Then $F^{Q_G}$ has $v$ as a root, thus $r|F^{Q_G}$ and $F^{Q_G}=(\prod(G\ast r))^k$ by Theorem \ref{main1}, so $\delta_{Q_G}(F)=G\ast r$. Now onto the injectivity: Let $F,H\in\mathcal{I}_K$ be such that $\delta_{Q_G}(F)=\delta_{Q_G}(H)=G\ast r$ for $r\in\mathcal{I}_K^G$, so $F^{Q_G}=(\prod(G\ast r))^k$ and $H^{Q_G}=(\prod(G\ast r))^l$ and both $F^{Q_G}$ and $H^{Q_G}$ have the same roots. With the help of what we observed in the proof of Theorem \ref{main1} we obtain: \begin{equation*}
    \bigcup\limits_{\alpha\in R_F}Q_G^{-1}(\alpha)=R_{F^{Q_G}}=R_{H^{Q_G}}=\bigcup\limits_{\beta\in R_H}Q_G^{-1}(\beta).
\end{equation*} Therefore $R_F=R_H$ since $Q_G$ induces a bijection between $\K$ and $(\K)/G$ by Theorem \ref{bij}. As $F$ and $H$ are irreducible, monic and share the same roots they have to be equal. 
\end{proof} As an immediate consequence of this corollary we obtain the main part of the general version of Theorem R: \begin{theorem}\label{genreis} If $f\in\mathcal{I}_K^G$ is a $G$-invariant monic irreducible polynomial with root $v\in R_f$ that is contained in a regular $G$-orbit, then there is $F\in \mathcal{I}_K$ such that $f=F^{Q_G}$.
\end{theorem}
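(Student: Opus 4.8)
The plan is to deduce the statement almost entirely from the bijection $\delta_{Q_G}$ of Corollary \ref{main2}, reserving the regularity hypothesis for the final step where it forces the exponent to be $1$. The one genuinely conceptual observation is that $G$-invariance of $f$ collapses its orbit: by definition $f\in\mathcal{I}_K^G$ being $G$-invariant means $[A]\ast f=f$ for every $[A]\in G$, so the orbit $G\ast f=\{[A]\ast f\mid [A]\in G\}$ equals the singleton $\{f\}$, and hence $\prod(G\ast f)=f$. In other words, $f$ is itself the orbit polynomial of its own one-element orbit $G\ast f\in\mathcal{I}_K^G/G$, rather than a proper product of distinct conjugates.

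Next I would invoke the surjectivity half of Corollary \ref{main2}. Since $\delta_{Q_G}\colon\mathcal{I}_K\to\mathcal{I}_K^G/G$ is a bijection, there exists $F\in\mathcal{I}_K$ with $\delta_{Q_G}(F)=G\ast f$. Unwinding the definition of $\delta_{Q_G}$, this says precisely that $F^{Q_G}=\left(\prod(G\ast f)\right)^k=f^{k}$ for some integer $k\ge 1$. (Equivalently, one can exhibit $F$ by hand, exactly as in the surjectivity argument of Corollary \ref{main2}: take $\alpha:=Q_G(v)$ and let $F\in\mathcal{I}_K$ be the minimal polynomial of $\alpha$ over $K$; then $v$ is a root of $F^{Q_G}$, so $f\mid F^{Q_G}$, and Theorem \ref{main1} forces $F^{Q_G}=f^{k}$.)

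Finally I would strip off the exponent using the regularity assumption. The distinguished root $v\in R_f$ is a root of $F^{Q_G}=f^{k}$, and by hypothesis $|G\circ v|=|G|$, so $v$ lies in a regular $G$-orbit. This is exactly the hypothesis of Theorem \ref{k=1} applied to $F^{Q_G}$, which yields $k=1$, i.e. $F^{Q_G}=f$, completing the proof.

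I do not anticipate a real obstacle, since all the substantive work has been front-loaded into Corollary \ref{main2} and Theorem \ref{k=1}. The only point requiring care is the bookkeeping remark in the first paragraph, namely that the $G$-invariance of the \emph{polynomial} $f$ makes its $G$-orbit trivial, so that the orbit polynomial produced by $\delta_{Q_G}^{-1}$ is $f$ itself; without this observation one might worry that surjectivity only returns some larger product of conjugates of $f$.
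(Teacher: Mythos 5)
Your proposal is correct and follows the paper's own proof essentially verbatim: the paper likewise notes that $G$-invariance gives $G\ast f=\{f\}$, applies the bijection $\delta_{Q_G}$ of Corollary \ref{main2} to obtain $F$ with $F^{Q_G}=f^k$, and then uses the regular-orbit hypothesis together with Theorem \ref{k=1} to force $k=1$. Your parenthetical explicit construction of $F$ as the minimal polynomial of $Q_G(v)$ is exactly the surjectivity argument already contained in Corollary \ref{main2}, so nothing is added or missing.
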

\begin{proof}
If $f\in\mathcal{I}_K^G$ is $G$-invariant, then $G\ast f=\{f\}$. By Corollary \ref{main2} we get that there is $F\in\mathcal{I}_K$ such that $\delta_{Q_G}(F)=\{f\}$, which translates to $F^{Q_G}=f^k$. Further $k=1$ because of Theorem \ref{k=1} and the assumption that $v\in R_f$ is contained in a regular $G$-orbit. 
\end{proof}
To complete the proofs of the Main Theorem and the general Theorem R we need to show that the number of irreducible polynomials for which $F^{Q_G}=(\prod G\ast r)^k$ with $k>1$ is finite. By Theorem \ref{k=1} this is equivalent to showing that the number of irreducible polynomials $F\in\mathcal{I}_K$ for which $F^{Q_G}$ has roots in non-regular $G$-orbits is finite. We give these polynomials the following name: \begin{definition}
    We call $F\in\mathcal{I}_K$ \textbf{$Q_G$-non-conformal} if $F^{Q_G}=(\prod(G\ast r))^k$ for $r\in\mathcal{I}_K^G$, $k\in\mathbb{N}$ and $k>1$. For the set of $Q_G$-non-conformal polynomials we write $\mathcal{NC}^{Q_G}$.
\end{definition} Further we set \begin{equation*}
    P_G:=\{u\in \overline{K}\cup\{\infty\}|~|G\circ u|<|G|\}
\end{equation*} as the set of elements in $\K$ contained in non-regular $G$-orbits. We have the following: \begin{lemma}[{\cite[Lemma 2.1]{bluher1}}]\label{deg2}
Let $G\le\PG(K)$ be finite and $v\in \overline{K}\cup\{\infty\}$. Then $G\circ v$ is non-regular if and only if there is $[A]\in G\setminus\{[I_2]\}$ such that $[A]\circ v=v$, thus \begin{equation*}
    P_G=\{u\in \overline{K}\cup\{\infty\}|~\exists [A]\in G\setminus\{[I_2]\}:~[A]\circ u=u\}
\end{equation*} and this set is finite; more precisely $|P_G|\le 2(|G|-1)$. Furthermore $[K(u):K]\le 2$ for all $u\in P_G\setminus\{\infty\}$. 
\end{lemma}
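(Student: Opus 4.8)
The plan is to deduce everything from the orbit--stabilizer theorem together with the classical fact that a non-identity M\"obius transformation has at most two fixed points on $\overline{K}\cup\{\infty\}$. For the first equivalence, I would note that $\circ$ is an action of the finite group $G$ on $\overline{K}\cup\{\infty\}$, so orbit--stabilizer gives $|G\circ v|=|G|/|\operatorname{Stab}_G(v)|$. Hence $G\circ v$ is regular exactly when $\operatorname{Stab}_G(v)=\{[I_2]\}$, and therefore $G\circ v$ is non-regular if and only if some $[A]\in G\setminus\{[I_2]\}$ satisfies $[A]\circ v=v$. The stated description of $P_G$ is then just the union of these stabilizing conditions over all $v$.

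Next I would bound the fixed points of a single $[A]\in G\setminus\{[I_2]\}$, with $A$ having entries $a,b,c,d$ as in the convention (\ref{Amat}). A finite point $v\in\overline{K}$ is fixed precisely when $\frac{av+b}{cv+d}=v$, that is, when $cv^2+(d-a)v-b=0$, while $\infty$ is fixed precisely when $c=0$. The key point is that the polynomial $cx^2+(d-a)x-b\in K[x]$ cannot be identically zero: $c=0$, $d=a$ and $b=0$ would force $A=aI_2$, hence $[A]=[I_2]$, against the hypothesis. Thus $[A]$ has at most two fixed points in $\overline{K}\cup\{\infty\}$, namely at most two roots of a genuine quadratic if $c\neq 0$, or at most one finite root together with $\infty$ if $c=0$. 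Writing $P_G=\bigcup_{[A]\in G\setminus\{[I_2]\}}\operatorname{Fix}([A])$ and using that $G$ has exactly $|G|-1$ non-identity elements yields both the finiteness of $P_G$ and the bound $|P_G|\le 2(|G|-1)$.

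Finally, for the degree statement, any $u\in P_G\setminus\{\infty\}$ is a finite fixed point of some $[A]\neq[I_2]$, hence a root of the nonzero polynomial $cx^2+(d-a)x-b$ whose coefficients lie in $K$; this immediately gives $[K(u):K]\le 2$. I do not anticipate a genuine obstacle here: the only places demanding care are the degenerate case $c=0$, where $\infty$ must be tracked separately as a possible fixed point, and the verification that the fixed-point quadratic is nonzero precisely because $[A]\neq[I_2]$.
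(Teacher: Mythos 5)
Your argument is correct and complete. Note, however, that the paper itself offers no proof of this lemma: it is quoted verbatim from Bluher's work (Lemma 2.1 of the cited reference), so there is no in-paper argument to compare against. What you have written is the standard self-contained derivation: orbit--stabilizer reduces non-regularity to the existence of a non-trivial stabilizing element, and the fixed-point set of a single non-identity class $[A]$ is controlled by the polynomial $cx^2+(d-a)x-b$, which is nonzero precisely because $c=d-a=b=0$ would force $A=aI_2$ and hence $[A]=[I_2]$. Two small points you handled implicitly but correctly: first, passing from $\frac{av+b}{cv+d}=v$ to the quadratic requires $cv+d\neq 0$, which is automatic at a finite fixed point since $cv+d=0$ would give $[A]\circ v=\infty\neq v$; second, in the degenerate case $c=0$, $d=a$, $b\neq 0$ the polynomial is a nonzero constant with no roots, so $\infty$ is the only fixed point and the count of at most two fixed points per non-identity element still holds. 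The union bound over the $|G|-1$ non-identity elements then gives $|P_G|\le 2(|G|-1)$, and the degree bound $[K(u):K]\le 2$ follows because $u$ is a root of a nonzero polynomial of degree at most $2$ with coefficients in $K$. This is exactly the level of argument the cited source uses, so your proof can stand in for the citation.
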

We denote by $\mathcal{P}_G$ the set of minimal polynomials of elements in $P_G$, that is, \begin{equation}\label{shoeshpg}
    \mathcal{P}_G:=\{r\in\mathcal{I}_K^G|\exists \alpha\in P_G:~ r(\alpha)=0\}.
\end{equation} Notice that $\mathcal{P}_G\subseteq \mathcal{I}_K^G$ by definition, thus $\mathcal{P}_G$ does not contain polynomials with roots in $(G\circ \infty)\setminus\{\infty\}$, even if this orbit is non-regular. We obtain
\begin{lemma}\label{nonconformlem}
We have\footnote{The set $\mathcal{NC}^{Q_G}$ can be empty. In that case we define the left side of the subset equation to be empty as well.} \begin{equation*}
    \bigcup\limits_{F\in\mathcal{NC}^{Q_G}}\delta_{Q_G}(F)\subseteq \mathcal{P}_G.
\end{equation*} 
 In particular \begin{equation*}
    |\mathcal{NC}^{Q_G}|\le |\mathcal{P}_G|\le |P_G|\le 2(|G|-1),
\end{equation*} so there are only finitely many $Q_G$-non-conformal polynomials.
\end{lemma}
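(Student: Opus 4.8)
The plan is to show that for every non-conformal $F$ the orbit $\delta_{Q_G}(F)$ is already contained in $\mathcal{P}_G$, and then to read off the three successive cardinality estimates. The two results I would lean on are Theorem \ref{k=1} together with its consequence Corollary \ref{size}: the former says that if every root of $F^{Q_G}$ sits in a regular orbit then $F^{Q_G}$ is an orbit polynomial, while the latter forces all $G$-orbits inside $R_{F^{Q_G}}$ to have the same size.

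First I would fix $F\in\mathcal{NC}^{Q_G}$, so that $F^{Q_G}=(\prod(G\ast r))^k$ with $k>1$ and $\delta_{Q_G}(F)=G\ast r$. By Corollary \ref{size} all $G$-orbits inside $R_{F^{Q_G}}$ have the same size; were this common size equal to $|G|$, Theorem \ref{k=1} would force $F^{Q_G}$ to be an orbit polynomial, contradicting $k>1$. Hence every root of $F^{Q_G}$ lies in a non-regular orbit, i.e. $R_{F^{Q_G}}\subseteq P_G$. In particular $R_r\subseteq R_{F^{Q_G}}\subseteq P_G$, and as $r$ is an irreducible factor of $F^{Q_G}\in\mathcal{NR}_K^G$ we have $r\in\mathcal{I}_K^G$; the two facts together put $r\in\mathcal{P}_G$ by the defining equation (\ref{shoeshpg}). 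Finally I would record that $P_G$ is a union of $G$-orbits in $\K$ and that the M\"obius action preserves orbit size, so $\mathcal{P}_G$ is $G$-stable: if $r\in\mathcal{P}_G$ has a root $\alpha\in P_G$, then by Lemma \ref{stich} each $[A]\ast r$ has the root $[A^{-1}]\circ\alpha\in P_G$. Thus the whole orbit $\delta_{Q_G}(F)=G\ast r$ lies in $\mathcal{P}_G$, which is precisely the asserted inclusion once we take the union over all $F\in\mathcal{NC}^{Q_G}$.

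For the cardinality chain I would argue as follows. The map $\delta_{Q_G}$ is injective by Corollary \ref{main2} and its values are pairwise disjoint $G$-orbits; since each such orbit lies in $\mathcal{P}_G$, the number of non-conformal polynomials is bounded by the number of $G$-orbits inside $\mathcal{P}_G$, whence $|\mathcal{NC}^{Q_G}|\le|\mathcal{P}_G|$. Next, choosing for each $r\in\mathcal{P}_G$ one of its roots in $P_G$ defines an injection $\mathcal{P}_G\to P_G$, since distinct monic irreducible polynomials have disjoint root sets; this gives $|\mathcal{P}_G|\le|P_G|$. The last estimate $|P_G|\le 2(|G|-1)$ is exactly Lemma \ref{deg2}, and concatenating the three inequalities proves the bound.

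The counting is routine; the step carrying the real content is the inclusion $R_{F^{Q_G}}\subseteq P_G$, namely realizing that non-conformality of $F$ does not just say that \emph{some} root lands in a non-regular orbit but, through Corollary \ref{size}, that \emph{all} of them do. The only other point needing care is the $G$-stability of $\mathcal{P}_G$, which is what upgrades the membership $r\in\mathcal{P}_G$ to the full orbit inclusion $G\ast r\subseteq\mathcal{P}_G$ demanded by the union on the left-hand side.
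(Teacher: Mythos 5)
Your proof is correct and follows essentially the same route as the paper's: the inclusion comes from the contrapositive of Theorem \ref{k=1} (combined with Corollary \ref{size} to see that \emph{all} roots of $F^{Q_G}$ land in non-regular orbits), and the cardinality chain comes from the injectivity of $\delta_{Q_G}$ (Corollary \ref{main2}) together with Lemma \ref{deg2}. You merely spell out the well-definedness of $F\mapsto\delta_{Q_G}(F)$ as a map into the $G$-orbits of $\mathcal{P}_G$ in more detail than the paper does.
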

\begin{proof}
The map $\mathcal{NC}^{Q_G}\to\mathcal{P}_G/G$ with $F\mapsto \delta_{Q_G}(F)$ defines an injective mapping by Theorem \ref{main2} and Theorem \ref{k=1}, thus the subset equation \begin{equation*}
    \bigcup\limits_{F\in\mathcal{NC}^{Q_G}}\delta_{Q_G}(F)\subseteq \mathcal{P}_G
\end{equation*} follows. Since $\mathcal{P}_G$ only contains minimal-polynomials of elements in $P_G$ and the degree of $r\in\mathcal{P}_G$ is either 1 or 2 by Lemma \ref{deg2} we get that $|\mathcal{P}_G|\le |P_G|$, so $\mathcal{P}_G$ is finite because $|P_G|\le 2(|G|-1)$ by Lemma \ref{deg2}. Hence $\mathcal{NC}^{Q_G}$ is finite as well and \begin{equation*}
    |\mathcal{P}_G|\ge \sum\limits_{F\in\mathcal{NC}^{Q_G}}|\delta_{Q_G}(F)|\ge \sum\limits_{F\in\mathcal{NC}^{Q_G}}1=|\mathcal{NC}^{Q_G}|.
\end{equation*}
\end{proof} We close this section by proving Theorem \ref{arbginv}: \begin{theorem}\label{fac2}
Let $F_1,\ldots F_l\in\mathcal{NC}^{Q_G}$ be all $Q_G$-non-conformal polynomials. Further let $r_1,\ldots,r_l\in\mathcal{I}_K^G$ be such that $\delta_{Q_G}(F_i)=G\ast r_i$ and $n_i\in\mathbb{N}$ such that $F_i^{Q_G}=(\prod G\ast r_i)^{n_i}$. Then for every $G$-invariant polynomial $f\in\mathcal{NR}_K^G$ there is a unique monic polynomial $F\in K[x]$ and unique natural numbers $0\le k_i<n_i$ such that \begin{equation*}
    f=\left(\prod\limits_{i=1}^l(\prod\limits G\ast r_i)^{k_i}\right)\cdot F^{Q_G}.
\end{equation*}
\end{theorem}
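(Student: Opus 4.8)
The plan is to reinterpret the claim as coordinatewise Euclidean division in the free commutative monoid of $G$-invariant polynomials in $\NR_K^G$, whose atoms are the orbit polynomials. First I would invoke Corollary \ref{fac1} to write the given $G$-invariant $f\in\NR_K^G$ uniquely as $f=\prod_{j=1}^m(\prod G\ast s_j)^{e_j}$ over pairwise distinct orbits $G\ast s_j$ with $s_j\in\mathcal{I}_K^G$ and $e_j\ge 1$. The bijection $\delta_{Q_G}$ from Corollary \ref{main2} then assigns to each orbit $G\ast s_j$ a unique irreducible $H_j\in\mathcal{I}_K$ with $H_j^{Q_G}=(\prod G\ast s_j)^{\kappa_j}$, and by the definition of $\mathcal{NC}^{Q_G}$ we have $\kappa_j>1$ precisely when $H_j$ is non-conformal, i.e. when $G\ast s_j=G\ast r_i$ for some $i$, in which case $\kappa_j=n_i$.

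For existence I would divide each exponent as $e_j=c_j\kappa_j+d_j$ with $0\le d_j<\kappa_j$ and set $F:=\prod_{j=1}^m H_j^{c_j}$, a monic polynomial. Using the multiplicativity of the $Q_G$-transform (Lemma \ref{basicqg}, noting $Q_G(\infty)=\infty$ so every positive-degree factor is admissible) gives $F^{Q_G}=\prod_{j=1}^m(\prod G\ast s_j)^{\kappa_j c_j}$, whence $f=F^{Q_G}\cdot\prod_{j=1}^m(\prod G\ast s_j)^{d_j}$. The point is that $d_j=0$ whenever $\kappa_j=1$, so only the non-conformal orbits survive in the correction product; setting $k_i:=d_j$ for the index $j$ with $G\ast s_j=G\ast r_i$, and $k_i:=0$ for those non-conformal orbits not occurring in $f$, produces exactly the asserted factorization with $0\le k_i<n_i$.

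For uniqueness I would compare any two such expressions through the unique factorization of Corollary \ref{fac1}. Since $\delta_{Q_G}$ is a bijection, every monic irreducible factor of $F$ is the $\delta_{Q_G}$-preimage of exactly one orbit, so by multiplicativity the exponent of a fixed orbit polynomial $\prod(G\ast s)$ in $F^{Q_G}$ equals $\kappa$ times the exponent of that single preimage in $F$. Matching the total multiplicity of each orbit in $f$ then forces, at a non-conformal orbit $G\ast r_i$, an equation $m_i=n_i\,e_i+k_i$ with $0\le k_i<n_i$, where $m_i$ is the multiplicity of $\prod(G\ast r_i)$ in $f$ and $e_i$ the multiplicity of $F_i$ in $F$; the quotient $e_i$ and remainder $k_i$ are unique, while at a conformal orbit the multiplicity of its preimage in $F$ is pinned down directly. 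This determines all multiplicities of $F$ and all $k_i$. I expect the only genuine subtlety to be the bookkeeping that the transform affects each orbit's multiplicity only through its single $\delta_{Q_G}$-preimage; this is exactly where the bijectivity of $\delta_{Q_G}$ and the multiplicativity of the transform have to be combined, after which the whole statement reduces to division with remainder performed one orbit at a time.
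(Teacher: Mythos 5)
Your proposal is correct and follows essentially the same route as the paper: decompose $f$ into orbit-polynomial powers via Corollary \ref{fac1}, pull each orbit back through the bijection $\delta_{Q_G}$, and perform division with remainder on the exponents, using the multiplicativity of the $Q_G$-transform (valid since $Q_G(\infty)=\infty$) for both existence and uniqueness. Your grouping by whether $\kappa_j>1$ rather than by membership in $\mathcal{P}_G$ is, if anything, a slightly cleaner bookkeeping of the same argument.
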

\begin{proof}
By Corollary \ref{fac1} we have \begin{equation*}
    f=\prod\limits_{i=1}^e(\prod(G\ast g_i))^{m_i}
\end{equation*} with $g_i\in\mathcal{I}_K^G$ and $m_1,\ldots,m_e\in\mathbb{N}\setminus\{0\}$. We refine this factorization by grouping the orbit polynomials into either belonging to $\mathcal{P}_G$ or not, which gives \begin{equation*}
    f=\prod\limits_{i=1}^l(\prod(G\ast r_i))^{l_i}\cdot \prod\limits_{j=1}^c(\prod(G\ast h_j))^{d_j};
\end{equation*} here we allow $l_i=0$. Since $h_j\in\mathcal{I}_K^G\setminus\mathcal{P}_G$ for all $j\in [c]$ there exists a unique monic polynomial $H_1\in K[x]$ such that
\begin{equation*}
    H_1^{Q_G}=\prod\limits_{j=1}^c(\prod(G\ast h_j))^{d_j}
\end{equation*} by Theorem \ref{k=1} together with Lemma \ref{basicqg} item 2. For the remaining factor we divide $l_i$ by $n_i$ and write $k_i$ for the remainder, so $l_i=a_i\cdot n_i+k_i$ and $0\le k_i<n_i$. We have
\begin{equation*}
    (F_i^{a_i})^{Q_G}=(\prod(G\ast r_i))^{a_i\cdot n_i}.
\end{equation*} Hence we define \begin{equation*}
    H_2:=\prod_{i=1}^l F_i^{a_i}
\end{equation*} and get \begin{equation*}
    f=\left(\prod\limits_{i=1}^l(\prod\limits G\ast r_i)^{k_i}\right)\cdot(H_2\cdot H_1)^{Q_G}.
\end{equation*} Set $F=H_2\cdot H_1$, which is unique since both $H_1$ and $H_2$ are unique.
\end{proof}
\section{Some Notes on the Galois Theory of Invariant Polynomials}
In this section we want to explain some statements about the Galois theory of $G$-invariant polynomials and their implications for finite fields. In particular, we give an alternative proof of the fact that all irreducible monic polynomials $f\in\F_q[x]$ of degree $\deg(f)\ge 3$ have cyclic stabilizers in $\PG(\F_q)$ (\cite[Theorem 1.3]{reisEx}). This means that if $f\in\mathcal{I}_{\F_q}=:\mathcal{I}_q$ is of degree $\deg(f)\ge 3$ and $G\le \PG(\F_q)$ is non-cyclic, then $|G\ast f|>1$. We will exploit this in the proof of Theorem \ref{facfin}.

 Consider the $G$-invariant separable and monic polynomial $f\in\mathcal{I}_K^G$ with roots belonging to regular $G$-orbits and its splitting field $L_f$ in a fixed algebraic closure of $K$. As seen before we can partition the set of roots of $f$ into $G$-orbits \begin{equation*}
    R_f=\bigcup_{i=1}^k(G\circ v_i)
\end{equation*} where $v_1,\ldots v_k\in R_f$ is a set of representatives of $R_f/G$. Thus $\deg(f)=|G|\cdot k$, since all $G$-orbits in $R_f$ are regular. Let $\sigma\in\operatorname{Gal}(f):=\operatorname{Gal}(L_f/K)$ and \begin{equation*}
    A=\begin{pmatrix}a&b\\c&d\end{pmatrix}
\end{equation*} such that $[A]\in G$, then \begin{equation*}
    \sigma([A]\circ v)=\sigma(\frac{av+b}{cv+d})=\frac{a\sigma(v)+b}{c\sigma(v)+d}=[A]\circ \sigma(v)
\end{equation*} for all $v\in R_f$. This shows that the actions of $G$ and $\operatorname{Gal}(f)$ on $R_f$ commute and that $G\circ v_1,\ldots,G\circ v_k$ is a non-trivial block system for $\operatorname{Gal}(f)$: \begin{definition}[See \cite{permutg}]
    Let $G$ be a finite group acting transitively on a non-empty finite set $X$. We say that a subset $Y\subseteq X$ is a block for $G$ if $g\cdot Y=Y$ or $g\cdot Y\cap Y=\varnothing$. Moreover, $Y$ is a non-trivial block if $1<|Y|<|X|$. If $Y$ is a block then $\{g\cdot Y|g\in G\}$ is a partition of $X$ and is called a block system of $X$ for $G$. 
\end{definition}
We define the point-wise stabilizer subgroup of a block $G\circ v$ as\begin{equation*}
    \operatorname{p-Stab}_{\operatorname{Gal}(f)}(G\circ v):=\{\sigma\in \operatorname{Gal}(f)|\sigma(w)=w \text{ for all }w\in G\circ v\}.
\end{equation*} Similarly, the set-wise stabilizer is  \begin{equation*}
    \operatorname{s-Stab}_{\operatorname{Gal}(f)}(G\circ v):=\{\sigma\in \operatorname{Gal}(f)|\sigma(w)\in G\circ v \text{ for all }w\in G\circ v\}.
\end{equation*} Notice that $ \operatorname{p-Stab}_{\operatorname{Gal}(f)}(G\circ v)\trianglelefteq \operatorname{s-Stab}_{\operatorname{Gal}(f)}(G\circ v)$. Our first goal is to show that $G$ is isomorphic to the quotient of these stabilizers. For that, we need a nice lemma about commuting group actions stated in \cite{guire} and \cite{Gow}: \begin{lemma}[\cite{guire} \& \cite{Gow}] \label{commuting}
Let $X$ be a finite non-empty set and $G,H$ groups acting transitively and faithful\footnote{$G$ acts faithful on $X$ if $g\cdot x=x$ for all $x\in X$ implies $g=1$} on $X$. Moreover, $G$ acts regularly on $X$, that is, $\operatorname{Stab}_G(x)=\{1\}$ for all $x\in X$ and the actions of $G$ and $H$ commute, i.e. \begin{equation*}
    g(h(x))=h(g(x))
\end{equation*} for all $h\in H$, $g\in G$ and $x\in X$. Then $H$ acts regularly on $X$ and is isomorphic to $G$.
\end{lemma}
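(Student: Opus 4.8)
The plan is to exploit the regularity of $G$ to identify $X$ with $G$ via a chosen base point, and then to show that the \emph{commuting} action of $H$ forces $H$ to be (anti-)isomorphic to $G$. Fix once and for all a point $x_0\in X$. Since $G$ acts regularly, the evaluation map $\phi\colon G\to X$, $g\mapsto g\cdot x_0$, is a bijection: injectivity is freeness and surjectivity is transitivity. The argument then splits into two parts, first showing that $H$ also acts regularly, and then constructing an explicit isomorphism $H\to G$.

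For the regularity of $H$, since $H$ already acts transitively it suffices to prove that its action is free, and as the stabilizers of points in a transitive action are conjugate, it is enough to check that $\operatorname{Stab}_H(x_0)=\{1\}$. So suppose $h\in H$ fixes $x_0$. For an arbitrary $x\in X$ write $x=g\cdot x_0$ with $g\in G$, which is possible by transitivity of $G$; then the commuting hypothesis gives
\begin{equation*}
    h\cdot x = h\cdot(g\cdot x_0)=g\cdot(h\cdot x_0)=g\cdot x_0=x,
\end{equation*}
so $h$ fixes every point of $X$. Faithfulness of $H$ now yields $h=1$. Hence $\operatorname{Stab}_H(x_0)=\{1\}$, the action of $H$ is free and therefore regular, and in particular $h\mapsto h\cdot x_0$ is also a bijection $H\to X$. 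I expect this to be the heart of the proof, since it is the only place where all three hypotheses, namely commutativity of the two actions, regularity of $G$, and faithfulness of $H$, are used together.

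It remains to produce the isomorphism. For each $h\in H$, regularity of $G$ guarantees a unique element $\theta(h)\in G$ with $\theta(h)\cdot x_0=h\cdot x_0$, and this defines a map $\theta\colon H\to G$, which is bijective because it equals $\phi^{-1}$ composed with the bijection $h\mapsto h\cdot x_0$. Using that the two actions commute, one computes
\begin{equation*}
    \theta(h_1h_2)\cdot x_0=(h_1h_2)\cdot x_0=h_1\cdot(\theta(h_2)\cdot x_0)=\theta(h_2)\cdot(\theta(h_1)\cdot x_0)=(\theta(h_2)\theta(h_1))\cdot x_0,
\end{equation*}
and freeness of $G$ forces $\theta(h_1h_2)=\theta(h_2)\theta(h_1)$. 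Thus $\theta$ is a bijective \emph{anti}-homomorphism; the only mild subtlety, arising from both actions being on the left, is that it reverses the order of multiplication rather than preserving it. This is repaired by composing with inversion: the map $\psi\colon H\to G$, $\psi(h):=\theta(h)^{-1}$, satisfies $\psi(h_1h_2)=\theta(h_1)^{-1}\theta(h_2)^{-1}=\psi(h_1)\psi(h_2)$ and remains bijective, so $\psi$ is the desired isomorphism $H\cong G$.
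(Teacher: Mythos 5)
Your proof is correct and complete: establishing freeness of the $H$-action from commutativity plus faithfulness, and then reading off the (anti-)isomorphism via the base-point bijections, is exactly the standard argument, and you handle the one genuine subtlety (the order reversal for two left actions, fixed by composing with inversion) properly. Note that the paper itself gives no proof of this lemma --- it is imported verbatim from the cited references --- so there is no in-paper argument to compare against; your write-up would serve as a self-contained substitute for the citation.
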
 Additionally we prove the following \begin{lemma}
Let $f\in\mathcal{I}_K^G$ be $G$-invariant, separable and $R_f$ only contains regular $G$-orbits. Moreover let $v \in R_f$ be a root of $f$. Then we have: \begin{enumerate}
    \item $G$ acts transitively and regularly on $G\circ v$
    \item $U:=\operatorname{s-Stab}_{\operatorname{Gal}(f)}(G\circ v)/\operatorname{p-Stab}_{\operatorname{Gal}(f)}(G\circ v)$ acts faithful and transitively on $G\circ v$
    \item $\operatorname{p-Stab}_{\operatorname{Gal}(f)}(G\circ v)=\operatorname{Stab}_{\operatorname{Gal}(f)}(v)$, thus $U$ acts regularly on $G\circ v$
\end{enumerate}
\end{lemma}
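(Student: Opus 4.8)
The plan is to handle the three items in order, using throughout the commutation relation $\sigma([A]\circ w)=[A]\circ\sigma(w)$ established just above for $\sigma\in\operatorname{Gal}(f)$, $[A]\in G$ and $w\in R_f$, together with the fact that $f\in\mathcal{I}_K^G$ is irreducible, so that $\operatorname{Gal}(f)$ acts transitively on $R_f$. For the first item, transitivity of $G$ on $G\circ v$ is immediate since $G\circ v$ is by definition the $G$-orbit of $v$, and regularity is precisely the standing hypothesis that this orbit is regular: by the orbit--stabilizer theorem $|\operatorname{Stab}_G(v)|=|G|/|G\circ v|=1$, so $\operatorname{Stab}_G(v)=\{[I_2]\}$, and as all points of a single orbit have stabilizers of equal order, every point of $G\circ v$ has trivial stabilizer.

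For the second item, write $S:=\operatorname{s-Stab}_{\operatorname{Gal}(f)}(G\circ v)$ and $P:=\operatorname{p-Stab}_{\operatorname{Gal}(f)}(G\circ v)$. By definition $S$ acts on the finite set $G\circ v$ and $P$ is exactly the kernel of this action, so $U=S/P$ acts faithfully on $G\circ v$ by the standard quotient-by-kernel argument. The transitivity is the substantive point. Given $w_1,w_2\in G\circ v$, irreducibility of $f$ furnishes $\tau\in\operatorname{Gal}(f)$ with $\tau(w_1)=w_2$; what I then need is that $\tau$ actually lies in $S$. This follows from the commutation relation: since $w_1,w_2\in G\circ v$ we have $G\circ w_1=G\circ w_2=G\circ v$, whence $\tau(G\circ v)=\tau(G\circ w_1)=G\circ\tau(w_1)=G\circ w_2=G\circ v$, so $\tau$ stabilizes the block set-wise. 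Thus $S$, and therefore $U$, acts transitively on $G\circ v$.

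For the third item, the inclusion $P\subseteq\operatorname{Stab}_{\operatorname{Gal}(f)}(v)$ is trivial, since fixing every point of $G\circ v$ in particular fixes $v$. For the reverse, if $\sigma(v)=v$ then for any $w=[A]\circ v\in G\circ v$ the commutation relation gives $\sigma(w)=[A]\circ\sigma(v)=[A]\circ v=w$, so $\sigma\in P$; hence $\operatorname{Stab}_{\operatorname{Gal}(f)}(v)\subseteq P$ and the two groups coincide. Regularity of $U$ then drops out: the $U$-stabilizer of $v$ consists of the cosets $\sigma P$ with $\sigma\in S$ and $\sigma(v)=v$, i.e. with $\sigma\in\operatorname{Stab}_{\operatorname{Gal}(f)}(v)=P$, which is the trivial coset, and combining this with the transitivity of item 2 makes the $U$-action regular. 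Alternatively, regularity (and the isomorphism $U\cong G$) follows at once from Lemma \ref{commuting}, applied to the commuting regular $G$-action and faithful transitive $U$-action on $G\circ v$.

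The main obstacle is the transitivity claim in the second item: one must resist asserting transitivity of $S$ outright and instead use the commutation relation to upgrade an arbitrary Galois element relating two points of the block into an element that fixes the block set-wise. Everything else is either definitional, as with the faithfulness, or a one-line application of the commutation relation, as with the stabilizer identity and the resulting regularity.
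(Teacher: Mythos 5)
Your proposal is correct and follows essentially the same route as the paper: item 1 via orbit--stabilizer and the regularity hypothesis, item 3 via the commutation relation $\sigma([A]\circ v)=[A]\circ\sigma(v)$, and regularity of $U$ from the coincidence of the point-wise stabilizer with $\operatorname{Stab}_{\operatorname{Gal}(f)}(v)$. The only difference is that you spell out the ``standard facts about group actions'' the paper invokes for item 2 (faithfulness as quotient by the kernel, and transitivity of the set-wise stabilizer on the block via the commutation relation), which is a welcome but not substantively different elaboration.
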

\begin{proof}
For the first item note that the action of $G$ restricted to any of its orbits in $R_f$ is always transitive. Additionally, $G$ acts regularly on $G\circ v$ since all orbits are regular. That the induced action of $U$ on $G\circ v$ is faithful and transitive follows from standard facts about group actions, so onto the last item: The inclusion $\operatorname{p-Stab}_{\operatorname{Gal}(f)}(G\circ v)\subseteq \operatorname{Stab}_{\operatorname{Gal}(f)}(v)$ is obvious. For the other let $\sigma\in \operatorname{Stab}_{\operatorname{Gal}(f)}(v)$, so $\sigma(v)=v$. Moreover, by the first item, $G$ acts transitively on $G\circ v$, so for all $w\in G\circ v$ there is $[A]\in G$ such that $[A]\circ v=w$. As a consequence \begin{equation*}
    \sigma(w)=\sigma([A]\circ v)=[A]\circ \sigma(v)=[A]\circ v=w,
\end{equation*} so both sets are equal. Moreover, all stabilizers of elements in $G\circ v$ are equal to $\operatorname{p-Stab}_{\operatorname{Gal}(f)}(G\circ v)$. So for all  $w\in G\circ v$ we get \begin{equation*}
    U=\operatorname{s-Stab}_{\operatorname{Gal}(f)}(G\circ v)/\operatorname{Stab}_{\operatorname{Gal}(f)}(w),
\end{equation*} thus $U$ acts regularly on $G\circ v$.
\end{proof}
We apply Lemma \ref{commuting} to our setup. We set $G$ as $G$ in Lemma \ref{commuting} and $H=U$ as in the previous lemma, then \begin{equation}\label{firstiso}
    G\cong U=\operatorname{s-Stab}_{\operatorname{Gal}(f)}(G\circ v)/\operatorname{Stab}_{\operatorname{Gal}(f)}(v).
\end{equation} With that we can obtain \begin{corollary}\label{galeasy}
Let $f\in\mathcal{I}_K^G$ be $G$-invariant, separable and all $G$-orbits in $R_f$ are regular. Moreover let $v \in R_f$ be a root of $f$. \begin{enumerate}
    \item If $\operatorname{Gal}(f)$ is abelian, then $G\cong U\le \operatorname{Gal}(f)$
    \item If $\deg(f)=|G|$, then $G\cong \operatorname{Gal}(f)$
\end{enumerate}
\end{corollary}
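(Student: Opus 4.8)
The plan is to read off both statements from the isomorphism (\ref{firstiso}), namely $G\cong U=\operatorname{s-Stab}_{\operatorname{Gal}(f)}(G\circ v)/\operatorname{Stab}_{\operatorname{Gal}(f)}(v)$, by pinning down the two stabilizers occurring in it. In both items the whole problem reduces to controlling the point-wise stabilizer $P:=\operatorname{Stab}_{\operatorname{Gal}(f)}(v)$ (which equals $\operatorname{p-Stab}_{\operatorname{Gal}(f)}(G\circ v)$ by the previous lemma) and the set-wise stabilizer $S:=\operatorname{s-Stab}_{\operatorname{Gal}(f)}(G\circ v)$: if I can show $P=\{1\}$, then $U\cong S$ is literally a subgroup of $\operatorname{Gal}(f)$, and if in addition $S=\operatorname{Gal}(f)$, then $U=\operatorname{Gal}(f)$.

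For the first item I would argue that $\operatorname{Gal}(f)$ acts regularly on $R_f$. Since $f$ is irreducible, $\operatorname{Gal}(f)$ acts transitively on $R_f$, and this action is faithful because $L_f=K(R_f)$. A transitive, faithful action of an abelian group is automatically regular: the point stabilizers are all conjugate, hence equal by commutativity, and their common value fixes every root, so it is trivial by faithfulness. In particular $P=\operatorname{Stab}_{\operatorname{Gal}(f)}(v)=\{1\}$, whence $U=S/P=S$ is a subgroup of $\operatorname{Gal}(f)$ and $G\cong U\le\operatorname{Gal}(f)$ by (\ref{firstiso}).

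For the second item I would first note that, since every $G$-orbit in $R_f$ is regular and $\deg(f)=|R_f|=k\cdot|G|$ with $k$ the number of orbits, the hypothesis $\deg(f)=|G|$ forces $k=1$, that is $R_f=G\circ v$. The key observation is then that each root $[A]\circ v=\frac{av+b}{cv+d}$ is a M\"obius transform of $v$ with coefficients in $K$ and therefore lies in $K(v)$; hence $L_f=K(R_f)=K(v)$. Consequently $P=\operatorname{Stab}_{\operatorname{Gal}(f)}(v)=\operatorname{Gal}(L_f/K(v))=\{1\}$, and since $R_f=G\circ v$ every element of $\operatorname{Gal}(f)$ permutes the roots and thus stabilizes $G\circ v$ set-wise, so $S=\operatorname{Gal}(f)$. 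Plugging both into (\ref{firstiso}) gives $G\cong U=S/P=\operatorname{Gal}(f)$. Alternatively, once $L_f=K(v)$ is established, $|\operatorname{Gal}(f)|=[K(v):K]=\deg(f)=|G|$, and the isomorphism $G\cong U$ then forces equality directly.

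I do not expect a serious obstacle here, as the statement is essentially a bookkeeping exercise built on (\ref{firstiso}) and Lemma \ref{commuting}. The only genuinely new input is the remark that the roots $[A]\circ v$ all live in $K(v)$, which collapses the splitting field to $K(v)$ in the second item; the rest is the standard fact that a transitive, faithful abelian action is regular. Care is needed only to keep track of which hypotheses are used where, namely the regularity of all $G$-orbits (to deduce $k=1$) versus the irreducibility of $f$ (for transitivity of $\operatorname{Gal}(f)$ on $R_f$).
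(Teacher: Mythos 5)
Your proof is correct and follows essentially the same route as the paper: both items are read off from (\ref{firstiso}) by showing the point stabilizer of $v$ is trivial --- via the standard fact that a faithful transitive abelian action is regular for item 1, and via $R_f=G\circ v$ forcing the set-wise stabilizer to be all of $\operatorname{Gal}(f)$ for item 2. The only cosmetic difference is that in item 2 you derive triviality of $\operatorname{Stab}_{\operatorname{Gal}(f)}(v)$ from the observation $L_f=K(v)$, whereas the paper invokes faithfulness of $\operatorname{Gal}(f)$ on $R_f=G\circ v$ directly; these amount to the same fact.
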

\begin{proof}
We want to show that $\operatorname{Gal}(f)$ is abelian implies $\operatorname{Stab}_{\operatorname{Gal}(f)}(v)=\{\operatorname{id}\}$. To see this let $\sigma\in \operatorname{Stab}_{\operatorname{Gal}(f)}(v)$ and $w\in R_f$. Additionally set $\tau\in\operatorname{Gal}(f)$ such that $\tau(v)=w$ (exists since $f$ is irreducible and thus $\operatorname{Gal}(f)$ acts transitively on $R_f$). Then we obtain\begin{equation*}
    \sigma(w)=\sigma(\tau(v))=\tau(\sigma(v))=\tau(v)=w,
\end{equation*} so $\sigma(w)=w$ for all $w\in R_f$ and thus $\sigma=\operatorname{id}$ because $\operatorname{Gal}(f)$ acts faithful on $R_f$. Consequentially $G\cong \operatorname{s-Stab}_{\operatorname{Gal}(f)}(G\circ v)\le \operatorname{Gal}(f)$, which finishes the first part.

Now let $f$ be of degree $|G|$, so also $|R_f|=|G|$ and $R_f=G\circ v$ for all $v\in R_f$. Therefore $\operatorname{Gal}(f)=\operatorname{s-Stab}_{\operatorname{Gal}(f)}(G\circ v)$ by definition and $\operatorname{p-Stab}_{\operatorname{Gal}(f)}(G\circ v)=\{\operatorname{id}\}$ because $\operatorname{Gal}(f)$ acts faithful on $R_f$. This shows \begin{equation*}
    \operatorname{Gal}(f)=\operatorname{s-Stab}_{\operatorname{Gal}(f)}(G\circ v)\cong G.
\end{equation*} 
\end{proof}
Further, we get:
\begin{corollary}\label{ffgal}
Let $f\in\mathcal{I}_q$. If $f$ is $G$-invariant for a subgroup $G\le \PG(\F_q)$ and $R_f$ only contains regular $G$-orbits then $G$ has to be cyclic and $|G|\mid \deg(f)$.
\end{corollary}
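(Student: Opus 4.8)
The plan is to exploit the fact that the Galois group of any irreducible polynomial over a finite field is cyclic, and to transport this cyclicity to $G$ through the isomorphism (\ref{firstiso}) established above. Everything structural has already been done; the only genuinely new ingredient is a single finite-field input.

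First I would observe that $\F_q$ is perfect, so every $f\in\mathcal{I}_q$ is separable. Hence, once we assume that $R_f$ consists only of regular $G$-orbits, all hypotheses of the preceding lemma and of equation (\ref{firstiso}) are satisfied, and we are entitled to fix a root $v\in R_f$ and form the set-wise and point-wise stabilizers of the block $G\circ v$ inside $\operatorname{Gal}(f):=\operatorname{Gal}(L_f/\F_q)$.

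Next, I would invoke the key finite-field fact: the splitting field $L_f$ is a finite extension of $\F_q$, so $\operatorname{Gal}(f)$ is cyclic, generated by the Frobenius automorphism $x\mapsto x^q$. By (\ref{firstiso}) we have
\begin{equation*}
    G\cong U=\operatorname{s-Stab}_{\operatorname{Gal}(f)}(G\circ v)/\operatorname{Stab}_{\operatorname{Gal}(f)}(v),
\end{equation*}
and since every subgroup and every quotient of a cyclic group is again cyclic, $U$ is cyclic; therefore $G$ is cyclic. For the divisibility I would use the orbit decomposition $R_f=\bigcup_{i=1}^k(G\circ v_i)$ into regular $G$-orbits: each orbit has size $|G|$ by regularity, so $\deg(f)=|R_f|=k\cdot|G|$, whence $|G|\mid\deg(f)$.

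The main point — which is more an observation than an obstacle, given that the hard work is encapsulated in (\ref{firstiso}) and Lemma \ref{commuting} — is precisely the cyclicity of $\operatorname{Gal}(f)$ over $\F_q$. Once this is in hand, the cyclicity of $G$ follows immediately from its description as a quotient of a subgroup of $\operatorname{Gal}(f)$, and the degree divisibility is a direct count of the regular orbits.
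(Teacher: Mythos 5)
Your proof is correct and follows essentially the same route as the paper: both arguments rest on the isomorphism $G\cong U=\operatorname{s-Stab}_{\operatorname{Gal}(f)}(G\circ v)/\operatorname{Stab}_{\operatorname{Gal}(f)}(v)$ from (\ref{firstiso}) combined with the cyclicity of $\operatorname{Gal}(L_f/\F_q)$. The only (harmless) differences are that the paper routes through Corollary \ref{galeasy}(1) --- using abelianness of $\operatorname{Gal}(f)$ to show the point stabilizer is trivial, so that $G$ embeds as an actual subgroup of $C_{\deg(f)}$ and Lagrange gives $|G|\mid\deg(f)$ --- whereas you obtain cyclicity directly from the fact that a quotient of a subgroup of a cyclic group is cyclic, and the divisibility by counting the $k$ regular orbits in $R_f$ (valid since $f$ is separable over the perfect field $\F_q$); both variants are sound.
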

\begin{proof}
It is well-known that $\operatorname{Gal}(f)\cong C_{\deg(f)}$, where $C_n$ is the cyclic group of order $n$, so $\operatorname{Gal}(f)$ is also abelian. If $f$ is $G$-invariant for $G\le \PG(\F_q)$, then $G$ has to be isomorphic to a subgroup of $C_{\deg(f)}$ by Corollary \ref{galeasy} (1) and therefore has to be cyclic as well. By Lagrange's theorem $|G|$ divides $|C_{\deg(f)}|=\deg(f)$. 
\end{proof}
\begin{remark}
   This result does not hold for quadratic irreducible polynomials over finite fields. Define $\mathcal{I}_q^n$ as the set of monic irreducible polynomials over $\F_q$ of degree $n$. It can be shown that for $g\in\mathcal{I}_q^2$ we have $$G\ast g=\mathcal{I}_q^2$$ and thus $$|\operatorname{Stab}_{\PG(\F_q)}(g)|=\frac{|\PG(\F_q)|}{|\mathcal{I}_q^2|}=2(q+1).$$ Since the biggest order of an element in $\PG(\F_q)$ is $q+1$ the stabilizer can not be cyclic. In fact, it is dihedral. The reason why Corollary \ref{ffgal} fails is that the set of roots of a quadratic irreducible polynomial $g\in\mathcal{I}_q^2$ does not contain regular $\operatorname{Stab}_{\PG(\F_q)}(g)$-orbits.
\end{remark} 
We have enough to prove Theorem \ref{facfin}
\begin{proof}[Proof. (Theorem \ref{facfin})] Since $F^{Q_G}$ is separable it has degree many roots and together with Corollary \ref{size} we have that all $G$-orbits in $R_{F^{Q_G}}$ are regular. Note that $K=\F_ q$ is perfect so $|R_r|=\deg(r)$ for all irreducible polynomials in $\F_q[x]$.
    With Theorem \ref{main1} and \ref{k=1} we know that there exists an irreducible polynomial $r\in \mathcal{I}_K^G$ such that $F^{Q_G}=\prod G\ast r$. Observe that $r$ is $\operatorname{Stab}_G(r)\le G$ invariant and the $\operatorname{Stab}_G(r)$-orbits in $R_r$ are regular because the $G$-orbits in $R_{F^{Q_G}}$ are regular.  Consequentially $\operatorname{Stab}_G(r)$ has to be cyclic by Corollary \ref{ffgal} and  with Corollary \ref{maincoroll} we obtain \begin{equation*}
    \deg(r)=|\operatorname{Stab}_G(r)|\cdot \deg(F)\le \mu_G\cdot \deg(F)
\end{equation*} and \begin{equation*}
    |G\ast r|=\frac{|G|}{|\operatorname{Stab}_G(r)|}\ge \frac{|G|}{\mu_G}.
\end{equation*}
\end{proof}
\begin{corollary}
    If $G\le \PG(\F_q)$ is non-cyclic and $Q_G\in\F_q(x)$ is a quotient map for $G$. Then $F^{Q_G}$ is reducible for all $F\in \F_q[x]$.
\end{corollary}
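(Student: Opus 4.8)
The plan is to split on whether $F$ is irreducible, and to assume throughout that $F$ is monic and non-constant (reducibility is unaffected by a nonzero scalar, and the claim is vacuous for constants). The reducible case is immediate, and the irreducible case is reduced to the already-established Galois-theoretic facts about $G$-invariant polynomials over $\F_q$.

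Suppose first that $F$ is reducible, so $F=rt$ with $\deg(r),\deg(t)\ge 1$. Since $Q_G$ is a quotient map we have $\deg(g)=|G|>\deg(h)$, hence $Q_G(\infty)=\infty$ and $F(Q_G(\infty))=F(\infty)=\infty\neq 0$. Thus Lemma \ref{basicqg}(2) applies and gives $F^{Q_G}=r^{Q_G}t^{Q_G}$, where by Lemma \ref{basicqg}(1) the two factors have degrees $|G|\deg(r),|G|\deg(t)\ge 1$. So $F^{Q_G}$ factors nontrivially and is reducible.

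The substantial case is $F$ irreducible, which I treat by contradiction: assume $F^{Q_G}$ is irreducible. Because the smallest non-cyclic group has order $4$, non-cyclicity of $G$ forces $|G|\ge 4$, whence $\deg(F^{Q_G})=|G|\deg(F)\ge 4$ by Lemma \ref{basicqg}(1); in particular $F^{Q_G}$ is not a quadratic, so the pathological quadratic invariants of the remark following Corollary \ref{ffgal} are excluded. Next I establish regularity of the orbits: since $\F_q$ is perfect, both $F$ and the irreducible $F^{Q_G}$ are separable, so $|R_F|=\deg(F)$ and $|R_{F^{Q_G}}|=\deg(F^{Q_G})=|G|\deg(F)$. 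Comparing this with the orbit count $|R_{F^{Q_G}}|=|R_F|\cdot|G\circ v|$ from Corollary \ref{size} (for any $v\in R_{F^{Q_G}}$) forces $|G\circ v|=|G|$, i.e. every $G$-orbit in $R_{F^{Q_G}}$ is regular. By Lemma \ref{qginv} the polynomial $F^{Q_G}$ is $G$-invariant, and it is now seen to be irreducible and separable with only regular $G$-orbits, so Corollary \ref{ffgal} yields that $G$ is cyclic, contradicting the hypothesis. Hence $F^{Q_G}$ is reducible, completing the irreducible case.

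I expect the only genuinely delicate point to be this passage to regular orbits in the irreducible case, where one must rule out the quadratic-invariant phenomenon of the remark after Corollary \ref{ffgal}: there a non-cyclic (dihedral) stabilizer fixes an irreducible quadratic whose two roots form a single non-regular orbit, so the hypothesis of Corollary \ref{ffgal} fails. The degree bound $\deg(F^{Q_G})\ge 4$ coming from $|G|\ge 4$, combined with the separability-driven counting of Corollary \ref{size}, is precisely what rules this out; once regularity is secured the contradiction is immediate. One could alternatively route the irreducible case through Theorem \ref{main1} and Theorem \ref{k=1}, observing that irreducibility of $F^{Q_G}$ would force $\operatorname{Stab}_G(r)=G$ and hence a $G$-invariant irreducible factor equal to $F^{Q_G}$, but the separability and counting argument above is the most direct path to Corollary \ref{ffgal}.
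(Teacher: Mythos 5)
Your proof is correct, and it takes a genuinely more direct route than the paper's. The paper handles irreducible $F$ by splitting on whether $F\in\mathcal{NC}^{Q_G}$: in the non-conformal case $F^{Q_G}=(\prod G\ast r)^k$ with $k>1$ is visibly reducible, and otherwise it invokes the quantitative Theorem \ref{facfin} (at least $|G|/\mu_G>1$ irreducible factors), whose own proof applies Corollary \ref{ffgal} to $\operatorname{Stab}_G(r)$ for an irreducible factor $r$ of $F^{Q_G}$. You instead argue by contradiction on $F^{Q_G}$ itself: irreducibility over the perfect field $\F_q$ gives separability, the count $|R_{F^{Q_G}}|=|R_F|\cdot|G\circ v|$ from Corollary \ref{size} then forces every orbit to be regular, and Corollary \ref{ffgal} applied to the $G$-invariant polynomial $F^{Q_G}$ with the full group $G$ yields that $G$ is cyclic — contradiction. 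Both arguments bottom out in Corollary \ref{ffgal}, but yours bypasses $\mathcal{NC}^{Q_G}$, Theorem \ref{k=1} and Theorem \ref{facfin} entirely, at the cost of not recovering the lower bound $|G|/\mu_G$ on the number of factors that the paper's route gives for free. Two small remarks: your observation that $|G|\ge 4$ excludes the quadratic pathology is superfluous, since the regularity of the orbits is already forced by the separability count rather than by any degree consideration; and your explicit treatment of reducible, non-monic and constant $F$ is more careful than the paper's one-line reduction ("$F^{Q_G}$ irreducible implies $F$ irreducible"), which is welcome given that the statement quantifies over all $F\in\F_q[x]$.
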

\begin{proof}
    Note that ''$F^{Q_G}$ is irreducible'' implies ''$F$ is irreducible'', so we can focus on $F$ being irreducible. If $F\in \mathcal{I}_K\setminus \mathcal{NC}^{Q_G}$, then $F^{Q_G}$ has at least $|G|/\mu_G$ irreducible factors by Theorem \ref{facfin} and $|G|/\mu_G>1$ if $G$ is non-cyclic. If $F\in\mathcal{NC}^{Q_G}$, then \begin{equation*}
        F^{Q_G}=\prod(G\ast r)^k
    \end{equation*} for $r\in\mathcal{I}_K^G$ and $k>1$.
\end{proof}
\section{Examples of Invariant Polynomials} In this section we show how our result apply to specific subgroups of $\PG(K)$ where $\operatorname{char}(K)>0$. 
\subsection{Unipotent Subgroups} Consider a field $K$ with $\operatorname{char}(K)=p>0$ and $q=p^l$ for $l>0$. Moreover assume $\F_q\subseteq K$ and let $V\le_q K$ be a $\F_q$-subspace of $K$ of dimension $n\in\mathbb{N}\setminus\{0\}$. For the subspace $V$ we define \begin{equation*}
    \overset{\sim}{V}:=\left\{\left[\left(\begin{array}{cc}
     1& v\\
     0& 1
     \end{array}\right)\right]: v\in V\right\}\le\PG(K).
\end{equation*} Observe that $\overset{\sim}{V}\cong \F_q^n$ as groups, so $\overset{\sim}{V}$ is abelian and every non-trivial element $[A]\in \overset{\sim}{V}$ has order $p$. Additionally, $\overset{\sim}{V}\subseteq \operatorname{Stab}_{\PG(K)}(\infty)$, so $\NR_K^{\overset{\sim}{V}}$ is just the set of monic polynomials over $K$. A quotient map is the to $V$ associated subspace polynomial (see \cite[§10]{bluher1}) \begin{equation}
    Q_{\overset{\sim}{V}}(x)=\prod_{v\in V}(x-v).
\end{equation} The set $P_{\overset{\sim}{V}}$ only contains $\infty$, so $\mathcal{P}_{\overset{\sim}{V}}=\varnothing=\mathcal{NC}^{Q_{\overset{\sim}{V}}}$. This makes the classification of $\overset{\sim}{V}$-invariant polynomials especially nice: \begin{corollary}
For every monic $\overset{\sim}{V}$-invariant polynomial $f\in \NR_K^{\overset{\sim}{V}}$ exists a unique monic polynomial $F\in K[x]$ such that \begin{equation*}
    f(x)=F\left(\prod_{v\in V}(x-v)\right).
\end{equation*} 
\end{corollary}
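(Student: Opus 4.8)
The plan is to recognize this corollary as the degenerate instance of Theorem \ref{fac2} (the detailed form of Theorem \ref{arbginv}) in which the correction factor built from the non-conformal polynomials disappears altogether. Recall that in Theorem \ref{fac2} the factorization of a $G$-invariant $f$ is $f=\left(\prod_{i=1}^l(\prod G\ast r_i)^{k_i}\right)\cdot F^{Q_G}$, where $r_1,\dots,r_l$ range over the orbit polynomials attached to the members of $\mathcal{NC}^{Q_G}$. Hence the first and essentially only step is to confirm, as already observed in the text preceding the corollary, that $\mathcal{NC}^{Q_{\overset{\sim}{V}}}=\varnothing$ for $G=\overset{\sim}{V}$, which collapses that leading product to the empty product $1$.

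To establish this vanishing I would argue directly from the translation structure of $\overset{\sim}{V}$. Every non-identity element of $\overset{\sim}{V}$ is a translation $x\mapsto x+v$ with $v\in V\setminus\{0\}$, whose unique fixed point in $\overline{K}\cup\{\infty\}$ is $\infty$; by Lemma \ref{deg2} this forces $P_{\overset{\sim}{V}}=\{\infty\}$. Since $\infty$ lies in the orbit $\overset{\sim}{V}\circ\infty$ and, by definition (\ref{shoeshpg}), $\mathcal{P}_{\overset{\sim}{V}}\subseteq\mathcal{I}_K^{\overset{\sim}{V}}$ contains no polynomial with a root in $\overset{\sim}{V}\circ\infty$, we obtain $\mathcal{P}_{\overset{\sim}{V}}=\varnothing$. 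Lemma \ref{nonconformlem} then gives $|\mathcal{NC}^{Q_{\overset{\sim}{V}}}|\le|\mathcal{P}_{\overset{\sim}{V}}|=0$, so indeed there are no $Q_{\overset{\sim}{V}}$-non-conformal polynomials.

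With this in hand I would invoke Theorem \ref{fac2} directly. Because $\overset{\sim}{V}\subseteq\operatorname{Stab}_{\PG(K)}(\infty)$, the set $\NR_K^{\overset{\sim}{V}}$ is exactly the set of monic polynomials over $K$, so any monic $\overset{\sim}{V}$-invariant $f$ lies in $\NR_K^{\overset{\sim}{V}}$ and the theorem applies. As the index set of non-conformal factors is empty, the factorization reduces to $f=F^{Q_{\overset{\sim}{V}}}$ for a unique monic $F\in K[x]$, the uniqueness being part of the conclusion of Theorem \ref{fac2}. Finally, since $Q_{\overset{\sim}{V}}(x)=\prod_{v\in V}(x-v)$ is a polynomial, its denominator is $h=1$, whence $F^{Q_{\overset{\sim}{V}}}(x)=h(x)^{\deg(F)}F(Q_{\overset{\sim}{V}}(x))=F\left(\prod_{v\in V}(x-v)\right)$, which is precisely the asserted identity.

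I expect no genuine obstacle here: the entire weight of the statement rests on the Main Theorem and on Theorem \ref{arbginv}, both already proved, and the specialization only requires the routine verification that $\mathcal{NC}^{Q_{\overset{\sim}{V}}}=\varnothing$, which reduces immediately to the fixed-point structure of translations via Lemma \ref{deg2}. If a self-contained argument were preferred, one could instead note that $\overset{\sim}{V}$-invariance of a monic $f$ amounts to $f(x+v)=f(x)$ for all $v\in V$ and prove by a degree count that such translation-invariant monic polynomials are exactly the image of $F\mapsto F(Q_{\overset{\sim}{V}})$, with injectivity on monic polynomials; but routing through Theorem \ref{fac2} is cleaner and is the approach I would take.
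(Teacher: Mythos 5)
Your argument is correct and follows the same route as the paper: the text immediately preceding the corollary observes that $P_{\overset{\sim}{V}}=\{\infty\}$, hence $\mathcal{P}_{\overset{\sim}{V}}=\varnothing=\mathcal{NC}^{Q_{\overset{\sim}{V}}}$, and the corollary is then the specialization of Theorem \ref{fac2} with the leading product empty. Your additional remarks (the fixed-point computation for translations via Lemma \ref{deg2}, and $h=1$ turning $F^{Q_{\overset{\sim}{V}}}$ into $F(Q_{\overset{\sim}{V}}(x))$) just make explicit what the paper leaves implicit.
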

\begin{remark}
This result is already known, see \cite[Theorem 2.5.]{ReisGLq}. There, $\overset{\sim}{V}$-invariant polynomials are called $V$-translation invariant polynomials since \begin{equation*}
    [A]\ast f(x)=f(x+v)
\end{equation*} for \begin{equation*}
    A=\left(\begin{array}{cc}
     1& v\\
     0& 1
     \end{array}\right).
\end{equation*} Even though the proof of Theorem 2.5. is only stated for finite fields, the assumption that $K$ is finite is not used at all so it also holds if $K$ is infinite. We gave an alternative proof of this result.
\end{remark}
Next we look at the factorization of $F(Q_{\overset{\sim}{V}}(x))$.
\begin{lemma}\label{unipotentlem}
Let $F\in\mathcal{I}_K$, then we obtain: \begin{enumerate}
    \item All irreducible factors of $F(Q_{\overset{\sim}{V}}(x))$ have the same stabilizer in $\overset{\sim}{V}$, i.e. there is $W\le V$ such that all irreducible factors of $F(Q_{\overset{\sim}{V}}(x))$ are $\overset{\sim}{W}$-invariant.
    \item Let $F(Q_{\overset{\sim}{V}}(x))=\prod(\overset{\sim}{V}\ast r)$ for an $r\in \mathcal{I}_K$ and $\operatorname{Stab}_{\overset{\sim}{V}}(r)=\overset{\sim}{W}$ for $W\le V$. Moreover let $v_1,\ldots, v_k\in V$ be a complete set of representatives for $V/W$, then \begin{equation}\label{cosetfac}
        F(Q_{\overset{\sim}{V}}(x))=\prod\limits_{i=1}^k r(x+v_i). 
    \end{equation}
\end{enumerate} 
\end{lemma}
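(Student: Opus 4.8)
The plan is to obtain both statements directly from the orbit description of $F^{Q_{\overset{\sim}{V}}}$ provided by Theorem \ref{main1}, the decisive extra feature being that $\overset{\sim}{V}$ is abelian. First I would note that $\overset{\sim}{V}\subseteq\operatorname{Stab}_{\PG(K)}(\infty)$, so the denominator of $Q_{\overset{\sim}{V}}$ is constant and $F^{Q_{\overset{\sim}{V}}}=F(Q_{\overset{\sim}{V}}(x))$ is an honest composition; by Theorem \ref{main1} the set of its irreducible factors is a single orbit $\overset{\sim}{V}\ast r$ with $r\in\mathcal{I}_K$ (the exponent $k$ plays no role in (1)). The key step is the standard fact that within one orbit the point stabilizers are mutually conjugate subgroups; since $\overset{\sim}{V}$ is abelian these conjugates all collapse to a single subgroup $\operatorname{Stab}_{\overset{\sim}{V}}(r)$. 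Transporting it through the group isomorphism $v\mapsto[A_v]$, $A_v=\left(\begin{smallmatrix}1&v\\0&1\end{smallmatrix}\right)$, between $(V,+)$ and $\overset{\sim}{V}$ produces an additive subgroup $W\le V$ with $\operatorname{Stab}_{\overset{\sim}{V}}(r)=\overset{\sim}{W}$, and every factor, lying in the orbit, inherits this stabilizer and is therefore $\overset{\sim}{W}$-invariant. This settles (1).

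For (2) I would turn the abstract bijection ``orbit $\leftrightarrow V/W$'' into the explicit product. Recall the identity $[A_v]\ast r(x)=r(x+v)$ noted above, so that $\overset{\sim}{V}\ast r=\{\,r(x+v):v\in V\,\}$. Two translates agree, $r(x+v)=r(x+v')$, precisely when $r(y+(v-v'))=r(y)$, i.e. when $v-v'\in W$; hence $v\mapsto r(x+v)$ factors through a bijection $V/W\to\overset{\sim}{V}\ast r$. Choosing coset representatives $v_1,\dots,v_k$ of $V/W$ (so $k=[V:W]=|\overset{\sim}{V}\ast r|$, consistent with Corollary \ref{maincoroll}) and substituting into the assumed factorization $F(Q_{\overset{\sim}{V}}(x))=\prod(\overset{\sim}{V}\ast r)$ yields $F(Q_{\overset{\sim}{V}}(x))=\prod_{i=1}^k r(x+v_i)$, as claimed.

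The one genuine subtlety --- and the step I would be most careful about --- is the exact nature of $W$: it is only an additive subgroup of $V$, not an $\F_q$-subspace in general. For instance, over a field of characteristic $2$ containing $\F_4$ one has $Q_{\overset{\sim}{\F_4}}(x)=x^4-x$, and there exist irreducible $F$ whose transform $F(x^4-x)$ splits into two irreducible factors, each invariant under $x\mapsto x+1$ yet moved by $x\mapsto x+\theta$ for $\theta$ a generator of $\F_4$ over $\F_2$; then $W=\{0,1\}$ is merely an $\F_2$-subgroup. Consequently the argument must rest solely on the additive (equivalently $\F_p$-) group structure of $W$, and the whole lemma hinges on the commutativity of $\overset{\sim}{V}$, which is what upgrades the a priori only-conjugate stabilizers to a single subgroup $\overset{\sim}{W}$ and thereby permits one uniform coset description. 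I note finally that since $\mathcal{NC}^{Q_{\overset{\sim}{V}}}=\varnothing$ the hypothesis $k=1$ of (2) is automatically met for every $F$, so the displayed factorization in fact holds unconditionally.
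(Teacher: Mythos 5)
Your proposal is correct and follows essentially the same route as the paper: both parts rest on Theorem \ref{main1} giving a single orbit $\overset{\sim}{V}\ast r$, on the conjugacy of stabilizers within an orbit collapsing to equality because $\overset{\sim}{V}$ is abelian, and on the equivalence $r(x+v)=r(x+u)\Leftrightarrow u-v\in W$ combined with the factor count from Corollary \ref{maincoroll}. Your added observations (that $W$ is a priori only an additive subgroup of $V$, and that $\mathcal{NC}^{Q_{\overset{\sim}{V}}}=\varnothing$ forces $k=1$ unconditionally) are consistent with the paper's setup and do not change the argument.
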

\begin{proof}
By Theorem \ref{main1} and \ref{k=1} we know that $F(Q_{\overset{\sim}{V}}(x))=\prod (\overset{\sim}{V}\ast r)$ for an $r\in\mathcal{I}_K^{\overset{\sim}{V}}=\mathcal{I}_K$. All elements in the same orbit have conjugated stabilizers. Since $\overset{\sim}{V}$ is abelian every subgroup is normal, thus $\operatorname{Stab}_{\overset{\sim}{V}}(t)=\operatorname{Stab}_{\overset{\sim}{V}}(r)$ for all $t\in \overset{\sim}{V}\ast r$, so the first part is proved.

For the second item notice that for $v,u\in V$ we have \begin{equation*}
   r(x+v)=r(x+u) \Leftrightarrow r(x)=r(x+(u-v)) \Leftrightarrow u-v\in W,
\end{equation*} hence all $r(x+v_i)$ are different irreducible polynomials for $v_1,\ldots,v_k$ a complete set of representatives of $V/W$. Since $F(Q_{\overset{\sim}{V}}(x))$ has exactly $|V|/|W|$ irreducible factors by Corollary \ref{maincoroll} equation (\ref{cosetfac}) follows. 
\end{proof}
Note that Corollary \ref{facunipot} is an immediate consequence of Theorem \ref{facfin}.
\subsection{Borel-Subgroup}
Here we consider the \textit{Borel}-subgroup of $\PG(q)$ in fields $K$ with $\F_q\subseteq K$. This groups is defined as \begin{equation*}
    B(q)=\left\{\left[\left(\begin{array}{cc}
     a& b\\
     0& 1
     \end{array}\right)\right]: a\in\F_q^{\ast},b\in\F_q\right\}.
\end{equation*} The transformation with $[A]\in B(q)$ looks like  \begin{equation*}
    [A]\ast f(x)=a^{-\deg(f)}\cdot f(ax+b)
\end{equation*} for \begin{equation}\label{bq}
    A=\left(\begin{array}{cc}
     a& b\\
     0& 1
     \end{array}\right).  
\end{equation} The group can be seen as \begin{equation*}
    B(q)\cong \F_q\rtimes\F_q^{\ast}
\end{equation*} where the multiplication is defined as $$(b,a)\cdot (d,c)=(b+ad,ac)$$ and for $A\in\operatorname{GL}_2(K)$ as in (\ref{bq}) we have \begin{equation*}
    \operatorname{ord}([A])=\begin{cases}
    \operatorname{ord}_{\F_q^{\ast}}(a),& \text{ if } a\neq 1\\
    p,& \text{ if }a=1\text{ and } b\neq 0
    \end{cases}
\end{equation*} Moreover $B(q)=\operatorname{Stab}_{\PG(q)}(\infty)$, so $\NR_K^{B(q)}$ is the set of monic polynomials in $\F_q[x]$. We calculate a quotient map for $B(q)$ using \begin{lemma}[{\cite[Theorem 3.10]{bluher1}}]\label{guide}
Let $G\le\PG(K)$ be a finite subgroup and  for $v\in \overline{K}$ let \begin{equation*}
    g_v(x):=\prod\limits_{u\in G\circ v}(x-u)^{m_v} \text{ and } h_{\infty}(x)=\prod\limits_{u\in G\circ \infty\setminus\{\infty\}}(x-u)^{m_{\infty}},
\end{equation*}
where
$m_u:=|\operatorname{Stab}_G(u)|$ for $u\in\overline{K}\cup\{\infty\}$. Then there is $w\in \overline{K}$ such that \begin{equation*}
    Q_G(x):=\frac{g_v(x)}{h_{\infty}(x)}+w\in K(x)
\end{equation*}
is a quotient map for $G$. Conversely, if there is $w\in \overline{K}$ with $\frac{g_v(x)}{h_{\infty}(x)}+w\in K(x)$, then $Q(x):=\frac{g_v(x)}{h_{\infty}(x)}+w$ is a quotient map for $G$.
\end{lemma}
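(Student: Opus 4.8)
The plan is to express the given function $R:=g_v/h_\infty$ in terms of an already-available quotient map and to control everything through its zeros and poles on $\K$. Throughout I assume $G\circ v\neq G\circ\infty$, which since $v\in\overline K$ is the only sensible case (otherwise $\infty\in G\circ v$ and $g_v$ is not defined). Let $Q_0=g_0/h_0$ be a quotient map for $G$, whose existence with the normalization $g_0$ monic and $0\le\deg(h_0)<\deg(g_0)=|G|$ is granted by the facts recalled in the introduction; in particular $Q_0(\infty)=\infty$. By Theorem \ref{bij} the fibres of $Q_0$ are exactly the $G$-orbits in $\K$, so the poles of $Q_0$ are precisely the points of $G\circ\infty$ and the preimage of the value $Q_0(v)\in\overline K$ is precisely $G\circ v$.

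First I would pin down the multiplicities. Because $Q_0\circ[A]=Q_0$ and $[A]$ is an automorphism of $\K$ for each $[A]\in G$, the zero multiplicity of $Q_0-Q_0(v)$ is constant along the orbit $G\circ v$; since the total number of zeros is $\deg Q_0=|G|$ and the orbit has $|G\circ v|=|G|/m_v$ points, each multiplicity equals $m_v$. The same argument gives pole multiplicity $m_\infty$ along $G\circ\infty$ (including a pole of order $m_\infty$ at $\infty$). Hence
\[
\operatorname{div}(Q_0-Q_0(v))=m_v\sum_{u\in G\circ v}(u)-m_\infty\sum_{u\in G\circ\infty}(u).
\]
On the other hand $g_v$ is monic of degree $m_v\cdot|G\circ v|=|G|$ with these same zeros and a pole of order $|G|$ at $\infty$, while $h_\infty$ is monic of degree $|G|-m_\infty$ with zeros $m_\infty\sum_{u\in G\circ\infty\setminus\{\infty\}}(u)$ and a pole of that order at $\infty$; subtracting shows $\operatorname{div}(R)$ equals the right-hand side above as well. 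Two rational functions on $\K$ with equal divisors agree up to a nonzero constant $c$, so $R=c\,(Q_0-Q_0(v))$; comparing leading coefficients at $\infty$ (where $R\sim x^{m_\infty}$ with leading coefficient $1$, and $Q_0$ has leading coefficient equal to the reciprocal of the leading coefficient of $h_0$) identifies $c$ as the leading coefficient of $h_0$, which lies in $K^{\ast}$ because $Q_0\in K(x)$.

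This single identity yields both directions at once. Since $Q_0-Q_0(v)$ is $G$-invariant, so is its constant multiple $R$, and hence so is $R+w$ for every $w$. For the forward direction I would take $w:=c\,Q_0(v)\in\overline K$, whence $R+w=c\,Q_0\in K(x)$; written as a reduced fraction this is $(g_v+wh_\infty)/h_\infty$ with monic numerator of degree $|G|$ and denominator of smaller degree, i.e. a quotient map. For the converse, if some $w\in\overline K$ satisfies $R+w\in K(x)$, then $R+w$ is a $G$-invariant element of $K(x)$, i.e. of $K(x)^G$, of degree $|G|$, hence a generator of $K(x)^G$ by the fact, recalled in the introduction, that every degree-$|G|$ element of $K(x)^G$ is a generator; its numerator is monic of degree $|G|$ over $h_\infty$, so it is again a quotient map.

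The step I expect to carry the weight is making the divisor of $Q_0-Q_0(v)$ exactly right, i.e. justifying the constant multiplicities $m_v$ and $m_\infty$ along the two orbits rather than merely that the zeros and poles are supported there; the orbit-constancy plus degree count above is the cleanest route and avoids any appeal to ramification theory, which would otherwise be delicate in characteristic $p$. The other point demanding care is purely one of fields of definition: $h_\infty\in K[x]$ because $G\circ\infty$ is stable under $\Gal(\overline K/K)$ (as $\infty\in K\cup\{\infty\}$ and $G\le\PG(K)$), whereas $g_v$ generally has coefficients only in $\overline K$ — this is exactly why a nonzero additive constant $w$ is unavoidable in general, and it is what the statement is really encoding.
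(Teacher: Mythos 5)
Your argument is correct, but note that the paper itself offers no proof of this lemma at all: it is imported verbatim as Theorem~3.10 of \cite{bluher1}, so there is nothing internal to compare against. What you have written is a genuine standalone derivation of the Bluher result from the two other facts the paper does import, namely the existence of a normalized quotient map $Q_0=g_0/h_0$ and Theorem~\ref{bij}. The route through divisors on the projective line is sound: the fibre description from Theorem~\ref{bij} pins down the supports, the orbit-constancy of multiplicities under precomposition with the automorphisms $[A]$ plus the degree count $\deg(g_0-\alpha h_0)=|G|$ pins down the multiplicities as $m_v$ and $m_\infty$, and the identity $R=a(h_0)\,(Q_0-Q_0(v))$ then delivers both directions at once (indeed it shows $h_0=a(h_0)h_\infty$ and $g_v+wh_\infty=g_0$, which is a pleasant byproduct). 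Your closing remarks on the two delicate points are well judged: the multiplicity constancy is exactly what replaces any appeal to ramification, and the field-of-definition issue ($h_\infty\in K[x]$ automatically because $G\circ\infty\subseteq K\cup\{\infty\}$, whereas $g_v$ need only lie in $\overline{K}[x]$) is precisely why the additive constant $w$ appears in the statement. The only cosmetic caveat is that the lemma's hypothesis should be read with $v\notin G\circ\infty$, as you observe, since otherwise $g_v$ is not defined; and the one-line fact that an automorphism of $\overline{K}\cup\{\infty\}$ preserves zero and pole orders could be stated explicitly, but it is standard.
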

Let $v\in\F_{q^2}\setminus\F_q$, then $B(q)\circ v=\F_{q^2}\setminus\F_q$ because $\{1,v\}$ is a $\F_q$ basis of $\F_{q^2}$, thus $$\{a\cdot v+b|a\in\F_q^{\ast},b\in\F_q\}=\F_{q^2}\setminus\F_q.$$ Hence, a quotient map is given by \begin{align*}
    Q_{B(q)}(x)&=\prod\limits_{w\in B(q)\circ v}(x-w)^{|\operatorname{Stab}_{B(q)}(v)|}=\prod\limits_{w\in\F_{q^2}\setminus\F_q}(x-w)^{|\operatorname{Stab}_{B(q)}(v)|}\\&=\prod\limits_{g\in\mathcal{I}_q^2}g(x).
\end{align*}Recall that $\mathcal{I}_q^2$ is the set of monic irreducible polynomials of degree $2$ over $\F_q$. That $|\operatorname{Stab}_{B(q)}(v)|=1$ holds is a consequence of $av+b=v$ only having solutions $v\in\F_q\cup\{\infty\}$ if $(a,b)\neq (1,0)$. 

If $q\neq 2$ then $P_{B(q)}$ is equal to $\F_q\cup\{\infty\}$ because $A$ as in (\ref{bq}) fixes $-b/(a-1)$. For $q=2$ we have $B(2)\cong \F_2$ and $P_{B(2)}=\{\infty\}$; so this case belongs to the previous example. The group $B(q)$ acts transitively on $P_G\setminus\{\infty\}=\F_q$ because for fixed $c\in\F_q$ and $b\in\F_q$ arbitrarily take the matrix \begin{equation*}
    B=\left(\begin{array}{cc}
     1& b-c\\
     0& 1
     \end{array}\right)
\end{equation*} and we see $[B]\circ c=c+(b-c)=b$. Consequentially, $B(q)$ also acts transitively on $\mathcal{P}_G$, which consists of polynomials of the form $x-c$ for $c\in \F_q$. Hence there is a monic irreducible polynomial $F$ of degree $1$ (so $F=x-\alpha$ for $\alpha\in K$) and an exponent $k>1$ such that \begin{equation*}
   \left(\prod\limits_{g\in\mathcal{I}_q^2}g(x)\right)-\alpha= F^{Q_{B(q)}}(x)=\left(\prod\limits_{v\in\F_q}(x-v)\right)^k=(x^q-x)^k.
\end{equation*} We can deduce the exponent $k$ from comparing the degree of the polynomials on both sides of the equality. The left side has degree $2\cdot N_q(2)=2\cdot(\frac{1}{2}(q^2-q))=q^2-q=q(q-1)$, so $k=q-1$. To obtain $\alpha$ we calculate \begin{align*}
    (x^q-x)^{q-1}=\frac{x^{q^2}-x^q}{x^q-x}=\frac{x^{q^2}-x}{x^q-x}-\frac{x^q-x}{x^q-x}=\prod\limits_{g\in\mathcal{I}_q^2}g(x)-1
\end{align*} so $\alpha=1$ and thus \begin{equation*}
    \left(\prod\limits_{g\in\mathcal{I}_q^2}g(x)\right)-1=(x^q-x)^{q-1}.
\end{equation*} Hence $(x-1)\in \mathcal{NC}^{Q_{b(q)}}$ and $\delta_{Q_{B(q)}}(x-1)=\mathcal{P}_{B(q)}$, so $\mathcal{NC}^{Q_{b(q)}}=\{x-1\}$ by Lemma \ref{nonconformlem}.
With this we can characterize all $B(q)$-invariant polynomials as follows: \begin{corollary}\label{bqchar}
For every monic $B(q)$-invariant polynomial $f\in \NR_K^{B(q)}$ exists a unique monic polynomial $F\in K[x]$ and $m\in\mathbb{N}$ with $0\le m<q-1$ such that \begin{equation*}
    f(x)=(x^q-x)^m \cdot F\left(\prod_{g\in\mathcal{I}_q^2}g(x)\right).
\end{equation*} 
\end{corollary}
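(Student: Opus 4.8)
The plan is to obtain this as a direct specialization of the general factorization result, Theorem \ref{fac2}, to the case $G=B(q)$. That theorem already asserts that every $B(q)$-invariant polynomial $f\in\NR_K^{B(q)}$ admits a unique factorization
\[
f=\left(\prod_{i=1}^l(\prod G\ast r_i)^{k_i}\right)\cdot F^{Q_{B(q)}}
\]
with $0\le k_i<n_i$, where the $F_i$ run over all $Q_{B(q)}$-non-conformal polynomials in $\mathcal{NC}^{Q_{B(q)}}$, the orbit $G\ast r_i=\delta_{Q_{B(q)}}(F_i)$, and $n_i$ is the exponent with $F_i^{Q_{B(q)}}=(\prod G\ast r_i)^{n_i}$. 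So the only work left is to make the three pieces of data $\{F_i\}$, $\prod(G\ast r_i)$, and $n_i$ explicit for the Borel subgroup and to substitute them.

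First I would collect the facts already computed in this subsection. We have established that $\mathcal{NC}^{Q_{B(q)}}=\{x-1\}$, so the index set is a single element, $l=1$. The associated orbit is $\mathcal{P}_{B(q)}$, the set of minimal polynomials of the non-regular points $P_{B(q)}\setminus\{\infty\}=\F_q$; since every such point has degree one over $K$, this orbit is exactly $\{x-c:c\in\F_q\}$ and hence
\[
\prod(G\ast r_1)=\prod_{c\in\F_q}(x-c)=x^q-x.
\]
The exponent is read off from the identity $(x-1)^{Q_{B(q)}}=(x^q-x)^{q-1}$ derived above, giving $n_1=q-1$.

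Next I would substitute. Because $B(q)=\operatorname{Stab}_{\PG(q)}(\infty)$, the orbit $G\circ\infty$ is trivial, so the denominator of $Q_{B(q)}$ is $1$ and $Q_{B(q)}(x)=\prod_{g\in\mathcal{I}_q^2}g(x)$ is a genuine polynomial; consequently $F^{Q_{B(q)}}(x)=F\bigl(\prod_{g\in\mathcal{I}_q^2}g(x)\bigr)$ for every monic $F$. Writing $m:=k_1$ and feeding the data into the displayed factorization of Theorem \ref{fac2} then yields precisely
\[
f(x)=(x^q-x)^m\cdot F\left(\prod_{g\in\mathcal{I}_q^2}g(x)\right),\qquad 0\le m<q-1,
\]
and the uniqueness of $F$ and of $m$ is inherited verbatim from the uniqueness clause of Theorem \ref{fac2}.

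The argument is thus essentially bookkeeping, and the only genuine obstacle is confined to the preparatory computations rather than to the corollary itself: one must be certain that $x-1$ is the single non-conformal polynomial and that its exponent is exactly $q-1$. The first point rests on $B(q)$ acting transitively on $\mathcal{P}_{B(q)}$, so that the at-most-$|P_G|$ bound from Lemma \ref{nonconformlem} collapses to one orbit and, via $\delta_{Q_{B(q)}}$, to one polynomial; the second on the degree count $\deg Q_{B(q)}=q^2-q=(q-1)\deg(x^q-x)$ together with the explicit evaluation $\prod_{g\in\mathcal{I}_q^2}g-1=(x^q-x)^{q-1}$. Once these are in hand, as they are above, the corollary follows with no further estimation.
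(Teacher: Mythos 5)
Your proposal is correct and follows exactly the route the paper intends: the corollary is stated as an immediate application of Theorem \ref{fac2} once the subsection has established $\mathcal{NC}^{Q_{B(q)}}=\{x-1\}$, $\delta_{Q_{B(q)}}(x-1)=\mathcal{P}_{B(q)}$ with $\prod\mathcal{P}_{B(q)}=x^q-x$, and $(x-1)^{Q_{B(q)}}=(x^q-x)^{q-1}$, so $n_1=q-1$. Your bookkeeping of these three pieces of data matches the paper's preparatory computations, and the uniqueness claim is indeed inherited directly from Theorem \ref{fac2}.
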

\begin{remark}
The polynomial \begin{equation*}
    Q(x)=(x^q-x)^{q-1}
\end{equation*} is another quotient map for $B(q)$. For $Q$ the sets $P_{B(q)}$ and $\mathcal{P}_{B(q)}$ remain the same (notice that these sets are always the same regardless which quotient map we choose), just $\mathcal{NC}^Q=\{x\}$ is different. Therefore we can reformulate the previous Corollary in the following way: 
\begin{quote}
       For every monic $B(q)$-invariant polynomial $f\in \NR_K^{B(q)}$ exists a unique monic polynomial $F\in K[x]$ and $m\in\mathbb{N}$ with $0\le m<q-1$ such that \begin{equation*}
    f(x)=(x^q-x)^m \cdot F\left((x^q-x)^{q-1}\right)
\end{equation*}\end{quote} Changing the quotient maps for $B(q)$ in the representation of $B(q)$-invariant polynomials is like changing the basis of a vector space. The polynomials $F$ are, in this analogy, like the coefficients of the vectors written as the linear combination of the basis elements.
\end{remark}
The factorization over finite $K$ can be explained with Theorem \ref{facfin} again: \begin{corollary}
Let $K=\F_{q^s}$ and $F\in\mathcal{I}_{q^s}$ for an $s\in\mathbb{N}\setminus\{0\}$ and $q=p^n$. If $F\neq x-1$ then $F^{Q_{B(q)}}$ has at least \begin{enumerate}
    \item $q$ irreducible factors if $q$ is not prime, i.e. $n>1$, or \item $q-1$ irreducible factors if $q$ is prime, i.e. $n=1$
\end{enumerate} Every such factor has a cyclic stabilizer in $B(q)$ and thus has degree at most $(q-1)\cdot\deg(F)$ if $q$ is not prime and $q\cdot \deg(F)$ if $q$ is prime.
\end{corollary}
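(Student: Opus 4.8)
The plan is to reduce the statement to a single application of Theorem~\ref{facfin} for the group $G=B(q)$ over the finite field $K=\F_{q^s}$, so the work splits into supplying the two inputs that theorem needs: the separability of $F^{Q_{B(q)}}$, and the values of $|B(q)|$ and of the maximal element order $\mu_{B(q)}$. The ground field $K=\F_{q^s}$ is perfect and contains $\F_q$, hence $B(q)\le\PG(\F_q)\le\PG(K)$ and the quotient map $Q_{B(q)}\in\F_q(x)\subseteq K(x)$ is at our disposal; once separability is checked the whole apparatus of Theorem~\ref{facfin} applies verbatim.

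First I would secure separability for every $F\neq x-1$. As computed above, the unique $Q_{B(q)}$-non-conformal polynomial is $x-1$, that is $\mathcal{NC}^{Q_{B(q)}}=\{x-1\}$; this rests only on the fact that $\mathcal{P}_{B(q)}=\{x-c:c\in\F_q\}$ is a single $B(q)$-orbit, namely $\delta_{Q_{B(q)}}(x-1)$, together with Lemma~\ref{nonconformlem}, and it stays valid over $K=\F_{q^s}$ since $\F_q\subseteq K$ keeps these minimal polynomials linear. Thus for $F\in\mathcal{I}_{q^s}$ with $F\neq x-1$ we have $F\notin\mathcal{NC}^{Q_{B(q)}}$, so Theorem~\ref{k=1} gives $F^{Q_{B(q)}}=\prod(B(q)\ast r)$, an orbit polynomial; as a squarefree product of distinct irreducibles over the perfect field $K$ it is separable. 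This is precisely the step that removes the ``$\deg(F)\ge 3$'' restriction of the remark following Theorem~\ref{facfin}, and I expect it to be the main obstacle: without the explicit determination $\mathcal{NC}^{Q_{B(q)}}=\{x-1\}$ the low-degree $F$ would not be covered by the generic separability criterion.

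With separability in hand I would feed in the numerology. Here $|B(q)|=q(q-1)$, and from the order formula for $B(q)\cong\F_q\rtimes\F_q^{\ast}$ every element has order either $\operatorname{ord}_{\F_q^{\ast}}(a)\mid q-1$ (when $a\neq 1$) or $p$ (when $a=1$, $b\neq 0$), so $\mu_{B(q)}=\max(q-1,p)$. I then split into cases. If $q$ is not prime ($n>1$) then $q-1=p^{n}-1>p$, so $\mu_{B(q)}=q-1$ and Theorem~\ref{facfin} yields at least $|B(q)|/\mu_{B(q)}=q$ irreducible factors; if $q$ is prime ($n=1$) then $q-1=p-1<p$, so $\mu_{B(q)}=p=q$ and we get at least $|B(q)|/\mu_{B(q)}=q-1$ factors, exactly as claimed.

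Finally the degree bound and the cyclicity are assembled together. By the proof of Theorem~\ref{facfin} (through Corollary~\ref{ffgal}) each irreducible factor $r$ of the separable $F^{Q_{B(q)}}$ has cyclic stabilizer $\operatorname{Stab}_{B(q)}(r)$, and Corollary~\ref{maincoroll} gives $\deg(r)=|\operatorname{Stab}_{B(q)}(r)|\cdot\deg(F)$. Since a cyclic subgroup is generated by a single element, $|\operatorname{Stab}_{B(q)}(r)|\le\mu_{B(q)}$, whence $\deg(r)\le\mu_{B(q)}\cdot\deg(F)$, that is $(q-1)\deg(F)$ when $q$ is not prime and $q\deg(F)$ when $q$ is prime. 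The degenerate case $q=2$ falls under the unipotent example, where $\mathcal{NC}^{Q_{B(q)}}$ is empty and the assertion is trivial, so nothing extra is needed there.
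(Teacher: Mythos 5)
Your proposal is correct and follows essentially the same route the paper intends: the corollary is stated as an immediate application of Theorem~\ref{facfin} with $|B(q)|=q(q-1)$ and $\mu_{B(q)}=\max(q-1,p)$, split according to whether $q$ is prime. Your explicit verification of the separability hypothesis for $F\neq x-1$ via $\mathcal{NC}^{Q_{B(q)}}=\{x-1\}$ (rather than relying on the $\deg(F)\ge 3$ remark) is a welcome piece of care that the paper leaves implicit.
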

\subsection{Projective General Linear Groups} Let $\operatorname{char}(K)=p>0$ and assume that $\F_q\subseteq K$ for $q$ a power of $p$, then $G:=\PG(\F_q)\le \PG(K)$.  First of all we need to calculate a quotient map for $G$. As shown in \cite[Example 3.12]{bluher1} \begin{equation}\label{pgqqg}
    Q_G(x)=\frac{\prod_{r\in\mathcal{I}_q^3}r(x)}{(\prod_{h\in\mathcal{I}_q^1}h(x))^{(q-1)q}} =\frac{\prod_{r\in\mathcal{I}_q^3}r(x)}{(x^q-x)^{(q-1)q}}
\end{equation} is a quotient map for $G$ over every field that contains $\F_q$, so also over $K$. The sets $\mathcal{I}_q^1$ and $\mathcal{I}_q^3$ are the sets of monic irreducible polynomials in $\F_q[x]$ of degree 1 and 3 respectively.  We want to determine $P_G,\mathcal{P}_G$ and $\mathcal{NC}^{Q_G}$. By Lemma \ref{deg2} we know that $P_G\subseteq \F_{q^2}\cup\{\infty\}$ since the equation \begin{equation*}
    [A]\circ v=v
\end{equation*} for $[A]\in\PG(q)$ is, in essence, a polynomial equation over $\F_q$, hence all solutions are algebraic over $\F_q$ (except $\infty$) and thus $[\F_q(v):\F_q]\le 2$. Indeed, $P_G=\F_{q^2}\cup\{\infty\}$ since for $a\in \F_q$ and $v\in\F_{q^2}$ we have \begin{align*}
    G\circ a& =\F_q\cup\{\infty\}\\
    G\circ v&=\F_{q^2}\setminus\F_q.
\end{align*} Therefore $\NR_K^G$ consists of monic polynomials in $K[x]$ with no roots in $\F_q$. The set $\mathcal{P}_G$ contains minimal polynomials of elements in $P_G\setminus (G\circ \infty)=\F_{q^2}\setminus\F_q$, thus we have two cases: \begin{enumerate}
    \item If $\F_{q^2}\not\subseteq K$, then $\mathcal{P}_G=\mathcal{I}_q^2$ and every $g\in \mathcal{I}_q^2$ is also irreducible over $K[x]$
    \item If $\F_{q^2}\subseteq K$, then \begin{equation*}
        \mathcal{P}_G=\{x-v|v\in\F_{q^2}\setminus\F_q\}.
    \end{equation*}
\end{enumerate} Since $B(q)\subseteq G$ and $B(q)$ acts transitively on $\mathcal{P}_G$ so does $G$ and thus $G\ast g= \mathcal{P}_G$ for all $g\in \mathcal{P}_G$. Since $\delta_{Q_G}(\mathcal{NC}^{Q_G})\subseteq \mathcal{P}_G$ we are looking for an irreducible polynomial $F\in\mathcal{I}_K$ such that $$F^{Q_G}=(\prod G\ast h)^k=(\prod \mathcal{P}_G)^k=(\prod\limits_{g\in\mathcal
{I}_q^2}g)^k$$ for an $h\in\mathcal{P}_G$ and $k>1$.  Looking back at Example 3.12 in \cite{bluher1} gives $F=x+1$ and $k=q+1$, thus $\mathcal{NC}^{Q_G}=\{x+1\}$. With Theorem \ref{fac2} we obtain \begin{corollary}\label{pgqchar}
For every monic $\PG(q)$-invariant polynomial $f\in \NR_K^G$ exists a unique monic polynomial $F\in K[x]$ and $m\in\mathbb{N}$ with $0\le m<q+1$ such that \begin{equation*}
    f(x)=\left(\prod\limits_{g\in\mathcal{I}_q^2}g(x)\right)^m \cdot \left((x^q-x)^{(q-1)q\cdot\deg(F)}F\left(\frac{\prod_{r\in\mathcal{I}_q^3}r(x)}{(x^q-x)^{(q-1)q}}\right)\right).
\end{equation*} 
\end{corollary}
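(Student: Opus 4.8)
The plan is to read off this corollary as a direct specialization of Theorem \ref{fac2} to $G=\PG(\F_q)$, once all the combinatorial data that theorem requires has been assembled. The preceding discussion already supplies everything needed: we have determined that $P_G=\F_{q^2}\cup\{\infty\}$, that the set $\mathcal{P}_G$ of minimal polynomials of the non-regular orbit points forms a single $G$-orbit (since $B(q)\subseteq G$ already acts transitively on it, so $G\ast g=\mathcal{P}_G$ for every $g\in\mathcal{P}_G$), and that $\mathcal{NC}^{Q_G}=\{x+1\}$ with $(x+1)^{Q_G}=\bigl(\prod_{g\in\mathcal{I}_q^2}g(x)\bigr)^{q+1}$. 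Translating this into the notation of Theorem \ref{fac2}, there is a single $Q_G$-non-conformal polynomial, so $l=1$, the associated orbit polynomial is $\prod(G\ast r_1)=\prod_{g\in\mathcal{I}_q^2}g(x)$, and the exponent is $n_1=q+1$.

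With this data in hand, I would invoke Theorem \ref{fac2} directly. The product over the index $i$ collapses to its single term, and the theorem then yields, for every monic $G$-invariant $f\in\NR_K^G$, a unique monic $F\in K[x]$ and a unique integer $m:=k_1$ with $0\le m<q+1$ such that
\begin{equation*}
    f(x)=\Bigl(\prod_{g\in\mathcal{I}_q^2}g(x)\Bigr)^{m}\cdot F^{Q_G}(x).
\end{equation*}

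It then remains only to make $F^{Q_G}$ explicit. Writing the quotient map (\ref{pgqqg}) as $Q_G=g/h$ with numerator $g(x)=\prod_{r\in\mathcal{I}_q^3}r(x)$ and denominator $h(x)=(x^q-x)^{(q-1)q}$, the defining formula $F^{Q_G}(x)=h(x)^{\deg(F)}F\bigl(g(x)/h(x)\bigr)$ turns the last factor into precisely the bracketed expression of the statement, completing the argument. I expect essentially no obstacle beyond the identifications already carried out in the subsection; the single point worth double-checking is that $l=1$, i.e.\ that $\mathcal{NC}^{Q_G}$ is genuinely a singleton. This follows because $\delta_{Q_G}$ is injective on $\mathcal{NC}^{Q_G}$ (from the proof of Lemma \ref{nonconformlem}) while its image lies in $\mathcal{P}_G/G$, and the transitivity of $G$ on $\mathcal{P}_G$ forces this quotient to consist of a single orbit.
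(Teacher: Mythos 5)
Your proposal is correct and follows the same route as the paper: the corollary is stated there as an immediate application of Theorem \ref{fac2} to the data computed in the preceding paragraphs ($\mathcal{NC}^{Q_G}=\{x+1\}$, $\delta_{Q_G}(x+1)=\mathcal{P}_G$ a single $G$-orbit with orbit polynomial $\prod_{g\in\mathcal{I}_q^2}g$ and exponent $n_1=q+1$), followed by writing out $F^{Q_G}$ from the explicit quotient map (\ref{pgqqg}). Your extra check that $\mathcal{NC}^{Q_G}$ is a singleton, via injectivity of $\delta_{Q_G}$ on $\mathcal{NC}^{Q_G}$ and transitivity of $G$ on $\mathcal{P}_G$, is exactly the justification the paper gives via Lemma \ref{nonconformlem}.
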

For the factorization over finite fields we shortly recall the 3 types of conjugacy classes of cyclic subgroups of $\PG(\F_q)$ (for reference see \cite[Proposition 11.1]{bluher1} or \cite[§8]{endlGruppen}).

Every $[A]\in\PG(q)$ is contained in one of the following three types of conjugacy classes: \begin{enumerate}
    \item $[A]$ fixes a unique element in $\F_q\cup\{\infty\}$, i.e. $[A]\circ v=v$ for a unique $v\in \F_q\cup\{\infty\}$
    \item $[A]$ fixes two different elements in $\F_q\cup\{\infty\}$ under M{\"o}bius-transformation
    \item $[A]$ fixes $\lambda,\lambda^q\in\mathbb{F}_{q^2}\setminus\F_q$ under M{\"o}bius-transformation
\end{enumerate}
We then say that $[A]$ is of type $1,2$ or $3$ respectively. If $[A]$ is of type 1 then $\operatorname{ord}([A])=p$ and $p$ is the prime dividing $q$. Every element $[B]$ of type 2 has an order dividing $q-1$ and if $[C]$ is of type 3 then $\operatorname{ord}([C])|q+1$. So $\mu_{\PG(\F_q)}=q+1$ and we obtain \begin{corollary}\label{pgqfin}
Let $K=\F_{q^s}$ for $s>0$ and $F\in\mathcal{I}_{q^s}$ and $G=\PG(\F_q)$. If $F\neq x+1$ then $F^{Q_G}$ has at least $q^2-q$ irreducible factors and every such factor has degree at most $(q+1)\cdot\deg(F)$.
\end{corollary}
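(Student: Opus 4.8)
The plan is to obtain this statement as a direct specialization of Theorem~\ref{facfin}, applied with ambient field $\F_{q^s}$ and subgroup $G=\PG(\F_q)\le\PG(\F_{q^s})=\PG(K)$. Only two things must be supplied: a verification that the hypothesis ``$F^{Q_G}$ separable'' holds under the assumption $F\neq x+1$, and the explicit values of $|G|$ and $\mu_G$.

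First I would dispose of the separability hypothesis. Since $K=\F_{q^s}$ is perfect, the observation following Corollary~\ref{size} gives that $F^{Q_G}$ is separable if and only if the $G$-orbit of a root of $F^{Q_G}$ is regular; by Theorem~\ref{k=1} (and its contrapositive) this is in turn equivalent to $F\notin\mathcal{NC}^{Q_G}$. Earlier in this subsection we computed $\mathcal{NC}^{Q_G}=\{x+1\}$ for the quotient map~(\ref{pgqqg}), so the assumption $F\neq x+1$ is exactly the statement that $F^{Q_G}$ is separable. Thus Theorem~\ref{facfin} is applicable.

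Next I would compute the two group-theoretic quantities. From $|\GL(\F_q)|=(q^2-1)(q^2-q)=q(q-1)^2(q+1)$ and the center $Z=\F_q^{\ast}I_2$ of order $q-1$ we get $|G|=|\PG(\F_q)|=q(q-1)(q+1)=q^3-q$, while $\mu_G=q+1$ has just been recorded from the classification into the three types of cyclic subgroups. Substituting into Theorem~\ref{facfin}, the number of irreducible factors of $F^{Q_G}$ is at least
\begin{equation*}
  \frac{|G|}{\mu_G}=\frac{q(q-1)(q+1)}{q+1}=q^2-q,
\end{equation*}
and each factor has degree at most $\mu_G\cdot\deg(F)=(q+1)\deg(F)$, which is the assertion.

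There is no real obstacle: the corollary is a bookkeeping consequence of Theorem~\ref{facfin}. The only steps deserving attention are confirming that the perfect-field equivalence genuinely collapses the separability hypothesis to excluding the single polynomial $x+1$, and the elementary order count for $\PG(\F_q)$; both are immediate from results already established.
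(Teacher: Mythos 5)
Your proposal is correct and matches the paper's proof, which likewise derives the corollary directly from Theorem~\ref{facfin} with $|G|=q^3-q$ and $\mu_G=q+1$. The only difference is that you spell out the separability check (reducing it to $F\notin\mathcal{NC}^{Q_G}=\{x+1\}$ via the perfect-field equivalence), which the paper leaves implicit.
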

\begin{proof}
 Follows immediately from Theorem \ref{facfin} together with $\mu_{G}=q+1$ and $|G|=q^3-q$. 
\end{proof}
\section*{Acknowledgements}
    I want to thank Alev Topuzoğlu and Henning Stichtenoth for their helpful remarks and the advice they have given me. I am especially grateful to Henning Stichenoth helping me with some technicalities of section 2.1. and making me aware of the paper \cite{madan}. 
    
    I am also very grateful for all of the invaluable help my supervisor Gohar Kyureghyan has given me.  Without her, this paper would probably not exist.

 \printbibliography 
\end{document}